\documentclass[]{amsart}
\usepackage{amsthm,amsmath,amsfonts,amssymb,amscd,mathrsfs,graphics}
\usepackage{txfonts}
\usepackage{supertabular}
\usepackage{tikz, graphicx}
 \usepackage{multirow}
\usepackage[pdftex,
            pdfauthor={Francis},
            pdftitle={New Methods in FJRW Theory},
            pdfsubject={FJRW theory, Mirror symmetry},
            pdfkeywords={FJRW, Mirror symmetry}]{hyperref}
            
\usepackage{lscape}
\usepackage{url}

\makeatletter
\def\@biblabel#1{[#1]}
\makeatother

\newcommand{\defital}{\textit}
\newcommand{\ds}{\displaystyle}
\newcommand{\Z}{\mathbb Z}
\newcommand{\ZZ}{\mathbb Z}
\newcommand{\C}{\mathbb C}
\newcommand{\CC}{\mathbb C}
\newcommand{\QQ}{\mathbb Q}
\newcommand{\fjrw}[2]{ \left\lceil #1 \:; #2 \right\rfloor }

\newcommand{\zero}{\mathbf 0}

\newcommand{\one}{\mb{1}}

\newcommand{\mb}{\mathbf}

\newcommand{\sH}{\mathscr{H}}
\newcommand{\sA}{\mathscr{A}}
\newcommand{\sI}{\mathscr{I}}
\newcommand{\sC}{\mathscr{C}}
\newcommand{\uC}{\mathfrak{C}}
\newcommand{\sO}{\mathscr{O}}

\newcommand{\sW}{\mathscr{W}}
\newcommand{\sQ}{\mathscr{Q}}
\newcommand{\bMgn}[2]{\overline{\mathscr{M}}_{#1,#2}}

\newcommand{\sL}{\mathscr{L}}



\newtheorem{theorem}{Theorem}
\newtheorem{thm}{Theorem}[section]
\newtheorem{lem}[thm]{Lemma}

\newtheorem{property}{\propertyautorefname}

\newtheorem{cor}[thm]{Corollary}
\newtheorem{conj}[thm]{Conjecture}

\theoremstyle{definition}
\newtheorem{remark}[thm]{Remark}

\newtheorem{defn}[thm]{Definition}
\newtheorem{ax}[thm]{Axiom}
\newtheorem{example}[thm]{Example}
\theoremstyle{remark}
\newtheorem{notat}{Notation} 
\DeclareMathOperator{\Sing}{Sing}
\DeclareMathOperator{\ch}{ch}
\providecommand*{\propertyautorefname}{Property}


\setlength{\marginparwidth}{0.8in}
\let\oldmarginpar\marginpar
\renewcommand\marginpar[1]{\oldmarginpar[\raggedleft\footnotesize #1]%
{\raggedright\footnotesize #1}}

\begin{document}

\date{\today}

\title[Computational Techniques in FJRW Theory]{Computational Techniques in FJRW Theory\\ with Applications to   Landau-Ginzburg Mirror Symmetry}
\author{Amanda Francis}
\address{Department of Mathematics, Brigham Young University, Provo, UT 84602, USA}
\email{amanda@mathematics.byu.edu}


\begin{abstract}
The Landau-Ginzburg A-model, given by FJRW theory, defines a cohomological field theory, but in most examples is very difficult to compute, especially when the symmetry group is not maximal.  We give some methods for finding the A-model structure.  In many cases our methods completely determine the previously unknown A-model Frobenius manifold structure.  In the case where these Frobenius manifolds are semisimple, this can be shown to determine the structure of the higher genus potential as well.  We compute the Frobenius manifold structure for 27 of the previously unknown unimodal and bimodal singularities and corresponding groups, including 13 cases using a non-maximal symmetry group.

\end{abstract}

\maketitle

\setcounter{tocdepth}{1}
\tableofcontents

\section{Introduction}

Mirror symmetry is a phenomenon from physics that has inspired a lot of interesting mathematics. In the Landau-Ginzburg setting, we have two constructions, the A- and B-models,  each of which depends on a choice of a polynomial with a group of symmetries.  Both models yield Frobenius manifolds.  

The A-model arising from  FJRW theory produces a full cohomological field theory. From the cohomological field theory we can construct correlators and assemble these into a potential function.  This potential function completely determines the Frobenius manifold and determines much of the structure of the cohomological field theory as well.  Although some of these correlators have been computed in special cases, in many cases their computation is quite difficult, especially in the case that the group of symmetries is not maximal (mirror symmetry predicts that these should correspond to an orbifolded B-model).
We give some computational methods for computing correlators, including a formula for concave genus-zero, four-point correlators, and show how to extend these results to find other correlator values.  In many cases our methods give enough information to compute the A-model  Frobenius manifold.  
We give the FJRW Frobenius manifold structure for 27 pairs of polynomials and groups, 13 of which are constructed using a non-maximal symmetry group.

\begin{conj}[The Landau Ginzburg Mirror Symmetry Conjecture] \label{LG_conj}
There exist A- and B-model structures, each constructed from a polynomial $W$ and an associated group, $G$ (see discussion following Definition \ref{G}), such that the A-model for $W$ and $G$ is isomorphic to the B-model for $W^T$ and $G^T$, where $W^T$ and $G^T$ are  dual to the original polynomial and group.
\end{conj}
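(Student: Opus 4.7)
The statement is a conjecture, so rather than sketching a general proof I will describe the case-by-case verification strategy that underlies the paper. The plan is to establish the isomorphism for each chosen pair $(W,G)$ by computing the Frobenius manifold on each side independently and then exhibiting an explicit identification of the underlying graded vector spaces that intertwines the pairing and all correlators / structure constants.

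First, on the A-side I would assemble the FJRW state space as the $G$-invariant part of a direct sum of local algebras of $W$ restricted to the fixed loci of elements of $G$, and read off the bi-grading together with the Frobenius pairing directly from the definition. Next I would compute as many three- and four-point genus-zero correlators as possible from first principles (using concavity, dimension/selection rules, index-zero contributions, and the new closed-form concave four-point formula promised earlier in the introduction). Using the WDVV equations and the reconstruction theorems that express higher $n$-point correlators in terms of the basic ones, I would then attempt to propagate this initial data to determine the full genus-zero potential, and hence the Frobenius manifold structure.

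Second, on the B-side I would compute the orbifolded Milnor/Jacobian ring attached to $(W^T,G^T)$, equipped with its residue pairing, and identify it with a Frobenius algebra (together with the prescribed deformation in the bimodal cases). Once both structures are in hand for a given pair, the verification reduces to producing a bi-grading preserving linear map between the state spaces that matches pairings and structure constants; in practice this is done by aligning a natural monomial-style basis on the B-side with the sectors on the A-side dictated by the mirror group construction, and checking the equalities of structure constants pair by pair.

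The main obstacle, and the reason the conjecture is still open in general, is that for non-maximal $G$ the basic correlators needed to seed the reconstruction are genuinely hard: many are not concave, the dimension axioms leave them undetermined, and WDVV alone is insufficient. The success of the plan for the 27 pairs announced in the abstract therefore hinges on (i) the concave four-point formula, which gives enough nonzero seeds, and (ii) a careful bookkeeping to show that these seeds, together with WDVV and the reconstruction lemma, pin down every further correlator relevant to the Frobenius manifold. I would expect that for the 13 non-maximal cases the combinatorics of determining which correlators are forced to vanish (as opposed to merely unknown) is the most delicate step, and that semisimplicity of the resulting Frobenius manifold will then promote the genus-zero agreement to a higher-genus statement via Givental--Teleman reconstruction.
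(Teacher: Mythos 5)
The statement you were asked about is a conjecture, and the paper offers no proof of it; your decision to describe a verification strategy rather than a proof is therefore the right instinct, and your account of the A-side machinery (state space from $G$-invariants of local algebras on fixed loci, selection rules, the concave four-point formula, WDVV and the Reconstruction Lemma to propagate to the full genus-zero potential, with the bound on the number of marked points needed coming from the central charge and the maximal primitive degree) matches what the paper actually does in Section 3 and Lemma \ref{all_corrs_lem}.

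However, your second step --- computing the B-model for $(W^T, G^T)$ and exhibiting an explicit isomorphism --- cannot currently be carried out in precisely the cases that are the point of this paper. When $G$ is non-maximal, the dual group $G^T$ is nontrivial, so the B-side is an \emph{orbifolded} B-model, and the paper states explicitly that ``for the orbifolded B-model the Frobenius manifold structure is still unknown.'' There is no residue-pairing Jacobian ring construction available to compare against; the orbifolded B-model Frobenius manifold has not been defined at the level of generality needed. Consequently the paper's A-model computations are presented as \emph{predictions} for what the orbifolded B-model should be, not as one half of a completed verification. Your proposal treats the B-side as computable input, which overstates the state of the art; the genuine obstruction to even testing the conjecture in the 13 non-maximal cases lies on the B-side, not in the correlator bookkeeping on the A-side. (Your closing remark about semisimplicity and Givental--Teleman promoting genus-zero agreement to higher genus is consistent with the paper's abstract, but it is likewise contingent on first having a B-side object to agree with.)
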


Although physics predicted its existence, a mathematical construction of the A-model was not known until 2007 when Fan, Jarvis, and Ruan, following the ideas of Witten, proposed a cohomological field theory $\sA_{W,G}$ to satisfy Conjecture \ref{LG_conj} \cite{FJR, FJR1, FJR-ip-WEVFC}.

A basis for the vector space of the A-model consists of pairs of a monomial and a group element.  When the group element acts nontrivially on each variable of $W$, we call this a \textit{narrow} element, otherwise, it is called \textit{broad}. 
The structure in the A-model is determined by certain structure constants called \defital{genus-$g$, $k$-point correlators}, which come from the cohomology of the moduli space $\bMgn{g}{k}$ of genus-$g$ curves with $k$ marked points.  The Frobenius algebra structure is given by the genus-zero, three-point correlators, the Frobenius manifold structure by the genus-zero, $k$-point correlators for $k \geq 3$, and the higher genus structure by the genus-$g$, $k$-point correlators for all nonnegative integers $g$ and $k$ such that $2g-2 +k > 0$.  These correlators are defined as integrals of certain cohomology classes  over $\bMgn{g}{k}$. 
Finding the values of these correlators is a difficult PDE problem, which has not been solved in general.  
They are difficult to compute, especially when they contain broad elements, so in many cases we still do not know how to compute even the A-model Frobenius algebra structure. 
%
 In most cases we do not know how to compute the Frobenius manifold or higher genus structures. In this paper we make progress along these lines.

In 2010, Krawitz \cite{Kr} proved Conjecture \ref{LG_conj} at the level of Frobenius algebra for almost every invertible polynomial $W$ and $G = G^{max}_W$, the maximal symmetry group.  It is more difficult to determine the structure in the A-model when $G \neq G^{max}_W$, because of the introduction of broad elements. 
In 2011 Johnson, Jarvis, Francis and Suggs \cite{FJJS} proved the conjecture at the Frobenius algebra level for any pair $(W,G)$ of invertible polynomial and admissible symmetry group with the following property:

\begin{property}\label{propstar}
Let $W$ be an invertible, nondegenerate, quasihomogeneous polynomial and let $G$ be an admissible symmetry group for $W$.  We say the pair $(W,G)$ has Property $\ref{propstar}$ if 
\begin{enumerate}
\item $W$ can be decomposed as $W = \sum_{i =1}^M W_i$ where the $W_i$ are themselves invertible polynomials having no variables in common with any other $W_j$.
\item For any element $g$ of $G$, where some monomial $\fjrw{m}{g}$ is an element of $\sH_{W,G}$, and for each $i \in \{1, \ldots, M\}$, $g$ fixes either all of the variables in $W_i$ or none of them.
\end{enumerate}
\end{property}

Even for $W$ and $G$ satisfying Property $\ref{propstar}$, where we know the isomorphism class of the the Frobenius algebra, we usually still cannot compute the entire Frobenius manifold (the genus-zero correlators), nor the higher genus potential for the cohomological field theory.
In fact, computing the full structure of either model is difficult, and has only been done in a few cases.  The easiest examples of singularities are the so-called ``simple'' or $ADE$ singularities.  Fan, Jarvis, and Ruan computed the full A-model structure for these in \cite{FJR}.  The next examples come from representatives of the ``Elliptic'' singularities $P_8$, $X_9$, $J_{10}$, and their transpose singularities.  Shen and Krawitz \cite{shenkrawitz} calculated the entire A-model for certain polynomial representatives of $P_8$, $X_9^T$ and $J_{10}^T$, with maximal symmetry group.  

In 2013, Gu{\'e}r{\'e} \cite{Guere} provided an explicit formula for the cohomology classes $\Lambda$ when $W$ is an invertible \defital{chain} type polynomial, and $G$ is the maximal symmetry group for $W$.  

In 2008, Krawitz, Priddis, Acosta, Bergin, and Rathnakumara \cite{KPABR} worked out the Frobenius algebra structure for quasihomogeneous polynomial representatives of various singularities in Arnol'd's list of unimodal and bimodal singularities \cite{Ar}.  We expand on their results, giving the Frobenius manifold structure for the polynomials on Arnol'd's list, as well as their transpose polynomials.  In many cases there is more than one admissible symmetry group for a polynomial.  

In this paper we consider all possible symmetry groups for each polynomial.  Previously, almost no computations have been done in the case where the symmetry group is not maximal, because of the introduction of broad elements. However, this case is particularly interesting because the corresponding transpose symmetry group, used for B side computations, will not be trivial. Thus mirror symmetry predicts these cases will correspond to an orbifolded B-model.
Much is still unknown about these orbifolded B-models. Knowing the corresponding A-model structures, though, give us a very concrete prediction for what these orbifolded B-models should be.

Much of the work that has been done so far in this area has used a list of properties (``axioms'') of FJRW theory originally proved in \cite{FJR} to calculate the values of correlators in certain cases.  Primarily, these axioms have been used to compute genus-zero three-point correlators, but many of them can be used to find information about higher genus and higher point correlators.  One such axiom is the concavity axiom.  This axiom gives a formula for some of the cohomology classes $\Lambda$ of the cohomological field theory in terms of the top Chern class of a sum of derived pushforward sheaves.  

We use a result of Chiodo \cite{Chi} to compute the Chern characters of the individual sheaves in terms of some cohomology classes in the moduli stack of $W$-curves $\sW_{g,k}$ in genus zero.  Then we use various properties of Chern classes to compute the top Chern class of the sum of the sheaves given the Chern character of each individual sheaf.   Much is known about the cohomology $\bMgn{g}{k}$, and not a lot about $\sW_{g,k}$, so we push down the cohomology classes in $\sW_{g,k}$ to certain tautological classes $\psi_i$, $\kappa_a$ and $\Delta_I$ over $\bMgn{0}{k}$.  In this way, we provide a method for expressing $\Lambda$ as a polynomial in the tautological classes of $\bMgn{g}{k}$.  Correlators can then be computed by  integrating $\Lambda$ over $\bMgn{g}{k}$, which is equivalent to calculating certain intersection numbers.  

Algorithms for computing these numbers are well established, for example in \cite{FaMgnF, ACComb}.  Code in various platforms (for example \cite{FaMgnF} in Maple and \cite{Jo} in Sage) has been written which computes the intersection numbers we need.  I wrote code in Sage which performs each of the steps mentioned above to find  the top Chern class of the sum of the derived pushforward sheaves, and then uses Johnson's intersection code \cite{Jo} to find intersection numbers. This allows us to compute certain correlator values which were previously unknown.  In particular, in Lemma \ref{4pt_concave}, we restate an explicit formula  found in \cite{FJR} for computing any concave genus-zero four-point correlators, with a proof not previously given.  We also describe how to compute higher point correlators. To do this, we use a strengthened version of the Reconstruction Lemma of \cite{FJR} to find values of non-concave correlators, with an aim to describe the full Frobenius manifold structure of many  pairs  $(W,G)$ of singularities and groups. In many cases, these new methods allow us to compute Frobenius manifold structures for certain singularities and groups which were previously unknown.

In 2014, Li, Li, Saito, and Shen \cite{LLSS} computed the entire A-model structure for the 14 polynomial representatives of exceptional unimodal singularities listed in \cite{Ar}, and their transpose polynomial representatives, with \defital{maximal symmetry group}. In fact, for these polynomials and groups, they proved Conjecture \ref{LG_conj}.  But there are also three non-maximal symmetry groups that appear in this family of polynomials:  the minimal symmetry groups for $Z_{13}^T$, $Q_{12}$, and $U_{12}$.  The A-model for the first of these is computed in this paper (see Section \ref{sec_done}), for the second, the A-model splits into the tensor product of two known A-models (see Section \ref{sec_splits}).  We do not currently know a way to compute the A-model structure for $U_{12}$ with minimal symmetry group (see Section \ref{sec_nogo}).

There are 51 pairs of invertible polynomials and admissible symmetry groups corresponding to those listed in  \cite{Ar}, whose A-model structure is still unknown.  Of these, 16 pairs have FJRW theories which split into the tensor products of theories previously computed (see Section \ref{sec_splits}).  For 27 of the remaining pairs, we are able to use our computational methods to find the full Frobenius manifold structure of the corresponding A-model (see Section \ref{sec_done}), including 13 examples with non-maximal symmetry groups.  There are 8 pairs whose theories we still cannot compute using any known methods (see Section \ref{sec_nogo}).

\subsection{Acknowledgements}
I would like to thank my advisor Tyler Jarvis for many helpful discussions that made this work possible. Thank also to those who have worked on the SAGE code which I used to do these computations, including Drew Johnson, Scott Mancuso and Rachel Webb.  I would also like to thank Rachel Webb for pointing out some errors in earlier versions of this paper. 
\section{Background}\label{review}
We begin by reviewing key facts about the construction of the A-models. This will require the choice of an admissible polynomial and an associated symmetry group.

\subsection{Admissible Polynomials and Symmetry Groups}
A polynomial $W \in \mathbb{C}[x_1, \ldots, x_n]$, where $W = \sum_i c_i\prod_{j = 1}^n x_j^{a_{i,j}}$ 
is called \defital{quasihomogeneous} if there exist positive rational numbers $q_j$ (called weights) for each variable $x_j$ such that each monomial of $W$ has weighted degree one.  That is, for every $i$ where $c_i \neq 0$, 
$
\sum_j q_j a_{i,j} = 1.
$

A polynomial $W \in \mathbb{C}[x_1, \ldots, x_n]$ is called \defital{nondegenerate} if it has an isolated singularity at the origin.

Any polynomial which is both quasihomogeneous and nondegenerate will be considered \defital{admissible} for our purposes.  
We say that an admissible polynomial is \defital{invertible} if it has the same number of variables and monomials. 
In this paper, we focus on computations involving only {invertible} polynomials.  

The \defital{central charge} $\hat{c}$ of an admissible polynomial $W$ is given by
$
\hat{c} = \sum_j (1 - 2q_j).
$

Next we define the maximal diagonal symmetry group.

\begin{defn}\label{G}
Let $W$ be an admissible polynomial. The \defital{maximal diagonal symmetry group} $G_W^{max}$ is the group of elements of the form $g = (g_1, \ldots, g_n) \in \left(\mathbb{Q}/\ZZ\right)^n$ such that
\[
W(e^{2\pi i g_1}x_1,e^{2\pi i g_2}x_2,\ldots,e^{2\pi i g_n}x_n) = W(x_1,x_2,\ldots,x_n).
\] 
\end{defn}

\begin{remark}\label{J_elem}
If $q_1, q_2, \ldots, q_n$ are the weights of $W$, then the element $J = (q_1, \ldots, q_n)$ is an element of $G_W^{max}$.  
\end{remark}

FJRW theory requires not only a quasihomogeneous, nondegenerate polynomial $W$, but also the choice of a subgroup $G$ of $G_W^{max}$ which contains the element $J$.  Such a group is called \defital{admissible}.  We denote $\left\langle J\right> = G^{min}_W$, and any subgroup between $G^{min}$ and $G^{max}$ is admissible.

\begin{example}\label{Example_WG}
Consider the polynomial $W=W_{1,0} = x^4 + y^6$.  In this case that there are two admissible symmetry groups:
\[
\left\langle J\right\rangle= \langle(1/4,1/6 )\rangle, 
\text{ and } 
G_W^{max} = \langle (1/4,0), (0,1/6) \rangle.
 \]
 We shall use $W = W_{1,0}$ and $G = \langle J \rangle$
 for the rest of the examples in this section.

\end{example}

\subsection{Vector Space Construction}
We now briefly review the construction of the A-model state space, as a graded vector space.

We use the notation $I_g = \{i \hspace{.1cm} |\hspace{.1cm}  g \cdot x_i = x_i\}$ to denote the set of indices of those variables fixed by an element $g$, and  $Fix(g)$ to denote the subspace of $\C^n$ which is fixed by $g$,
$
Fix(g) =\{ (a_1, \ldots, a_n) | \text{ such that } a_i = 0 \text{ whenever } g \cdot x_i \neq x_i \}
$.
The notation $W_g$ will denote the polynomial $W$ restricted to $Fix(g)$.  

For the polynomial $W$ with symmetry group $G$, the state space $\sH_{W,G}$ is defined in terms of Lefschetz thimbles
and is equipped with a natural pairing, $\langle \bullet, \bullet \rangle : \sH_{W,G} \times \sH_{W,G} \rightarrow \C$ which we will not define here.
\begin{defn}
Let $\sH_{g,G}$ be the $G$-invariants of the middle-dimensional relative cohomology
\[
\sH_{g,G} = H^{mid} (Fix(g),(W)_g^{-1}( \infty))^G,
\]
where $W_g^{-1}(\infty)$ is a generic smooth fiber of the restriction of $W$ to $Fix(g)$.   The state space is given by
\[
\sH_{W,G} = \left(\bigoplus_{g \in G} \sH_{g,G}\right).
\]
\end{defn}

Recall the \defital{Milnor Ring} $\sQ_W$ of  the polynomial $W$ is $\ds{\frac{\C[x_1 \ldots, x_n]}{
\left\langle\frac{\partial W}{\partial x_1}, \ldots, \frac{\partial W}{\partial x_n}\right\rangle
}
}$. It has a natural residue pairing.

\begin{lem}[Wall]\label{Wall}
Let $\omega = dx_1 \wedge \ldots \wedge dx_n$, then 
\[
\sH_{\zero,G}  = H^{mid} (\CC^n,(W)^{-1}( \infty)) \cong  \frac{ \Omega^n}{dW \wedge\Omega^{n-1}} \cong \sQ_W \omega.
\]
are isomorphic as $G_W$-spaces, and this isomorphism respects the pairing on both. 
\end{lem}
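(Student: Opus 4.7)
The plan is to break the chain of isomorphisms into two independent links and handle them with different tools, and then address the $G_W$-equivariance and pairing compatibility as a final step.

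First I would establish the right-hand isomorphism $\Omega^n/(dW\wedge\Omega^{n-1})\cong \sQ_W\,\omega$ purely algebraically. Any $(n-1)$-form can be written as $\eta=\sum_{i}(-1)^{i-1}f_i\,dx_1\wedge\cdots\widehat{dx_i}\cdots\wedge dx_n$, and a direct calculation gives
\[
dW\wedge\eta \;=\;\Bigl(\sum_i f_i\tfrac{\partial W}{\partial x_i}\Bigr)\omega,
\]
so the subspace $dW\wedge\Omega^{n-1}\subset\Omega^n$ corresponds to the Jacobian ideal times $\omega$. Quotienting gives $\sQ_W\omega$ on the nose.

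Next I would handle the harder topological identification $H^{mid}(\CC^n,W^{-1}(\infty))\cong \Omega^n/(dW\wedge\Omega^{n-1})$. The plan is to pass through the twisted (oscillatory) de Rham complex $(\Omega^\bullet,d+dW\wedge)$. Using the relative de Rham theorem for an isolated singularity on $\CC^n$, the relative cohomology in middle degree is computed by the top cokernel of this complex, namely $\Omega^n/(d+dW\wedge)(\Omega^{n-1})$; all other cohomological degrees vanish by an $\mu$-dimensional computation matching the Milnor number. Then, because $W$ is quasihomogeneous and admits an Euler vector field $E$ satisfying $E(W)=W$, any closed form $d\alpha$ representing a class mod $dW\wedge\Omega^{n-1}$ can be rewritten, via contraction with $E$ and the Cartan formula, as an element of $dW\wedge\Omega^{n-1}$. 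This Brieskorn/Saito-type argument shows $\Omega^n/(d+dW\wedge)(\Omega^{n-1})\cong\Omega^n/(dW\wedge\Omega^{n-1})$, completing the link.

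Finally I would verify the two remaining structural compatibilities. For $G_W$-equivariance, every piece of the construction (the polynomial $W$, the ideal $\langle\partial_i W\rangle$, the form $\omega$, and the fibration $W:\CC^n\to\CC$) is natural with respect to diagonal $G_W$-actions, so both isomorphisms are automatically $G_W$-equivariant; taking $G$-invariants then gives the statement for $\sH_{\zero,G}$. For the pairing, on the topological side we use the intersection pairing on the middle cohomology of the Milnor fiber, while on the Milnor-ring side we have the Grothendieck residue pairing $\langle f,g\rangle = \mathrm{Res}\bigl(\tfrac{fg\,\omega}{\partial_1 W\cdots\partial_n W}\bigr)$; the agreement of these two pairings under Wall's isomorphism is a classical consequence of the local duality theorem applied to the map $W$, which I would invoke rather than reprove.

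The main obstacle is the middle step: establishing $H^{mid}(\CC^n,W^{-1}(\infty))\cong\Omega^n/(dW\wedge\Omega^{n-1})$ rigorously, since it packages together the relative de Rham theorem, vanishing of cohomology outside the middle degree, and the Brieskorn-Saito collapse of the twisted differential. In an FJRW background section the cleanest route is probably to cite Wall's original argument (or Steenbrink's treatment of the Brieskorn lattice) and focus the exposition on the algebraic identification with $\sQ_W\omega$, which is what is actually used in later computations.
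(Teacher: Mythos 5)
The paper does not actually prove this lemma: it is stated as a classical result attributed to Wall and used as a black box, so there is no in-paper argument to compare against. Your sketch is a correct reconstruction of the standard proof. The algebraic link is exactly right: writing $\eta=\sum_i(-1)^{i-1}f_i\,dx_1\wedge\cdots\widehat{dx_i}\cdots\wedge dx_n$ gives $dW\wedge\eta=\bigl(\sum_i f_i\,\partial W/\partial x_i\bigr)\omega$, so $dW\wedge\Omega^{n-1}=\Jac(W)\cdot\omega$ and the quotient is $\sQ_W\omega$ on the nose. The topological link via the twisted de Rham complex, with vanishing outside top degree coming from exactness of the Koszul complex on the regular sequence $\partial_1W,\ldots,\partial_nW$ (this is the precise content behind your ``$\mu$-dimensional computation''), and the Euler-vector-field collapse of $(d+dW\wedge)\Omega^{n-1}$ onto $dW\wedge\Omega^{n-1}$ in the quasihomogeneous case, is the standard Brieskorn--Saito route. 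Two small points worth making explicit if you were to write this out: the relative cohomology $H^{mid}(\CC^n,W^{-1}(\infty))$ should first be identified with $\widetilde{H}^{n-1}$ of the Milnor fiber using contractibility of $\CC^n$ before invoking the de Rham comparison; and the $G_W$-action on $\omega$ is by the determinant character $e^{2\pi i\sum_j g_j}$ rather than trivial, which is precisely why the statement is an isomorphism with $\sQ_W\omega$ and not with $\sQ_W$ --- the equivariance claim is correct but this twist should be recorded. Your closing suggestion to cite Wall (or Steenbrink) rather than reprove the middle step is in fact exactly what the paper does.
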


The isomorphism in Lemma \ref{Wall} certainly will hold for the restricted polynomials $W_g$ as well.  This gives us the useful fact
\begin{equation}\label{state_space}
\sH_{W,G} = \bigoplus_{g \in G} \sH_{g,G}  \cong \bigoplus_{g \in G} \left(\sQ_g \omega_g \right)^G,
\end{equation}
where $\omega_g = dx_{i_1} \wedge \ldots \wedge dx_{i_s}$ for $i_j \in I_g$.

\begin{notat}\label{FJRWNotation}
An element of $\mathscr{H}_{W,G}$ is a linear combination of basis elements. We denote these basis elements by $\fjrw{m}{g}$, where $m$ is a monomial in $\C[x_{i_1}, \ldots, x_{i_r}]$ and $\{i_1, \ldots, i_r\} = I_g$.  We say that $\fjrw{m}{g}$ is \defital{narrow} if $ I_g = \emptyset$ , and \defital{broad} otherwise.
\end{notat}

The \defital{complex degree} $\deg_{\C}$ of a basis element $\alpha = \fjrw{m}{(g^1, \ldots, g^n)}$ is given by 
\[
\deg_{\C} \alpha = \frac{1}{2}N_i + \sum_{j = 1}^n (g^j - q_j), 
\]
where $N_g$ is the number of variables in $Fix(g_i)$.

By fixing an order for the basis, we can create a matrix which contains all the pairing information.
This \defital{pairing matrix} $\eta$ is given by 
\[
\eta = \left[\left\langle \alpha_i,\alpha_j\right\rangle\right]
\]
where $\{\alpha_i\}$ is a basis for $\sH_{W,G}$.

\subsection{The Moduli Space and basic properties}
The FJRW cohomological field theory arises from the construction of certain cohomology classes on a finite cover of the moduli space of stable curves, the moduli space of $W$-orbicurves. 

\subsubsection{Moduli Spaces of Curves}

The \defital{Moduli Space of stable curves of genus $g$ over $\CC$ with $k$ marked points $\bMgn{g}{k}$} can be thought of as the set of equivalence classes of   
(possibly nodal) Riemann surfaces $C$ with $k$ marked points, $p_1, \ldots p_k$, where $p_i \neq p_j$ if $i \neq j$.  We require an additional stability condition,  that the automorphism group of any such curve be finite.  This means that $2g-2+k >0$ for each irreducible component of $C$, where $k$ includes the nodal points.  

We denote the universal curve over $\bMgn{g}{k}$ by $\uC \xrightarrow{\pi} \bMgn{g}{k}$.

The \defital{dual graph} of a curve in $\bMgn{g}{k}$ is a graph with a node representing  each irreducible component, an edge for each nodal point, and a half edge for each mark. Componenents with genus equal to zero are denoted with a filled-in dot while higher-genus components are denoted by a vertex labeled  with the genus.

\begin{example}
A nodal curve in $\bMgn{1}{3}$ and its dual graph are shown below.
\begin{center}
\begin{tikzpicture}

\draw (-5,0) ellipse (25pt and 20pt);
\node at (-4.1,0) {$\bullet$};
\draw (-3.4,0) ellipse (20pt and 20pt);
\node at (-2.7,0) {$\bullet$};
\draw (-2,0) ellipse (20pt and 20pt);
\draw (-5.3,0) arc (-160: -20: 10pt);
\draw (-4.75,-.1) arc (10: 170: 6pt);
\node[scale = .75] at (-2,.3) {$3$};
\node at (-1.9,-.4) {$\cdot$};
\node[scale = .75] at (-2.1,-.4) {$2$};
\node at (-1.8,.3) {$\cdot$};
\node[scale = .75] at (-3.5,.3) {$1$};
\node at (-3.5,0) {$\cdot$};

\node[scale = .75] at (.5,-.3) {$1$};
\draw (.5,-.3) circle (5 pt);
\draw (.6,-.2) --(1.25,.25);
\node at (1.25,0.25) {$\bullet$};
\draw (1.25,.65)node[anchor = south, scale = .75]{$1 $}--(1.25,.25);
\draw (1.9,-.2) --(1.25,.25);
\node at (1.9,-.2) {$\bullet$};
\draw (1.9,-.2) -- (2.4,.25) node[anchor = west, scale = .75] {$3$};
\draw (2.4,-.65) node[anchor = west, scale = .75] {$2$}--(1.9,-.2);
\end{tikzpicture}
\end{center}

\end{example}

For each pair of non-negative integers $g$ and $k$, with $2g - 2 + k > 0$, the FJRW cohomological field theory produces for each $k$-tuple $(\alpha_1, \ldots, \alpha_k) \in \sH_{W,G}^{\otimes k}$ a cohomology class $\Lambda^W_{g,k}(\alpha_1,\alpha_2,...,\alpha_k) \in H^*(\bMgn{g}{k})$.  The definition of this class can be found in \cite{FJR1}.

A genus-$g$, $k$-point \defital{correlator} with insertions $\alpha_1, \ldots, \alpha_k \in \sH_{W,G}$ is defined by the integral
\[
\left\langle \alpha_1, \ldots, \alpha_k\right>_{g,n} = \int_{\bMgn{g}{k}} \Lambda_{g,k}\left(\alpha_1, \ldots, \alpha_k\right).
\]

Finding the values of these correlators is a difficult PDE problem, which has not been solved in general. In spite of this, we can calculate enough of them to determine the full Frobenius manifold structure in many cases.
Fan, Jarvis and Ruan \cite{FJR} provide some axioms the $\Lambda$ classes must satisfy, which, in some cases,  allow us to determine their values. For example, these axioms give the following selection rules for non-vanishing correlators.

\begin{ax}[FJR]
\label{sel_rules}

Here we let  $\alpha_i = \fjrw{m_i}{g_i}$, with $g_i = (g_i^1, \ldots, g_i^n)$. 
\begin{enumerate}
\item
$\left\langle \alpha_1, \ldots, \alpha_k\right>_{g,k} = \left\langle \alpha_{\sigma(1)}, \ldots, \alpha_{\sigma(k)}\right>_{g,k}$. For any permutation $\sigma \in S_k$.
\item  
$ \left\langle \alpha_1, \ldots, \alpha_k\right>_{0,k} = 0$ unless
 $\sum_{i = 1}^k\deg_{\C}\alpha_i = \hat{c} + k - 3$.
\item $\left\langle \alpha_1, \ldots, \alpha_k\right>_{0,k} = 0$ unless $q_j(k-2) - \sum_{i = 1}^k g_i^j \in \Z$ for each $j = 1, \ldots n$
\end{enumerate}
\end{ax}

Notice that the last selection rule above gives the following lemma.

\begin{lem}\label{lem_lastelement}
The correlator $ \left\langle \alpha_1, \ldots, \alpha_k\right>_{0,k} = 0$ unless
 $g_k = (k-2)\cdot J - \sum_{i=1}^{k-1} g_i$, 
 where $J \in G^{max}_W$ is the element whose entries are the quasihomogeneous weights of $W$.
 \end{lem}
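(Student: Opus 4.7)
The plan is to derive the stated identity directly from the third selection rule in Axiom \ref{sel_rules}. That rule asserts that for a nonvanishing genus-zero correlator, the congruence
\[
q_j(k-2) - \sum_{i=1}^{k} g_i^j \in \Z
\]
must hold for every $j \in \{1, \ldots, n\}$. Isolating the $k$-th term gives
\[
g_k^j \equiv q_j(k-2) - \sum_{i=1}^{k-1} g_i^j \pmod{\Z}
\]
for each $j$.

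Next I would package these $n$ coordinatewise congruences into a single statement in $G^{max}_W \subseteq (\Q/\Z)^n$. By Remark \ref{J_elem}, the element $J = (q_1, \ldots, q_n)$ lies in $G^{max}_W$, so $(k-2)\cdot J = ((k-2)q_1, \ldots, (k-2)q_n)$. Since every $g_i$ belongs to $G \leq G^{max}_W \subseteq (\Q/\Z)^n$, congruence modulo $\Z$ in each coordinate is equality in $(\Q/\Z)^n$. Therefore the coordinate relations combine to
\[
g_k = (k-2)\cdot J - \sum_{i=1}^{k-1} g_i
\]
as elements of $(\Q/\Z)^n$, which is exactly the claim.

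There is essentially no obstacle: the lemma is a direct reformulation of the third selection rule in terms of the group element $J$, and the only subtlety to flag is that the equation is taking place in $(\Q/\Z)^n$ rather than in $\Q^n$, so the "$\pmod \Z$" in each coordinate disappears automatically. No additional machinery beyond Axiom \ref{sel_rules} and Remark \ref{J_elem} is required.
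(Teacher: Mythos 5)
Your proof is correct and follows exactly the route the paper intends: the paper simply remarks that the lemma is an immediate consequence of the third selection rule in Axiom \ref{sel_rules}, and your argument spells out precisely that observation, with the right care about the equality holding in $(\QQ/\ZZ)^n$. No discrepancy with the paper's reasoning.
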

The following splitting axiom will also be useful for us.
\begin{ax}[FJR]
\label{split_axiom}
 if $W_1 \in \CC[x_1, \ldots, x_r]$ and $W_2 \in \CC[x_{r+1}, \ldots, x_n]$ are two admissible polynomials with symmetry groups $G_1$, and $G_2$, respectively, then 

\[
\sH_{W_1+W_2, G_1 \oplus G_2} \cong \sH_{W_1, G_1} \otimes \sH_{W_2, G_2}
\]

where the $\Lambda$ classes are related by 
\[
\Lambda^{W_1 + W_2}(\alpha_1 \otimes \beta_1, \ldots, \alpha_k \otimes \beta_k) = \Lambda^{W_1 }(\alpha_1 , \ldots, \alpha_k ) \otimes 
\Lambda^{ W_2}(\beta_1, \ldots, \beta_k).
\]
\end{ax}
We omit the rest of the axioms (except the Concavity Axiom, which we will discuss later), but refer the reader to \cite{FJR} for the axioms, and \cite{KPABR} for a detailed explanation of how to use them to find genus-zero, three-point correlator values. 

\subsubsection{B-model Structure}

For the unorbifolded B-model the Frobenius manifold is given by the Saito Frobenius manifold for a particular choice of primitive form (see \cite{Saito}).  For the orbifolded B-model the Frobenius manifold structure is still unknown.

\section{Computational Methods}\label{concavity}
Here we give results for computing concave genus-zero correlators and discuss how to use the reconstruction lemma to find values of other correlators.
\subsection{Using the Concavity Axiom}\label{concavity_section}
We give a formula for $\Lambda$ as a polynomial in the tautological classes $\psi_i$, $\kappa_a$, and $\Delta_I$ in $H^*(\bMgn{g}{k})$. 
We then give a  formula for computing concave genus-zero four-point correlators.

\begin{ax}[FJR]
Suppose that all $\alpha_i$ are narrow insertions of the form $\fjrw{1}{g}$  (See Notation \ref{FJRWNotation}).  If $ \pi_* \bigoplus_{i=1}^n \sL_i = 0$, then the cohomology class $\Lambda_{g,k}^W(\alpha_1, \ldots, \alpha_k) $ can be given in terms of the top Chern class of the derived pushforward sheaf $R^1 \pi_* \bigoplus_{i=1}^n \sL_i$:
\begin{equation}\label{concave_lambda}
\Lambda_{g,k}^W(\alpha_1, \ldots, \alpha_k) = \frac{|G|^g}{\deg (st)} PD st_* \left( 
PD^{-1}\left((-1)^Dc_D\left(R^1 \pi_* \bigoplus_{i=1}^n \sL_i\right)\right)
\right)
\end{equation}
\end{ax}

Recall that integration of top dimensional cohomology classes is the same as pushing them forward to a point, or computing intersection numbers.

In this section we express $(-1)^D c_D R^1\pi_{*}(\sL_1 \oplus \ldots \oplus \sL_N)$ as a polynomial $f$ in terms of pullbacks of $\psi$,  $\kappa$, and $\Delta_I$ classes,
then, which gives 
\[\begin{array}{lr}
\langle\alpha_1, \ldots, \alpha_k \rangle& = p_* \frac{|G|^g}{\deg (st)} PD st_* \left( 
PD^{-1}\left(st^* f(\kappa_1, \ldots, \kappa_D, \psi_1, \ldots, \psi_k, \{\Delta_I\}_{I \in \sI})
\right)\right)\\
	& = |G|^g p_* \left( f(\kappa_1, \ldots, \kappa_D, \psi_1, \ldots, \psi_k, \{\Delta_I\}_{I \in \sI})\right),
	\end{array}
\]
and this allows us to use intersection theory to solve for these numbers.

Recall the following well-known property of chern classes from K-theory.
\begin{equation}\label{chernclassprop}
\ds{c_t(\mathscr{E}) = \frac{1}{c_t(-\mathscr{E})} = \frac{1}{1-(-c_t(-\mathscr{E}))} = \sum_{i=0}^\infty (-c_t(-\mathscr{E}))^i}
\end{equation}

A vector bundle $\mathbb{E}$ is concave when  $R^0 \pi_*\mathbb{E} = 0$, or when
\[
R^\bullet \pi_* \mathbb{E} = R^0 \pi_*\mathbb{E}-R^1 \pi_*\mathbb{E} = -R^1 \pi_*\mathbb{E}
\]
Equation \ref{chernclassprop} gives 
\[
c_t(R^1 \pi_*\bigoplus_i \sL_i) = \sum_{j=0}^{\infty}\left(-c_t(R^\bullet \pi_*\bigoplus_i \sL_i)\right)^j 
\]

Also, since the total Chern classes are multiplicative, we have,
\begin{equation}\label{chernclasssplit}
\ds{c_k(R^\bullet \pi_*\bigoplus_i \sL_i) = \sum_{\sum i_j = k}\left( \prod_{j = 1}^n c_{i_j}(R^\bullet \pi_*\sL_{j})\right)}.
\end{equation}
Together Equations \ref{chernclassprop} and \ref{chernclasssplit} give a formula for finding the $i$th chern class of $R^1 \pi_*\bigoplus_i \sL_i$ in terms of the chern classes of the $R^\bullet \pi_*\sL_{j}$.

It is well-known that Chern characters can be expressed in terms of Chern classes, but
it is also possible to express Chern classes in terms of Chern characters (for example, in  \cite{EJK}).  We have
\begin{equation}\label{chartoclass}
\begin{array}{lr}
c_t(R^\bullet \pi_*\sL_{k}) &=  \exp\left( \sum_{i = 1}^\infty (i-1)! (-1)^{i-1}ch_i(R^\bullet \pi_*\sL_{k}) t^i \right)\\
			& \sum_{j = 0}^\infty \frac{1}{j!}\left( \sum_{i = 1}^\infty (i-1)! (-1)^{i-1}ch_i(R^\bullet \pi_*\sL_{k}) t^i \right)^j 
\end{array}
\end{equation}

 Thus, there exists a polynomial $f$ such that 
\begin{equation}\label{chern_to_char}
c_D(R^1 \pi_*\bigoplus_i \sL_i) = f(ch_1(R^\bullet \pi_*\sL_{k}), \ldots, ch_D(R^\bullet \pi_*\sL_{k}))
\end{equation}

In Section \ref{chiodo} we will use a result of Chiodo \cite{Chi} to express $ch_i(R^{\bullet} \pi_* \sL_k)$ in terms of tautological cohomology classes on $\bMgn{g}{n}$ and this will allow us to compute the corresponding correlators, but first we need to discuss some cohomology classes on $\sW_{g,n}$ and $\bMgn{g}{n}$ and their relations.  

\subsubsection{Orbicurves}

An \defital{orbicurve} $\sC$ with marked points $p_1 , \ldots , p_k$ is a stable curve $C$ with orbifold structure at each $p_i$ and each node. Near each marked point $p_i$ there is a local group action given by $\ZZ/m_i$ for some positive integer $m_i$. 
Similarly, near each node $p$ there is again a local group $ \ZZ/n_j$ whose action on one branch is inverse to the action on the other branch. 

In a neighborhood of $p_i$, $\sC$ maps to $C$ via the map,  
\[
\varrho : \sC \rightarrow C,
\]
where if $z$ is the local coordinate on $\sC$ near $p_i$, and $x$ is the local coordinate on $C$ near $p_i$, then $\varrho(z) = z^{r} = x$.

Let  $K_C$ be the canonical bundle of $C$. The \defital{log-canonical bundle} of $C$ is the line bundle
\[
K_{C,log} =K_C\otimes \sO(p_1)\otimes \ldots \otimes \sO(p_k),
\]
where $\sO ( p_i)$ is the holomorphic line bundle of degree one whose sections may have a simple pole at $p_i$.

The log-canonical bundle of $\sC$ is defined to be the pullback to $\sC$ of the log-canonical bundle of $C$:
\[
K_{\sC,log} =\varrho^* K_{C,log}.
\]

Given an admissible polynomial $W$, a $W$-structure on an orbicurve $\sC$ is essentially a
choice of $n$ line bundles $\sL_1,...,\sL_n$ so that for each monomial of $W = \sum_j M_j$, with $M_j =x_1^{a{j,1}} \ldots x_n^{a_{j,n}}$, 
we have an isomorphism of line bundles
\[
\varphi_j:  \sL_1^{\otimes a_{j,1}} \ldots \sL_n^{\otimes a_{j,n}} \tilde{\longrightarrow} K_{\sC, log}.
\]

Recall that Fan, Jarvis and Ruan \cite{FJR} defined a stack 
\[
\mathscr{W}_{W,g,k} = \{\mathscr{C}, p_1, \ldots, p_k, \sL_1, \ldots, \sL_N, \varphi_1 \ldots \varphi_s\}
\]
 of stable orbicurves with the additional $W$-structure, and the canonical morphism, 
 \begin{center}
\begin{tikzpicture}
\node at (0,0) {$\mathscr{W}_{g,k}$};
\put(15,0) {\vector(1,0){40}};
\node at (2.5,0) {$\mathscr{M}_{g,k}$};
\node at (1.25,.3) {$st$};
\end{tikzpicture}
 \end{center}	
from the stack of $W$-curves to the stack of stable curves, $\mathscr{M}_{g,k}$. 


Notice that  $\varrho_*$ will take global sections to global sections, and a straightforward computation (see \cite{FJR} \S2.1 ) shows that  if $\sL$ on $\sC$ such that $\sL^{\otimes r } \cong K^{\otimes s}_{\sC,\log}$, then 
\[
(\varrho_* \sL)^r = K_{C,\log}^{\otimes s}(-(r-m_i)p).
\]

Recall that if $\sL$ is the line bundle associated to the sheaf $\mathcal{L}$ and the action at an orbifold point $p_i$ 
on $\sL$ is given in local coordinates by $(z,v) \mapsto (\zeta z, \zeta^{m_i}v)$, then the action on $\mathcal{L}$ will take a generator $s$ of $\mathcal{L}$ near $p_i$ and map it to $\zeta^{(r - m_i)}s$. 
Thus, if $\sL^r \cong K_{\sC, log}^{\otimes s}$ on a smooth orbicurve with action of the local group on $\sL$ defined by $\zeta^{m_i}$ for $m_i >0$ at each marked point $p_i$, then 
\[
\left( \varrho_* \mathcal{L} \right)^r = |\mathcal{L} |^r = \omega_{C,log}^{\otimes s} \otimes \left( \bigotimes_i \sO((-m_i)p_i)\right).
\]

\begin{example}
Suppose $W$ and $G$ are as in Example \ref{Example_WG} with notation as in Notation \ref{FJRWNotation}. If, for a curve in $\bMgn{0}{4}$ the four marked points correspond to the A-model elements 
$\fjrw{1}{(1/4,1/2)}$, $\fjrw{1}{(1/4,1/2)}$, $\fjrw{1}{(1/4,1/2)}$, and $\fjrw{1}{(3/4,5/6)}$, then 
\[\begin{array}{c}
|\sL_x|^{4} = \omega_{C,log} \otimes  \sO((-1)p_1)\otimes  \sO((-1)p_2)\otimes \sO((-1)p_3) \otimes \sO((-3)p_4)\\
|\sL_y|^6 = \omega_{C,log} \otimes  \sO((-3)p_1)\otimes  \sO((-3)p_2)\otimes \sO((-3)p_3) \otimes \sO((-5)p_4)\\
\end{array}\]
\end{example}

\subsubsection{Some Special Cohomology Classes in $\bMgn{g}{k}$ and $\sW_{g,k}$}
For $i \in \{1, \ldots, k\}$, $\psi_i \in H^q(\bMgn{g}{k})$ is the first Chern class of the line bundle whose fiber at $(C, p_1, \ldots, p_k)$ is the cotangent space to $C$ at $p_i$. 
In other words, if $\pi_{k+1}:\bMgn{g}{k+1} = \uC \rightarrow \bMgn{g}{k}$ is the universal curve, and it is also the morphism obtained by forgetting the $(k+1)$-st marked point, $\omega_{\pi_{k+1}}=\omega$ is the relative dualizing sheaf, and $\sigma_i$ is the section of $\pi_{k+1}$ which attaches a genus-zero, three-pointed curve to $C$ at the point $p_i$, and then labels the two remaining marked points on the genus-zero curve $i$ and $k+1$,
 \begin{center}
\begin{tikzpicture}
\node at (-.1,0) {$\bMgn{g}{k+1}$};
\put(0,-12){\vector(0, -5){30}};
\node at (-.1,-1.9) {$\bMgn{g}{k}$};
\node at (.4,-.75) {$\pi_{k+1}$};
\draw[->] (-.5,-1.5) arc (-160: -200: 50pt);
\node at (-1, -.75) {$\sigma_i$};

\end{tikzpicture}
 \end{center}	
then, $\mathbb{L} = \sigma^*(\omega)$ is the \textit{cotangent line bundle} and its first Chern class is $\psi_i$:
\[
\psi_i = c_1(\sigma^*(\omega)), 
\]

Let $D_{i,k+1}$ be the image of $\sigma_i$ in $\bMgn{g}{k+1}$, then we define 
\[
K = c_1\left(\omega\left(\sum_{i = 1}^k D_{i,k+1}\right)\right) ,
\]
and for $ a \in \{1, \ldots, 3g-3+k\}$, 
\[
\kappa_a = \pi_*(K^{a+1}).
\]

Each partition $I \sqcup J = \{1, \ldots, k\}$ and $g_1 + g_2 = g$ of marks and genus  such that $1 \in I$, $2g_1 - 2 + |I|+1>0$ and $2g_2 - 2 + |J|+1>0$, gives an irreducible boundary divisor, which we label $\Delta_{g_1, I}$.
These boundary divisors are the nodal curves in $\bMgn{g}{k}$.  For example, the boundary divisor $\Delta_{1,\{1,2\}}$ in $\bMgn{1}{5}$ and its dual graph are given below.
\begin{center}
\begin{tikzpicture}

\draw (-5,0) ellipse (25pt and 20pt);
\node at (-4.1,0) {$\bullet$};
\draw (-3.4,0) ellipse (20pt and 20pt);
\draw (-5.3,0) arc (-160: -20: 10pt);
\draw (-4.75,-.1) arc (10: 170: 6pt);
\node at (-5,-.4) {$\cdot$};
\node[scale = .75] at (-5.2,-.4) {$1$};
\node at (-5.2,.4) {$\cdot$};
\node[scale = .75] at (-5.4,.4) {$2$};
\node at (-3,0) {$\cdot$};
\node[scale = .75] at (-3.2,0) {$3$};
\node at (-3.4,-.4) {$\cdot$};
\node[scale = .75] at (-3.6,-.4) {$4$};
\node at (-3.3,.4) {$\cdot$};
\node[scale = .75] at (-3.5,.4) {$5$};

\draw (-.85,.5) node[anchor = east,scale = .75] {$1$}--(-.15,.15);
\draw (.2,0) --(.5,0);
\node[scale = .75] at (0,0) {$1$};
\draw (0,0) circle (5 pt);
\draw (.5,0) --(1,0);
\draw (1,0) -- (1.75,.65) node[anchor = west, scale = .75] {$3$};
\node at (1,0) {$\bullet$};
\draw (-.85,-.5)node[scale = .75, anchor = east]{$2 $}--(-.15,-.15);
\draw (2,0) node[anchor = west, scale = .75] {$4$}--(1,0);
\draw (1.75,-.65)node[anchor = west, scale = .75]{$5 $}--(1,0);
\end{tikzpicture}
\end{center}

We will use the following well-known lemma for $\bMgn{0}{k}$ for expressing $\psi$ classes in terms of boundary divisors.
\begin{lem}\label{psi_relation}
\[
\psi_i = \sum_{\substack{a \in I\\ b,c \notin I}} \Delta_I
\]
\end{lem}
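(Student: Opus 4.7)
The plan is to prove the lemma by induction on the number of marked points $k$, using the forgetful morphism $\pi : \bMgn{0}{k+1} \to \bMgn{0}{k}$ which drops the $(k+1)$-st mark, together with two well-known comparison formulas. The first is the $\psi$-class comparison
$$\psi_i^{[k+1]} \;=\; \pi^{*}\psi_i^{[k]} \;+\; \Delta_{\{i,k+1\}}^{[k+1]},$$
whose correction term is the image of the tautological section attaching a rational bubble carrying marks $i$ and $k+1$. The second is the pullback formula for boundary divisors,
$$\pi^{*}\Delta_I^{[k]} \;=\; \Delta_I^{[k+1]} \;+\; \Delta_{I\cup\{k+1\}}^{[k+1]},$$
which reflects the two stable ways to place the extra mark relative to the separating node.

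For the base case $k=4$, identify $\bMgn{0}{4}$ with $\mathbb{P}^{1}$ via the cross-ratio. Every degree-one class in $H^{2}(\mathbb{P}^{1};\mathbb{Q})$ is determined by its degree, and a direct computation shows that $\psi_i$ has degree one. The right-hand side of the claimed identity collapses to the single boundary divisor $\Delta_{\{i,d\}}$, where $d$ is the unique index in $\{1,2,3,4\}\setminus\{i,b,c\}$, which is also of degree one. Hence both sides agree.

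For the inductive step, apply both comparison formulas to the inductive hypothesis on $\bMgn{0}{k}$. Pulling back $\psi_i^{[k]} = \sum_I \Delta_I^{[k]}$ contributes, for each admissible $I\subseteq\{1,\dots,k\}$ with $i\in I$, $b,c\notin I$, and $2\le|I|\le k-2$, both summands $\Delta_I^{[k+1]}$ and $\Delta_{I\cup\{k+1\}}^{[k+1]}$. Adding the correction $\Delta_{\{i,k+1\}}^{[k+1]}$ supplies precisely the boundary divisor indexed by $I\cup\{k+1\}$ with $I=\{i\}$, which was excluded from the inductive sum because the singleton $\{i\}$ fails the stability bound $|I|\ge 2$ on $\bMgn{0}{k}$. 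A short check confirms that the union of all index sets produced this way is exactly the collection of $J\subseteq\{1,\dots,k+1\}$ with $i\in J$, $b,c\notin J$, and $2\le|J|\le k-1$, as required.

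The main obstacle is purely combinatorial bookkeeping: checking the stability conditions $|I|\ge 2$ and $|I^{c}|\ge 2$ at the boundaries of the range, verifying that the "new" divisor $\Delta_{\{i,k+1\}}^{[k+1]}$ is the unique term not accounted for by the pullback, and confirming that each boundary divisor on the target side of the identity appears with multiplicity exactly one. No geometric input beyond the two cited comparison formulas and the identification $\bMgn{0}{4}\cong\mathbb{P}^{1}$ is required.
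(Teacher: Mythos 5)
Your proof is correct. Note that the paper itself offers no proof of this lemma---it is stated as a well-known fact---so there is no argument of the author's to compare against. Your induction via the two comparison formulas $\psi_i = \pi^{*}\psi_i + \Delta_{\{i,k+1\}}$ and $\pi^{*}\Delta_I = \Delta_I + \Delta_{I\cup\{k+1\}}$, with base case $\bMgn{0}{4}\cong\mathbb{P}^1$, is the standard derivation; it is equivalent to the usual one-step argument of pulling back along the map to $\bMgn{0}{\{i,b,c\}}\cong \mathrm{pt}$, where $\psi_i$ vanishes, and collecting the correction divisors. The bookkeeping you describe (each $J$ arises exactly once: from $J$ itself when $k+1\notin J$, from $J\setminus\{k+1\}$ when $k+1\in J$ and $|J|\ge 3$, and from the correction term when $J=\{i,k+1\}$) checks out. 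The only cosmetic remark is that the paper's labeling convention requires $1\in I$ for $\Delta_I$, so for $i\ne 1$ some of your index sets should be replaced by their complements; this does not affect the divisor classes or the validity of the argument.
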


Now we consider some cohomology classes on $\sW_{g,k}$. Consider the diagram:
\begin{center}
\begin{tikzpicture}
\node at (0,0) {$\uC_{g,k}$};
\draw[->] (.1,-.4) -- (.1, -1.6);
\node at (.35, -1) {$\pi$};
\draw[<-] (-.1,-.4) -- (-.1, -1.6);
\node at (-.45, -1) {$\sigma_i$};
\node at (0,-2) {$\sW_{g,k}$};
\draw[->] (.5,-2) -- (1.5, -2);
\node at (1, -1.8) {$st$};
\node at (2,-2) {$\bMgn{g}{k}$};
\end{tikzpicture}
\end{center}

where $\uC_{g,k}$ is the universal orbicurve over $\sW_{g,k}$.

The stack $\sW_{g,k}$ has cohomology classes $\tilde{\psi}_i$, $\tilde{\kappa}_a$ and $\tilde{\Delta}_I$ defined in the same manner as $\psi_i$, $\kappa_a$ and $\Delta_I$ in $\bMgn{g}{k}$. They satisfy the following properties (see \S 2.3 in \cite{FJR}).

\begin{equation}\label{sMvssWprop}
\tilde{\psi}_i  = st^*(\psi_i), \quad 
\tilde{\kappa}_a  = st^*(\kappa_a), \quad 
r \tilde{\Delta_I} = st^* \Delta_I 
\end{equation}

\subsection{Chiodo's Formula}\label{chiodo}

Chiodo's formula states that for the universal $r$th root $\sL$ of $\omega^s_{log}$ on  the universal family of pointed orbicurves $\pi: \uC_{g,k} \rightarrow \sW_{g,k}(\gamma_1,\dots,\gamma_k)$, with local group $\langle \gamma_i\rangle$ of order $m_i$ at the $i$th marked point, we have
$$\ch(R^{\bullet}\pi_* (\sL)) =
\sum_{d\ge 0} \left[\frac{B_{d+1}(s/r)}{(d+1)!}\kappa_d - \sum_{i=1}^k\frac{B_{d+1}(\Theta^{\gamma_i})}{(d+1)!}\psi_i^d + \frac12\sum_{\Gamma_{cut}} \frac{rB_{d+1}(\Theta^{\gamma_+})}{(d+1)!}\tilde{\varrho}_{\Gamma_{cut} *}\left(\sum_{\substack{i+j=d-1\\i,j\ge0}} (-\psi_+)^i\psi_{-}^j\right)\right],
$$
where the second sum is taken over all decorated stable graphs $\Gamma_{cut}$ with one pair of tails labelled $+$ and $-$, respectively, so that once the $+$ and $-$ edges have been glued, we get a single-edged, $n$-pointed, connected, decorated graph of genus $g$ and with additional decoration ($\gamma_+$ and $\gamma_-$) on the internal edge.    Each such graph $\Gamma_{cut}$ has the two cut edges, decorated with group elements $\gamma_+$ and $\gamma_-$, respectively, and the map $\tilde{\varrho}_{\Gamma_{cut}}$ is the corresponding gluing map  $\left(\bMgn{\Gamma_{cut}}{}^{r/s}\right)
\rightarrow \bMgn{g}{k}^{r/s}(\gamma_1,\dots,\gamma_k)$.

In the genus zero case, a choice of $\Gamma_{cut}$ is the same as a a partition $K \sqcup K' = \{1, \ldots, k\}$.  We will sum over all partitions containing the marked point 1, so we will not need to multiply the last sum by $\frac{1}{2}$.

Also, recall from Equation \ref{sMvssWprop} that 
$\tilde{\Delta_I} = \frac{1}{r}st^* \Delta_I$.  So, 
 \[
ch_t(R^{\bullet} \pi_* \mathcal{\sL}) = st^*\left(\sum_{d \geq 0} \left(\frac{B_{d+1}(s/r)}{(d+1)!}\kappa_d
-\sum_{i=1}^n \frac{B_{d+1}(m_i/r)}{(d+1)!}\psi_i^d
+ \sum_{K} \frac{B_{d+1}(\gamma_+^K)}{(d+1)!} (j_K)_* (\gamma_{d-1})\right)t^d\right)
\]

 We can use Lemma \ref{psi_relation} to rewrite $\psi_+$ and $\psi_-$ in terms of boundary divisors of $\bMgn{0}{n_+}$ and $\bMgn{0}{n_-}$.  This will enable us to easily push down these classes to $\bMgn{0}{n}$.  This idea comes from \cite{D4}, and yields the following formulas:
 \begin{equation}\label{psi_plus_minus}
 \begin{array}{lr}
 (j_K)_*(\psi_+) = 0 & \text{ if } |K| \leq 2\\
 (j_K)_*(\psi_+) = 
 \sum_{\{1,a,b\} \subseteq I \subseteq K} \Delta_K \Delta_I + 
  \sum_{1 \in I \subseteq K-\{a,b\}} \Delta_K \Delta_{I\cup K^c}
  & \text{ if } |K| > 2\\
 (j_K)_*(\psi_+) = 0 & \text{ if } n-|K| \leq 2\\
 (j_K)_*(\psi_+) = \sum_{\substack{\emptyset \neq I \subset K^c\\a,b, \notin I}} \Delta_K \Delta_{I \cup K}
  & \text{ if } n-|K| > 2\\
\end{array}
 \end{equation}

 Using the formulas in Equation \ref{psi_plus_minus} and the polynomial defined in Equation \ref{chern_to_char} we can now express $\Lambda$ as a polynomial in $\psi$, $\kappa$ and $\Delta$ classes, 

 \[
 \begin{array}{l}
\Lambda_{g,k}^W(\alpha_1, \ldots, \alpha_k)\\
\qquad = (-1)^D f
\left(
 \left(\frac{B_{2}(s/r)}{(2)!}\kappa_1-\sum_{i=1}^n \frac{B_{2}(m_i/r)}{(2)!}\psi_i
+ \sum_{K} \frac{B_{2}(\gamma_+^K)}{(2)!} (j_K)_* (\psi_- -\psi_+)\right),\ldots\right.\\
 \left.\qquad \left(\frac{B_{D+1}(s/r)}{(D+1)!}\kappa_D-
 \sum_{i=1}^n \frac{B_{D+1}(m_i/r)}{(D+1)!}\psi_i^D
+ \sum_{K} \frac{B_{D+1}(\gamma_+^K)}{(D+1)!} (j_K)_* (\sum_{i + j = D-1} (-\psi_+)^i \psi_-^j)\right)
\right)
\end{array}
\]

The following lemma allows us to always choose a $\gamma_+^K$ in a way that makes sense in Chiodo's formula.

\begin{lem}
Let $B$ be a degree one boundary graph, with decorations $\gamma_1^1, \ldots, \gamma_1^{k_1}$ for the first node and $\gamma_2^1, \ldots, \gamma_2^{k_2}$ for the second,  and genera $g_1$ and $g_2$, respectively.
If a smooth curve with decorations $\gamma_1^1, \ldots, \gamma_1^{k_1}, \gamma_2^1, \ldots, \gamma_2^{k_2}$ and genus $g_1 + g_2$ has integer line bundle degree,
 then it is possible to assign decorations to the edge of $B$ such that each node will have integral line bundle degree.
\end{lem}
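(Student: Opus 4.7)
The plan is to reduce the claim to a single linear congruence in each coordinate of the edge decoration $\gamma_+$ and then verify that the resulting element actually lies in $G_W^{max}$. Since the decoration on the opposite side of the node is forced by $\gamma_- \equiv -\gamma_+ \pmod 1$, no independent choice is needed there.

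For each variable $x_j$ of $W$, write $q_j = s_j/r_j$ in lowest terms, so the corresponding line bundle satisfies $\sL_j^{r_j}\cong K_{\sC,\log}^{s_j}$. The isomorphism near the end of Section \ref{review} then gives
\[
r_j\deg\sL_j = s_j(2g-2+k)-\sum_i m_{i,j},\qquad m_{i,j}/r_j = \gamma^i_j,
\]
on any irreducible component of genus $g$ with $k$ decorated marks. Applying this to each side of $B$, treating the node as an extra marked point carrying $\gamma_+$ on one side and $\gamma_-$ on the other, and then summing the two expressions, the node contributions $m_{+,j}+m_{-,j}$ vanish modulo $r_j$, and I recover exactly the degree formula on the smooth curve of genus $g_1+g_2$ with $k_1+k_2$ marks. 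The hypothesis that this smooth degree is integral therefore forces the two component degrees to have integer sum, so integrality on the first component is equivalent to integrality on the second. It then suffices to solve
\[
m_{+,j}\equiv s_j(2g_1-1+k_1)-\sum_{i=1}^{k_1} m_{1,i,j}\pmod{r_j}
\]
coordinate-by-coordinate, which uniquely determines a candidate $\gamma_+\in(\QQ/\ZZ)^n$.

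To finish, I would verify $\gamma_+\in G_W^{max}$: for any monomial $\prod_j x_j^{a_j}$ of $W$,
\[
\sum_j a_j\gamma_{+,j} = (2g_1-1+k_1)\sum_j a_j q_j-\sum_{i=1}^{k_1}\sum_j a_j\gamma^i_{1,j},
\]
which is an integer by quasihomogeneity ($\sum_j a_j q_j=1$) together with the existing decorations already being diagonal symmetries of $W$ (so $\sum_j a_j\gamma^i_{1,j}\in\ZZ$ for each $i$). The main obstacle is essentially combinatorial bookkeeping: the heart of the argument is the cancellation of $m_{+,j}+m_{-,j}$ modulo $r_j$ in the summed degree formula, after which quasihomogeneity is exactly what is needed to guarantee the resulting $\gamma_+$ is a symmetry of $W$.
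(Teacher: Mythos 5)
Your argument is correct and follows essentially the same route as the paper: both proofs define the edge decoration by $\gamma_+ \equiv J(2g_1 - 1 + k_1) - \sum_i \gamma_1^i \pmod{1}$ so that the first component's line bundle degrees are forced to be integral, and then use additivity of the degree sums together with the cancellation $\gamma_+ + \gamma_- \equiv 0$ to conclude that the second component's degrees are integral as well. Your coordinate-wise congruence phrasing and the closing check that $\gamma_+ \in G_W^{max}$ are harmless additions (though note the local indices $r_j$ should be taken as a common denominator of $q_j$ and the given decorations rather than the reduced denominator of $q_j$, since group elements such as $(1/3,5/6)$ for $x^3+xy^2$ need not have denominators dividing those of the weights); the substance of the argument is identical to the paper's.
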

\begin{proof}
If the line bundle degree of the smooth curve is integral then:
\[
S_0 = J(2(g_1+g_2) -2 + k_1+k_2) - \sum_i \gamma_1^i -\sum_j \gamma_2^j \in \mathbb{Z}^n
\]
Similarly let $S_1$ and $S_2$ be the equivalent sums in $\QQ^n$, corresponding to the nodes with decorations $\gamma_1^1, \ldots, \gamma_1^{k_1}$ and $\gamma_2^1, \ldots, \gamma_2^{k_2}$, respectively.  
To find $\gamma_0$ we take
\[
\gamma_0 \equiv J(2g_1 -2 + (k_1+1)) - \sum_i \gamma_1^i
\]
Then, by Lemma \ref{lem_lastelement} this will force the $S_1$ to be an integer vector.  

Also,
\[
S_0 = S_1 +\gamma_0+ J(g_2 -2 + (k_2+1)) -\sum_j \gamma_2^j = S_1 + S_2
\]
Which implies that $S_2 \in \mathbb{Z}$.
\end{proof}
We have now described the virtual class $\Lambda$ in the concave case explicitly in terms of tautological classes on the moduli of stable curves.  Now we will talk about how these can actually be computed. 
As a special case we have the following explicit formula for concave genus-zero four-point correlators.
\begin{lem}\label{4pt_concave}
If $\langle \fjrw{1}{g_1}, \fjrw{1}{g_2}, \fjrw{1}{g_3}, \fjrw{1}{g_4} \rangle$ is a genus-zero, four-point correlator which satisfies parts (2) and (3) of Axiom \ref{sel_rules}, and 
if it is also satisfies the hypotheses of  the concavity axiom, (all insertions are narrow, all line bundle degrees are negative, and all line bundle degrees of the nodes of the boundary graphs $\Delta_{1,2}$, $\Delta_{1,3}$, and $\Delta_{1,4}$ are all negative),  then
\[
\left\langle \fjrw{1}{g_1}, \fjrw{1}{g_2}, \fjrw{1}{g_3}, \fjrw{1}{g_4} \right\rangle = \frac{1}{2}\sum_{i=1}^N \left(B_2(q_i) - \sum_{j=1}^4 B_2((g_j)_i) + \sum_{k=1}^3 B_2(\gamma^j_+) \right)
\]
\[
 =\frac{1}{2} \sum_{i=1}^N \left(q_i(q_i-1)- \sum_{j=1}^4 \theta_j^i(\theta_j^i-1) + \sum_{k=1}^3 \gamma^j_+(\gamma^j_+-1) \right)
\]

\noindent where for each $j$ $g_j = (\theta_j^1, \ldots, \theta_j^n)$, $\gamma^1_+ \equiv J - g_1 - g_2$, $\gamma^2_+ \equiv J - g_1 - g_3$, and $\gamma^3_+ \equiv J - g_1 - g_4$, and $B_2$ is the 2nd Bernoulli polynomial.
\end{lem}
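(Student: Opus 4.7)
The plan is to apply the concavity axiom, expand using Chiodo's formula specialized to $d=1$, and integrate the resulting polynomial in tautological classes over $\bMgn{0}{4}$.  The key simplification is dimensional: $\dim\bMgn{0}{4}=1$, so only the degree-one part of the Chern character can survive integration.

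First, since every insertion is narrow and all relevant line bundle degrees are negative (on the generic smooth curve and on each of the three boundary graphs $\Delta_{1,2}$, $\Delta_{1,3}$, $\Delta_{1,4}$), concavity applies and $\Lambda_{0,4}^W$ is given by $(-1)^D c_D(R^1\pi_*\bigoplus_i \sL_i)$ as in Equation \eqref{concave_lambda}.  Selection rule (2) of Axiom \ref{sel_rules} forces the rank $D$ to equal $\dim\bMgn{0}{4}=1$, so only $c_1$ is needed.  Because $c_1$ is additive on direct sums and equals $ch_1$ in degree one, while concavity gives $R^\bullet\pi_*\sL_i=-R^1\pi_*\sL_i$, we obtain
\[
(-1)\,c_1\!\left(R^1\pi_*\bigoplus_i \sL_i\right) \;=\; \sum_i ch_1(R^\bullet\pi_*\sL_i).
\]

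Next, apply Chiodo's formula to each $\sL_i$ with $d=1$.  Since $\gamma_0=1$, the boundary contribution is simply $\tilde\varrho_{\Gamma_{cut}*}(1)=\tilde\Delta_K$, and using the relations $\tilde\kappa_1=st^*\kappa_1$, $\tilde\psi_j=st^*\psi_j$, $r\tilde\Delta_K=st^*\Delta_K$ from \eqref{sMvssWprop}, the explicit factor of $r$ in front of the boundary term absorbs cleanly to give
\[
ch_1(R^\bullet\pi_*\sL_i) \;=\; st^*\!\left(\tfrac{1}{2} B_2(q_i)\kappa_1 \;-\; \tfrac{1}{2}\sum_{j=1}^4 B_2(\theta_j^i)\psi_j \;+\; \tfrac{1}{2}\sum_{k=1}^3 B_2\bigl((\gamma_+^k)_i\bigr)\Delta_k\right),
\]
where the three partitions are $\{1,j\}\sqcup\{\text{rest}\}$ for $j=2,3,4$, and Lemma \ref{lem_lastelement} applied to the three-pointed plus side identifies $\gamma_+^k$ with $J-g_1-g_{j}$, exactly as in the statement.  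Pushing down to $\bMgn{0}{4}$, the $\deg(st)$ in the concavity formula cancels with the $\deg(st)$ coming from $st_*\,st^*$, leaving the correlator equal to the integral of the bracketed expression (summed over $i$) against $\bMgn{0}{4}\cong\mathbb{P}^1$.  On this space each of $\psi_j$, $\kappa_1$, and $\Delta_k$ is a point class, so integrates to $1$; summing over the $N$ variables produces the first stated formula, and the second follows by writing $B_2(x)=x(x-1)+\tfrac{1}{6}$ and observing that the constants cancel via $1-4+3=0$.

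The main obstacle will be the careful bookkeeping of the various factors of $r$ and $\deg(st)$ appearing in the passage from $\sW_{0,4}$ to $\bMgn{0}{4}$, together with verifying that the negativity hypothesis on the boundary nodes is exactly what licenses applying Chiodo's formula uniformly: without it, a non-concave restriction on some boundary component would make the $R^0$ part nonzero and spoil the identity $c_1(R^1)=-ch_1(R^\bullet)$.
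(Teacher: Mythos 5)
Your proposal is correct and follows essentially the same route as the paper's proof: apply the concavity axiom, specialize Chiodo's formula to degree one where $c_1=ch_1$ and the boundary term reduces to $\tilde\Delta_K$ for $K=\{1,2\},\{1,3\},\{1,4\}$, pull back via Equation \eqref{sMvssWprop}, and integrate over $\bMgn{0}{4}$ where each tautological class gives $1$. Your handling of the $B_2$ constants cancelling via $1-4+3=0$ and of the factors of $r$ and $\deg(st)$ matches the paper's computation.
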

\begin{proof}
\[\begin{array}{rl}
\ds{ch_t(-R^1 \pi_* \mathcal{L}_i)=\sum_{d \geq 0} \bigg(\frac{B_{d+1}(q_i)}{(d+1)!}\kappa_d}& \ds{-\sum_{j=1}^4 \frac{B_{d+1}(\theta_j^i)}{(d+1)!}\psi_j^d }\\
	& \ds{+ r\sum_{K} \frac{B_{d+1}((\gamma_+^K)_i)}{(d+1)!} (p_K)_* \sum_{k= 0}^{d-1}(-\psi_+)^k(\psi_-)^{d-1-k}\bigg)t^d}
\end{array}
\]

Which means that 
\[
ch_1 (-R^1 \pi_* \mathcal{L}_i) =\frac{B_{2}(q_i)}{(2)!}\kappa_1
-\sum_{j=1}^4 \frac{B_{2}(\theta_j^i)}{(2)!}\psi_j + r\sum_{K} \frac{B_{2}((\gamma_+)_K)}{(2)!} (p_K)_* (1)
\]

Notice that $(p_K)_*(1_{\bMgn{0}{3}}) = \Delta_K$, and that for $\bMgn{0}{4}$, our choices for $K \in \Gamma_{cut}$ are just $\{1,2\}$, $\{1,3\}$, and $\{1,4\}$.  Numbering these gives: 

\begin{displaymath} 
     \begin{array}{ll}
      (\gamma_+)_1 = J - g_1 - g_2 , & \text{ for }K = \{1,2\},\\
       (\gamma_+)_2 =J - g_1 - g_3 , & \text{ for }K = \{1,3\}, \text{ and }\\
       (\gamma_+)_3 =J - g_1 - g_4 , & \text{ for }K = \{1,4\}.\\
     \end{array}
\end{displaymath} 

Since $B_2(x) = x^2 - x + \frac{1}{6}$, 

\[
\begin{array}{rl}
\ds{ch_1 (-R^1 \pi_* \mathcal{L}_i) }&\ds{=\frac{1}{2}\Bigg(\bigg(q_i(q_i - 1) + \frac{1}{6}\bigg) \kappa_1}\\
&\ds{
-\sum_{j=1}^4\bigg( \theta_j^i(\theta_j^i-1) + \frac{1}{6}\bigg)\psi_j}\\
&\ds{ + r\sum_K \bigg(\gamma_+^K(\gamma_+^K - 1)+\frac{1}{6}\bigg) \tilde{\Delta_{K}} 
   \Bigg).}
\end{array}
\]

The psi and kappa classes in $\sW_{0,4}$ are all pullbacks of the equivalent psi and kappa classes in $\bMgn_{0,4}$, and the $\Delta_I$ classes are scalar multiples of the equivalent classes in $\bMgn_{0,4}$, as in Equation \ref{sMvssWprop}

So, 
\[
\begin{array}{rl}
\ds{ch_1 (-R^1 \pi_* \mathcal{L}_i) }&\ds{=\frac{1}{2}st^*\bigg(\big(q_i(q_i - 1) \big) \kappa_1
-\sum_{j=1}^4\big( \theta_j^i(\theta_j^i-1) \big)\psi_j+ \sum_K\big(\gamma_+^K(\gamma_+^K - 1)\big) \Delta_{K}   \bigg).}
\end{array}
\]

We use Equation \ref{chartoclass} to convert to chern classes, 
\[\begin{array}{rl}
\ds{c_t(-R^1 \pi_* \mathcal{L}_i)}&\ds{ = exp\left(\sum_{i = 1}^\infty (-1)^{i-1} (i-1)! ch_i(-R^1 \pi_* \mathcal{L}_i)t^i\right)
}\\
&\ds{ = \sum_{j = 0}^\infty \frac{1}{j!}\left(\sum_{i = 1}^\infty (-1)^{i-1} (i-1)! ch_t(-R^1 \pi_* \mathcal{L}_i)t_i\right)^j}.
\end{array}\]
Which means that 
$c_0(-R^1 \pi_* \mathcal{L}_i) = 1$ and $c_1(-R^1 \pi_* \mathcal{L}_i) = ch_1(-R^1 \pi_* \mathcal{L}_i)$, then, by 
Equation \ref{chernclasssplit}  since $D = 1$, 

\[
\begin{array}{rl}
\ds{c_1(-R^1 \pi_* \oplus_i \mathcal{L}_i) }&\ds{= \sum_{\substack{0 \leq j_1, \ldots j_N \\ j_1 + \ldots+ j_N = 1 }} \prod c_{j_i}(-R^1 \pi_* \mathcal{L}_i)}
\\
&\ds{ = \sum_{i=1}^N c_1(-R^1 \pi_* \mathcal{L}_i) = \sum_{i=1}^N ch_1(-R^1 \pi_* \mathcal{L}_i).}
\end{array}
\]

Finally we recall that $
 c_t(R^1 \pi_* \mathcal{L}_i) = -\sum_{j} (c_t(-R^1 \pi_* \mathcal{L}_i))^j$, so, 
\[
c_1(R^1 \pi_* \mathcal{L}_i) = -c_1(-R^1 \pi_* \mathcal{L}_i) = -\sum_{i=1}^N ch_1(-R^1 \pi_* \mathcal{L}_i).
\]
And, from Equation \ref{concave_lambda}
\[\begin{array}{c}
\ds{\Lambda_{0,4}(\alpha_1, \ldots, \alpha_4 ) = \frac{1}{deg(st)}PD st_*PD^{-1} (-1)^1 (- \sum_{i=1}^N ch_1(-R^1 \pi_* \mathcal{L}_i))}\\
\ds{= \frac{1}{deg(st)}PD st_*PD^{-1}\sum_{i=1}^N ch_1(-R^1 \pi_* \mathcal{L}_i))}\\
\ds{=\frac{1}{2}\Bigg(\bigg(q_i(q_i - 1) + \frac{1}{6}\bigg) \kappa_1
-\sum_{j=1}^4\bigg( \theta_j^i(\theta_j^i-1) + \frac{1}{6}\bigg)\psi_j + r\sum_K \bigg(\gamma_+^K(\gamma_+^K - 1)+\frac{1}{6}\bigg) \tilde{\Delta_{K}} 
   \Bigg).}

\end{array}
\]

Next, we notice that if $p: \bMgn{0}{4} \rightarrow (\bullet)$ is the map sending all of $\bMgn{0}{4}$ to a point, then the push forward of any of the cohomology classes mentioned above is equal to 1. That is,
\[
p_*\kappa_1 = p_* \psi_i = p_*\Delta_K = 1.
\]
So, 
\[
\begin{array}{l}
\ds{ \left\langle \fjrw{1}{g_1}, \fjrw{1}{g_2}, \fjrw{1}{g_3}, \fjrw{1}{g_4} \right\rangle }
\\
\begin{array}{c}
\ds{=p_* \frac{1}{deg(st)}PD st_*PD^{-1}\sum_{i=1}^N ch_1(-R^1 \pi_* \mathcal{L}_i))}
\\
\ds{ =\frac{1}{2} p_*\sum_{i=1}^N\left(\left(q_i(q_i - 1) \right) \kappa_1
-\sum_{j=1}^4\left( \theta_j^i(\theta_j^i-1) \right)\psi_j  + \sum_K\left(\gamma_+^K(\gamma_+^K - 1)\right) \Delta_{K}\right)}\\
\ds{ =\frac{1}{2}\sum_{i=1}^N\left(q_i(q_i - 1)  + \gamma_+^1(\gamma_+^1 - 1)  + \gamma_+^2(\gamma_+^2 - 1) +\gamma_+^3(\gamma_+^3 - 1)  -\sum_{j=1}^4\ \theta_j^i(\theta_j^i-1)   \right) .}
\end{array}\end{array}\]
\end{proof}

\begin{example}\label{ExampleConcave}
We will compute the genus-zero four-point correlator \\
$\left\langle  \fjrw{1}{(1/4,1/2)}, \fjrw{1}{(1/4,1/2)}, \fjrw{1}{(1/4,1/2)}, \fjrw{1}{(3/4,5/6)} \right\rangle_{0,4}$ for the A-model $\sH_{W_{1,0},\langle J \rangle }$.

It is straightforward to verify that it satisfies Axiom \ref{sel_rules}, and that each of the degree one nodal degenerations of the dual graph are concave.  Thus, 
\[\begin{array}{c}
\left\langle  \fjrw{1}{(1/4,1/2)}, \fjrw{1}{(1/4,1/2)}, \fjrw{1}{(1/4,1/2)}, \fjrw{1}{(3/4,5/6)} \right\rangle_{0,4} = \\
\frac{1}{2} \left(\left(-\frac{3}{16}-\frac{3}{16}-\frac{3}{16}-\frac{3}{16}+\frac{3}{16}+\frac{3}{16}+\frac{3}{16}+\frac{3}{16}\right) \right.
+\left. \left(-\frac{5}{36}-\frac{5}{36}-\frac{5}{36}-\frac{5}{36} +\frac{1}{4}+\frac{1}{4}+\frac{1}{4}+\frac{5}{36}\right) \right)\\
\frac{1}{2}\left( 0 + \frac{1}{3}\right)
=\frac{1}{6}
\end{array}
\]

There are ten concave four-point correlators in $\sH_{W_{1,0},\langle J\rangle}$ and 2 which are not concave.  We will see in the next section that we can use the values of the concave correlators to find other correlator values.

\end{example}
\subsection{Using the Reconstruction Lemma}
In this section we show how to use known correlator values to find unknown correlator values.  In some cases our new methods for computing concave correlators will allow us to compute all genus-zero correlators in the A-model.

The WDVV equations are a powerful tool which can be derived from the Composition axiom.   Applying these equations to correlators, we get the following lemma, 

\begin{lem}\label{Reconstruction}\cite{D4}
\defital{Reconstruction Lemma}.

Any genus-zero, $k$-point correlator of the form 
\[
\left\langle\gamma_1,...,\gamma_{k-3},\alpha,\beta,\epsilon\star\phi\right\rangle_{0,k}
\]
where $ 0 < \deg_\C(\epsilon), \deg_\C(\phi) < \hat{c}$, 
can be rewritten as
 \begin{align*}
\left\langle\gamma_1,...,\gamma_{k-3},\alpha, \beta,\epsilon \star \phi\right\rangle&=
\sum_{I \sqcup J = [k-3]} \sum_l c_{I,J} \left\langle\gamma_{k \in I}, \alpha, \epsilon, \delta_l\right\rangle
		\left\langle\delta'_l, \phi, \beta, \gamma_{j \in J}\right\rangle\\
&-\sum_{\substack{I \sqcup J  = [k-3]\\J \neq \emptyset}}\sum_l c_{I,J}\left\langle\gamma_{k \in I}, \alpha, \beta, \delta_l\right\rangle
		\left\langle \delta'_l, \phi, \epsilon, \gamma_{j \in J} \right\rangle.
\end{align*}
where the $\delta_l$ are the elements of some basis $\mathscr{B}$ and $\delta_l'$ are the corresponding elements of the dual basis $\mathscr{B}'$, and $c_{I,J} = \frac{\prod n_K(\xi_k)!}{\prod n_I(\xi_i)! \prod n_J(\xi_j)!}$. Here $n_X(\xi_x)$ refers to the number of elements equal to $\xi_x$ in the tuple $X$. The product $\prod n_X(\xi_x)! $ is taken over all distinct elements $\xi_x$ in $X$.
\end{lem}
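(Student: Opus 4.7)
The plan is to derive this identity from the WDVV equations, which arise from the equivalence of boundary divisors on $\bMgn{0}{4}$ pulled back to $\bMgn{0}{k+1}$, combined with the composition axiom of the FJRW cohomological field theory.

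First I would start from the $(k+1)$-point genus-zero correlator with insertions $\gamma_1, \ldots, \gamma_{k-3}, \alpha, \beta, \epsilon, \phi$. Using the forgetful morphism $\bMgn{0}{k+1} \to \bMgn{0}{4}$ that remembers only the four distinguished marked points carrying $\alpha, \beta, \epsilon, \phi$, I pull back the classical linear equivalence of boundary points on $\bMgn{0}{4} \cong \mathbb{P}^1$, namely $\Delta_{\{\alpha,\beta\}} \sim \Delta_{\{\alpha,\epsilon\}}$. This yields an equality in $H^*(\bMgn{0}{k+1})$ between two sums of boundary divisors $\Delta_I$: on one side $I$ ranges over subsets of the marks with $\{\alpha,\beta\} \subseteq I$ and $\{\epsilon,\phi\} \subseteq I^c$, and on the other with $\{\alpha,\epsilon\} \subseteq I$ and $\{\beta,\phi\} \subseteq I^c$.

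Next I would pair this relation with the class $\Lambda_{0,k+1}(\gamma_1, \ldots, \gamma_{k-3}, \alpha, \beta, \epsilon, \phi)$ and integrate. By the composition axiom, restricting $\Lambda_{0,k+1}$ to a boundary divisor $\Delta_I$ yields a sum $\sum_l \Lambda_{0,|I|+1}(\ldots, \delta_l) \otimes \Lambda_{0,|I^c|+1}(\delta_l', \ldots)$ over a dual basis $\{\delta_l\}, \{\delta_l'\}$ of $\sH_{W,G}$, so integration produces a sum of products of lower-point correlators. The selection rules of Axiom \ref{sel_rules} kill all terms for which the group elements at the node are incompatible. Equating the two sides of the pulled-back equivalence gives the WDVV identity
\begin{align*}
&\sum_{I \sqcup J = [k-3]} \sum_l c_{I,J} \langle \gamma_{i \in I}, \alpha, \beta, \delta_l \rangle \langle \delta_l', \phi, \epsilon, \gamma_{j \in J} \rangle \\
&\qquad = \sum_{I \sqcup J = [k-3]} \sum_l c_{I,J} \langle \gamma_{i \in I}, \alpha, \epsilon, \delta_l \rangle \langle \delta_l', \phi, \beta, \gamma_{j \in J} \rangle.
\end{align*}

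Finally, I would isolate the single term on the left-hand side with $J = \emptyset$. Using the definition of the Frobenius product $\epsilon \star \phi = \sum_l \langle \epsilon, \phi, \delta_l \rangle \delta_l'$ together with multilinearity of the correlator in its last argument, this term is exactly $\langle \gamma_1, \ldots, \gamma_{k-3}, \alpha, \beta, \epsilon \star \phi \rangle$; the degree hypothesis $0 < \deg_{\C}(\epsilon), \deg_{\C}(\phi) < \hat{c}$ ensures that $\epsilon \star \phi$ is not a tautological reduction to the original correlator. Moving all remaining terms to the right-hand side produces the claimed formula. The main obstacle is bookkeeping the multinomial factors $c_{I,J}$: the forgetful morphism identifies many distinct boundary strata of $\bMgn{0}{k+1}$ whenever some of the $\gamma_i$ coincide, and $c_{I,J}$ is precisely the symmetrization factor recording how many ordered partitions of the $\gamma$'s into $I$ and $J$ pull back to the same divisor.
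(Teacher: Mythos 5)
The paper gives no proof of this lemma---it is quoted from \cite{D4} with only the remark that it follows from the WDVV equations, which in turn come from the Composition axiom---and your argument is precisely that route carried out in detail: pull back the linear equivalence of boundary points on $\bMgn{0}{4}$ to $\bMgn{0}{k+1}$, split $\Lambda_{0,k+1}$ along each boundary divisor via the composition axiom and the dual basis $\{\delta_l\},\{\delta_l'\}$, and isolate the $J=\emptyset$ term, which equals $\left\langle\gamma_1,\ldots,\gamma_{k-3},\alpha,\beta,\epsilon\star\phi\right\rangle$ by the definition of the Frobenius product; your accounting of the factors $c_{I,J}$ as the number of index subsets realizing a given multiset partition of the $\gamma_i$ is also correct. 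This is the standard derivation and matches the approach the paper indicates, so I have nothing to add.
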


We say that an element $\alpha \in \sH_{W,G}$ is \defital{non-primitive} if it can be written $\epsilon \star \phi = \alpha$ for some $\epsilon$ and $\phi$ in $\sH_{W,G}$ with 
$0 < \deg_{\CC} \epsilon, \deg_{\CC} \phi < \deg_{\CC} \alpha$.  Otherwise, we say that $\alpha$ is \defital{primitive}.

\begin{cor}
A genus-zero, four-point correlator containing a \textit{non-primitive} insertion can be rewritten:
 \begin{align*}
\left\langle\gamma,\alpha, \beta,\epsilon \star \psi\right\rangle_{0.4}=
\sum_l \left\langle\gamma, \alpha, \epsilon, \delta_l\right\rangle&\left\langle\delta'_l, \psi, \beta\right\rangle
+ \sum_l \left\langle \alpha, \epsilon, \delta_l\right\rangle\left\langle\delta'_l, \psi, \beta,\gamma\right\rangle\\
&-\sum_l \left\langle \alpha, \beta, \delta_l\right\rangle
		\left\langle \delta'_l, \psi, \epsilon, \gamma \right\rangle.
\end{align*}

\end{cor}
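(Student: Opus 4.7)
The plan is to derive the Corollary as a direct specialization of the Reconstruction Lemma (Lemma \ref{Reconstruction}) to the four-point case $k=4$. Setting $k=4$, the ``extra insertion'' index set is $[k-3]=\{1\}$, so there is exactly one $\gamma$ (which I will call $\gamma$ rather than $\gamma_1$ to match the Corollary's notation), and the correlator on the left-hand side is of the form $\langle\gamma,\alpha,\beta,\epsilon\star\psi\rangle_{0,4}$. The strategy is to enumerate the ordered partitions $I\sqcup J=\{1\}$ appearing in each of the two sums of the lemma, compute the coefficients $c_{I,J}$, and observe that the three resulting terms are exactly the three sums in the Corollary.

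First I would handle the positive sum $\sum_{I\sqcup J=[k-3]}\sum_l c_{I,J}\langle\gamma_{k\in I},\alpha,\epsilon,\delta_l\rangle\langle\delta_l',\phi,\beta,\gamma_{j\in J}\rangle$. With $[k-3]=\{1\}$, there are two partitions: $(I,J)=(\{1\},\emptyset)$ and $(I,J)=(\emptyset,\{1\})$. Since $\{1\}$ is a single element, every multiplicity count reduces to $0!$ or $1!$, so $c_{I,J}=1$ in each case. This yields precisely $\sum_l\langle\gamma,\alpha,\epsilon,\delta_l\rangle\langle\delta_l',\psi,\beta\rangle+\sum_l\langle\alpha,\epsilon,\delta_l\rangle\langle\delta_l',\psi,\beta,\gamma\rangle$, the first two sums of the Corollary (after the trivial renaming $\phi\leadsto\psi$).

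Next I would handle the subtracted sum $\sum_{I\sqcup J=[k-3],\,J\neq\emptyset}\sum_l c_{I,J}\langle\gamma_{k\in I},\alpha,\beta,\delta_l\rangle\langle\delta_l',\phi,\epsilon,\gamma_{j\in J}\rangle$. The restriction $J\neq\emptyset$ eliminates the partition $(\{1\},\emptyset)$, leaving only $(I,J)=(\emptyset,\{1\})$ with coefficient $1$. This yields the single term $-\sum_l\langle\alpha,\beta,\delta_l\rangle\langle\delta_l',\psi,\epsilon,\gamma\rangle$, matching the third sum of the Corollary.

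There is no real obstacle here; the only sanity check is to verify that the hypotheses of the Reconstruction Lemma apply, namely that $\epsilon$ and $\psi$ (the would-be $\phi$) satisfy $0<\deg_{\mathbb{C}}\epsilon,\deg_{\mathbb{C}}\psi<\hat c$, which is part of the setup for a non-primitive insertion. Once these conditions are noted and the two cases above are assembled, adding the two pieces produces the stated identity, completing the proof.
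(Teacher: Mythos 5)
Your proposal is correct and is exactly the argument the paper intends: the Corollary is the $k=4$ specialization of Lemma \ref{Reconstruction}, where $[k-3]=\{1\}$ gives the two partitions $(\{1\},\emptyset)$ and $(\emptyset,\{1\})$ with $c_{I,J}=1$, and the $J\neq\emptyset$ restriction removes one term from the subtracted sum. The paper states the Corollary without proof, so your enumeration simply makes explicit the routine verification it leaves to the reader.
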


In fact, using the reconstruction lemma, it is possible to write any genus-zero $k$-point correlator in terms of the pairing, genus-zero three-point correlators and correlators of the form $\langle \gamma_1, \ldots, \gamma_k' \rangle$ for $k' \leq k$ where $\gamma_i$ is primitive for $i \leq k' - 2$ (see \cite{FJR1}).

We say that a correlator is \defital{basic} if at most two of the insertions are non-primitive. 

Recall that the \defital{central charge} of the polynomial $W$ is 
$
\hat{c} = \sum_j (1 - 2q_j).
$

\begin{lem}[\cite{FJR1}]\label{all_corrs_lem}
 If $\deg_{\CC}(\alpha) < \hat{c}$ for all classes $\alpha$, $P$ is the maximum complex degree of any primitive class, and $P <1$, then all the genus-zero correlators are uniquely determined by the pairing, the genus-zero three-point correlators,  and the basic genus-zero $k$-point correlators for each $k$ that satisfies
\begin{equation}\label{klimit}
 k \leq 2 + \frac{1 + \hat{c}}{  1-P}.
\end{equation}
\end{lem}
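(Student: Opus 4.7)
The plan is to combine the structural reduction to basic correlators (noted in the paragraph preceding the lemma, following \cite{FJR1}) with the dimension selection rule from Axiom \ref{sel_rules}. By that reduction, every genus-zero correlator is expressible in terms of the pairing, the genus-zero three-point correlators, and basic genus-zero $k'$-point correlators with $k' \leq k$. So, by strong induction on $k$, the lemma reduces to showing that every basic $k$-point correlator must vanish whenever $k > 2 + (1+\hat c)/(1-P)$; the basic correlators within the bound then serve as the required data, and the induction on $k$ handles everything below via the Reconstruction Lemma.

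For the degree count, I would fix a basic correlator $\langle \alpha_1, \dots, \alpha_k\rangle_{0,k}$. By definition at most two of the $\alpha_i$ are non-primitive, so at least $k-2$ of them are primitive and hence satisfy $\deg_{\CC}\alpha_i \leq P$, while the remaining (at most two) satisfy $\deg_{\CC}\alpha_i < \hat c$ by hypothesis. If the correlator is nonzero, Axiom \ref{sel_rules}(2) forces $\sum_i \deg_{\CC}\alpha_i = \hat c + k - 3$, so
$$\hat c + k - 3 \;\leq\; (k-2)P + 2\hat c.$$
Rearranging and using $P < 1$ gives $k(1-P) \leq (1+\hat c) + 2(1-P)$, i.e.\ $k \leq 2 + (1+\hat c)/(1-P)$. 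Contrapositively, basic $k$-point correlators vanish once $k$ exceeds this bound, which together with the reduction completes the induction.

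I expect the main subtle point to be not the calculation itself but verifying that the reduction to basic correlators is genuinely valid under the stated hypotheses. Specifically, Lemma \ref{Reconstruction} requires $0 < \deg_{\CC}\epsilon,\ \deg_{\CC}\phi < \hat c$ for any factorization $\alpha = \epsilon \star \phi$ of a non-primitive insertion; this is where $\deg_{\CC}\alpha < \hat c$ is used, since taking $\epsilon$ to be a primitive factor of $\alpha$ automatically yields $\deg_{\CC}\phi = \deg_{\CC}\alpha - \deg_{\CC}\epsilon < \hat c$. The hypothesis $P < 1$ then ensures the final bound is a finite positive number, so the induction actually terminates. Beyond these consistency checks and the bookkeeping of how basis elements $\delta_l$ appear in successive WDVV expansions, the argument reduces to the one-line degree inequality above.
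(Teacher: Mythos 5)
Your argument is correct and is essentially the intended one: the paper itself gives no proof of this lemma (it is quoted from \cite{FJR1}, with the reduction to basic correlators stated in the preceding paragraph), and the standard argument is exactly your combination of that reduction with the dimension selection rule of Axiom \ref{sel_rules}(2), bounding $\hat{c}+k-3 \leq (k-2)P + 2\hat{c}$ for a nonvanishing basic correlator and rearranging to get the stated bound on $k$. Your side remarks — that $\deg_{\CC}\alpha < \hat{c}$ is what licenses the factorizations required by Lemma \ref{Reconstruction}, and that $P<1$ makes the bound finite — are also the right consistency checks.
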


\begin{example}
In the case of $\sH_{W_{1,0},\langle J \rangle}$, the maximal primitive degree is $7/12$ and $\hat{c} = 7/6$, so the Frobenius manifold structure is determined by the genus-zero three-point correlators and the basic genus-zero $k$-point correlators for $k \leq 7$. 

 All of the three-point correlators can be found using the methods described in \cite{KPABR}.  
There are 165 distinct genus-zero three point correlators.  The selection rules show that 158 of them vanish.  All of the seven remaining correlator values can be found using a pairing axiom found in \cite{FJR}.  
 There are ten concave and two nonconcave four-point correlators which satisfy Axiom \ref{sel_rules}, which might therefore be nonzero.  We can use the values of the concave correlators to find values of the others using Lemma \ref{Reconstruction}.  For example, if we let 
\[
 X = \fjrw{1}{(1/4,1/2)}, Y = \fjrw{1}{(1/2,1/3)}, Z = \fjrw{1}{(3/4,1/6)},\text{ and }W = \fjrw{xy^2}{(0,0)},
 \]
  then the nonconcave four point correlators are $\langle X, W,W,X^2\rangle$ and $\langle Z, Z, W, W \rangle$.  Lemma \ref{Reconstruction} gives
\[
\langle X,W,W,X^2 \rangle  =
\ds{\sum_{\delta}2\langle X, W, X, \delta \rangle \langle \delta^\prime, W, X\rangle
 -\langle  W, W, \delta \rangle \langle \delta^\prime, X, X, X\rangle}.
 \]
 The selection rules, methods in \cite{KPABR}, and computations from Example \ref{ExampleConcave} reduce this to 
 \[
 \langle X,W,W,X^2 \rangle  
 =-\langle W,W, \one\rangle \langle XY^2, X, X, X \rangle
 \ds{ = -\frac{1}{24} \cdot \frac{1}{6} = -\frac{1}{144}}.
\]

To find the value of $\langle Z, Z, W, W \rangle$ we have to look for ways to reconstruct it using five-point correlators.  
\[\begin{array}{rl}
\langle Y,W,W,XY, XZ \rangle  &=
\langle X,Y,W,XZ\rangle \langle X,Z,W,XY\rangle + \langle X,W,W,X^2 \rangle \langle Y,Z,Z,XY\rangle \\
&\quad -\langle W,XY,XZ\rangle\langle X,X,Y,Z,W\rangle - \langle Y,Y,W,XY\rangle \langle X,Z,W,XY\rangle.
 \end{array}
\]
The correlators $\langle X, Z, W, XY \rangle$ and $\langle W,XY,XZ\rangle$ are both equal to zero, which can be shown using the methods in \cite{KPABR}.  
The correlators $\langle Y,Z,Z,XY \rangle$, and $\langle X,W,W,X^2 \rangle$ are both concave.  Using Lemma \ref{4pt_concave}, it is a straightforward calculation to find their values, which are  $\frac{1}{6}$ and $\frac{1}{4}$, respectively.
  So we have 
\begin{equation}\label{ex_reconst1}
\langle Y,W,W,XY, XZ \rangle  =\langle X,W,W,X^2 \rangle \langle Y,Z,Z,XY\rangle = -\frac{1}{144} \cdot \frac{1}{4} = -\frac{1}{576}.
\end{equation}

We can reconstruct this same correlator in a different way:
\begin{equation}\label{ex_reconst2}
\begin{array}{rl}
\langle W,Y,XY,W, XZ \rangle  &=
\langle X,Y,W,XZ\rangle \langle Y,Z,W,X^2\rangle + \langle X,Y,X^2,XY \rangle \langle Z,Z,W,W\rangle \\
&\qquad -\langle W,XY,XZ\rangle\langle X,X,Y,Z,W\rangle - \langle Y,Y,W,XY\rangle \langle X,Z,W,XY\rangle\\
&=\langle X,Y,X^2,XY \rangle \langle Z,Z,W,W\rangle = \frac{1}{6}\langle Z,Z,W,W\rangle.  \\
 \end{array}
\end{equation}

Together, Equations \ref{ex_reconst1} and \ref{ex_reconst2} tell us that  $\langle Z,Z,W,W\rangle  =  -\frac{1}{96}$.

To compute the full Frobenius manifold structure of $\sH_{W_{1,0}, \langle J \rangle}$, we still need to find the values of all basic  five, six, and seven-point correlators.  There are fifteen basic five-point correlators, seven of which are not concave.  There are five basic six-point correlators, three of which are not concave, and there is one basic seven-point correlator, which is not concave.  All of these correlators can be computed using the methods established here. Their values are given in Section \ref{sec_done}.
\end{example}

\begin{remark}In certain examples, if we use Lemma \ref{Reconstruction} together with the computed values of concave genus-zero higher point correlators, we can find values of previously unknown three-point correlators. This allows us to find even Frobenius algebra structures that were previously unknown.  
\end{remark}

\section{Summary of Computations}
We compute the FJRW theories with all possible admissible symmetry groups coming from the quasihomogeneous polynomial representatives corresponding to an isolated singularity in Arnol'd's \cite{Ar} list of singularities.  Recall that there are potentially several polynomial representatives corresponding to a singularity.
We note here that the polynomial representatives of $X_9$ and $J_{10}$ which appear in this list of singularities \cite{Ar} were not considered in \cite{shenkrawitz} for any choice of symmetry group.  The non-maximal symmetry groups for $P_8$, also, have not yet been treated.

\subsection{ FJRW Theories which split into known tensor products}\label{sec_splits}
Recall that if a polynomial $W$ can be written $W = W_1+W_2$, where $W_1 \in \C[x_1, \ldots, x_l]$ and $W_2 \in \C[x_{l+1}, \ldots, x_n]$, and if $G$ can be written $G = G_1 \oplus G_2$, where for each $i$, $G_i$ is an admissible symmetry group for $W_i$, then then the A-model associated to the pair $(W,G)$ splits into the tensor product of the A-models for the pairs $(W_1, G_1)$ and $(W_2,G_2)$.  

The ADE singularities (except for $D_4$) with all possible symmetry groups were computed in \cite{FJR}.  The $D_4$ case was done in \cite{D4}.  For the following pairs of polynomial and group, the A-model structure splits into a product of known FJRW theories.  We use the notation $\sA_{W,G}$ to denote the full FJRW theory associated to the pair $(W,G)$. Symmetry groups which are maximal are denoted with the subscript $max$.

{\small

\begin{center}
\begin{tabular}{| c | l | c | c | c |}
\hline
$W$ & $G$ & dimension & A-models  \\\hline
$X_9 = x^4 + y^4$ &  $\langle (1/4,0),(0,1/4)\rangle_{max}$ &9 &  $\sA_{A_3,J}\otimes\sA_{A_3,J} $ \\
$J_{10} = x^3+y^6$ &  $\langle (1/3,0),(0,1/6)\rangle_{max}$ &10 &  $ \sA_{A_2,J} \otimes \sA_{A_5,J}$ \\
$Q_{12}= x^3+y^5+yz^2$ &  $\langle (1/3,0,0),(0,1/5,2/5)\rangle$ &12 &  $\sA_{A_2,J} \otimes \sA_{D_6,J}$ \\
$J_{3,0} = x^3+y^9$ & $ \langle (1/3,0),(0,1/9)\rangle_{max} $&16 &  $\sA_{A_2,J} \otimes \sA_{A_8,J}$ \\
$W_{1,0} = x^4+y^6$ &  $\langle (1/4,0),(0,1/6)\rangle_{max}$ &15 &  $\sA_{A_3,J} \otimes \sA_{A_5,J}$ \\
$E_{18} = x^3+y^{10}$ &  $\langle (1/3,0),(0,1/10)\rangle_{max}$ &18 &  $\sA_{A_2,J} \otimes \sA_{A_9,J}$ \\
$E_{20} = x^3+y^{11}$ &  $\langle (1/3,0),(0,1/11)\rangle_{max}$ &20 &  $\sA_{A_2,J} \otimes \sA_{A_{10},J}$ \\
$U_{16} = x^3+xz^2+y^5$ &  $\langle (1/3,0,5/6),(0,1/5,0)\rangle_{max}$ &20 &  $\sA_{D_4,G^{max}} \otimes \sA_{A_4,J}$ \\
$U_{16}= x^3+xz^2+y^5$ &  $\langle (1/3,0,1/3),(0,1/5,0)\rangle$ &16 &  $\sA_{D_4,J} \otimes \sA_{A_4,J}$ \\
$U_{16}^T= x^3z+z^2+y^5$ &  $\langle (1/6,0,1/2),(0,1/5,0)\rangle_{max}$ &16 &  $\sA_{D_4^T,J}\otimes \sA_{A_4,J}$ \\
$W_{18} = x^4+y^7$ &  $\langle (1/4,0),(0,1/7)\rangle_{max}$ &18 &  $\sA_{A_3,J}\otimes \sA_{A_6,J}$ \\
$Q_{16} = x^3+yz^2+y^7$ &  $ \langle (1/3,0,0),(0,1/7,13/14)\rangle_{max}$ &26 &  $\sA_{A_2,J}\otimes \sA_{D_8,G^{max}}$ \\
$Q_{16}= x^3+yz^2+y^7$ &  $ \langle (1/3,0,0),(0, 1/7,3/7)\rangle$ &16 &  $\sA_{A_2,J}\otimes \sA_{D_8,J}$ \\
$Q_{16}^T= x^3+z^2+y^7z$ &  $\langle (1/3,0,0),(0,1/14,1/2)\rangle_{max}$ &16 &  $\sA_{A_2,J}\otimes \sA_{D_8^T,J}$ \\
$Q_{18}= x^3+yz^2+y^8$ &  $\langle (1/3,0,0),(0,1/8,7/16)\rangle_{max}$ &18 &  $\sA_{A_2,J} \otimes \sA_{D_9,J}$ \\
\hline
\end{tabular} 
\end{center}

}

\subsection{Previously Unknown FJRW Theories we can compute}\label{sec_done}
Recall that in Lemma \ref{all_corrs_lem} we saw that the Frobenius manifold structure can be found from the genus-zero three-point correlators, together with the genus-zero $k$-point correlators for $k$ as in Equation \ref{klimit}.   The value of $k$ depends on the central charge $\hat{c}$ of the polynomial and on the maximum degree $P$ of any primitive element in the state space.  The value of $k$ is given for each of the A-models below, together with all necessary correlators.
Correlators marked with a $\ast$ must be found using the Reconstruction Lemma.  Unmarked $k$-point correlators for $k >3$ can be found using the concavity axiom.  

{\small 

{\centering

\noindent\begin{tabular}{||c|c|c||}
\hline\hline
\multicolumn{3}{||c||}{$P_8 = x^3+y^3+z^3, \quad G = \langle \gamma = (1/3,0,2/3), \gamma_2 = (0,1/3,2/3)\rangle$} \\\hline\hline
\multicolumn{3}{||c||}{$ k \leq 6$, $X = e_{0}, Y =xyze_{0}$. $P = 1/2, \hat{c} = 1$}\\
\multicolumn{3}{||c||}{Relations: $X^2=Y^2 = 0$;  $\dim = 4$ }\\\hline
 	\begin{tabular}{c}
 	3 pt correlators
 	\\ \hline
 	$\langle \mb{1}, \mb{1}, XY \rangle = 1/27$\\
 	$\langle \mb{1}, X, Y \rangle = 1/27$\\
	 \end{tabular}
 &
		\begin{tabular}{c}
		4 pt correlators
	 	\\ \hline
		None. \\
		\\\hline
		5pt correlators \\\hline
		None. 
		\end{tabular}
 &
 		\begin{tabular}{c}
		6 pt correlators
 		\\ \hline
		$\langle X,X,Y,Y,XY,XY \rangle_\ast = 0$
		\end{tabular}\\\hline \hline
\end{tabular}

\noindent\begin{tabular}{||c|c|c||}
\hline\hline
\multicolumn{3}{||c||}{$P_8 = x^3+y^3+z^3, \quad G = \langle J = (1/3,1/3,1/3)\rangle$} \\\hline\hline
\multicolumn{3}{||c||}{$k \leq 6 $, $X = e_{0}, Y =xyze_{0} $. $P = 1/2, \hat{c} = 1$ }\\
\multicolumn{3}{||c||}{Relations: $X^2=Y^2 = 0$;  $\dim = 4$ }\\\hline
\hline
 	\begin{tabular}{c}
 	3 pt correlators
 	\\ \hline
 	$\langle \mb{1}, \mb{1}, XY \rangle = 1/27$\\
 	$\langle \mb{1}, X, Y \rangle = 1/27$\\
	 \end{tabular}
 &
		\begin{tabular}{c}
		4 pt correlators
	 	\\ \hline
		None. \\
		\\\hline
		5pt correlators \\\hline
		None. 
		\end{tabular}
 &
 		\begin{tabular}{c}
		6 pt correlators
 		\\ \hline
		None.
		\end{tabular}\\\hline \hline
\end{tabular}

\noindent\begin{tabular}{||c|c|c||}
\hline\hline
\multicolumn{3}{||c||}{$X_9 = x^4 + y^4, \quad G = \langle J = (1/4,1/4), \gamma = (0,1/2)\rangle$} \\\hline\hline
\multicolumn{3}{||c||}{$k \leq 6 $, $X = xye_{0}, Y =e_{J+\gamma}, Z = e_{2J},W =  e_{3J+\gamma} $. $P = 1/2, \hat{c} = 1$ }\\
\multicolumn{3}{||c||}{Relations:  $16X^2 = YW = Z^2, X^3 = Z^3 = Y^2 = W^2 = 0$. $\dim = 6$  }\\
\hline
 	\begin{tabular}{c}
 	3 pt correlators
 	\\ \hline
 	$\langle \mb{1}, \mb{1}, Z^2 \rangle = 1$\\
 	$\langle \mb{1}, Y,W\rangle = 1$\\
 	$\langle \mb{1}, Z,Z \rangle = 1$\\
 	$\langle \mb{1}, X,X \rangle = \frac{1}{16}$\\\\
		4 pt correlators
	 	\\ \hline
		$\langle Y,Y,Y,W \rangle = 0$\\
		$\langle Y,Y,Z,Z \rangle = 1/4$\\
		$\langle Y,W,W,W \rangle = 0$\\
		$\langle Z,Z,W,W \rangle =1/4$\\
		$\langle X,X,Y,Y \rangle_\ast =-1/64$\\
	 \end{tabular}

 &
		\begin{tabular}{c}
		5 pt correlators
	 	\\ \hline
		$\langle Y,Y,Y,Y,Z^2 \rangle = \frac{1}{8}$\\
		$\langle Y,Y,W,W,Z^2 \rangle =0$\\
		$\langle Y,Z,Z,W,W^2 \rangle=  \frac{1}{16}$\\
		$\langle Z,Z,Z,Z,W^2 \rangle = \frac{1}{16}$\\
		$\langle W,W,W,W,W^2 \rangle = \frac{1}{8}$\\
		$\langle X,X,X,X,W^2 \rangle_\ast = \frac{1}{4096}$\\
		$\langle X,X,Y,W,W^2 \rangle_\ast = \frac{1}{256}$\\
		$\langle X,X,Z,Z,W^2 \rangle_\ast = -\frac{1}{256}$\\\\\\\\
		\end{tabular}
 &
 		\begin{tabular}{c}
		6 pt correlators
 		\\ \hline
		$\langle Y,Y,Y,W,W^2,W^2 \rangle = 0$\\
		$\langle Y,Y,Z,Z,W^2,W^2 \rangle = \frac{1}{32}$\\
		$\langle Y,W,W,W,W^2,W^2 \rangle = 0$\\
		$\langle Z,Z,W,W,W^2,W^2 \rangle = \frac{1}{32}$\\
		$\langle X,X,Y,Y,W^2,W^2 \rangle_\ast = -\frac{1}{512}$\\
		$\langle X,X,Z,Z,W^2,W^2 \rangle_\ast = -\frac{1}{512}$\\\\\\\\\\\\
		\end{tabular}\\\hline \hline
\end{tabular}

\noindent\begin{tabular}{||c|c|c||}
\hline\hline
\multicolumn{3}{||c||}{$X_9 = x^4 + y^4, \quad G = \langle J = (1/4,1/4)\rangle$} \\\hline\hline
\multicolumn{3}{||c||}{$k \leq 6 $, $X = y^2e_{0}, Y =xye_{0}, Z = x^2 e_0, W=e_{2J} $. $P = 1/2, \hat{c} = 1$ }\\
\multicolumn{3}{||c||}{Relations:  $16XZ = 16Y^2 = W^2, X^2 = Z^2 = Y^3=W^3=0$. $\dim = 6$  }\\
\hline
 	\begin{tabular}{c}
 	3 pt correlators
 	\\ \hline
 	$\langle \mb{1}, \mb{1}, W^2 \rangle = 1$\\
 	$\langle \mb{1}, W,W \rangle = 1$\\
 	$\langle \mb{1}, X,Z \rangle = \frac{1}{16}$\\
 	$\langle \mb{1}, Y,Y \rangle = \frac{1}{16}$\\\\
		4 pt correlators
	 	\\ \hline
		$\langle X,X,Y,W \rangle_\ast = a\neq 0$\\
		$\langle Y,Z,Z,W \rangle_\ast =-\frac{1}{8192a}$\\\\
	 \end{tabular}

 &
		\begin{tabular}{c}
		5 pt correlators
	 	\\ \hline
		$\langle W,W,W,W,W^2 \rangle = \frac{1}{16}$\\
		$\langle X,X,X,X,W^2 \rangle_\ast =32a^2$\\
		$\langle X,X,Z,Z,W^2 \rangle_\ast =  0$\\
		$\langle X,Y,Y,Z,W^2 \rangle_\ast = -\frac{1}{4096}$\\
		$\langle X,Z,W,W,W^2 \rangle_\ast = -\frac{1}{256}$\\
		$\langle Y,Y,Y,Y,W^2 \rangle_\ast = \frac{1}{4096}$\\
		$\langle Y,Y,W,W,W^2 \rangle_\ast = -\frac{1}{256}$\\
		$\langle Z,Z,Z,Z,W^2 \rangle_\ast = \frac{1}{134217728a^2}$\\\\
		\end{tabular}
 &
 		\begin{tabular}{c}
		6 pt correlators
 		\\ \hline
		$\langle X,X,Y,W,W^2,W^2 \rangle_\ast = -\frac{a}{8}$\\
		$\langle Y,Z,Z,W,W^2,W^2 \rangle_\ast = \frac{1}{65536a}$\\\\\\\\\\\\\\\\
		\end{tabular}\\\hline \hline
\end{tabular}


 \begin{tabular}{||c|c||}
\hline\hline
\multicolumn{2}{||c||}{$J_{10}=x^3+y^6, \quad G=\langle J = (1/3,1/6)\rangle$} \\\hline\hline
\multicolumn{2}{||c||}{$k \leq 6 $, $X = y^3e_{0}, Y =xye_{0}, Z = e_{4J},W =  e_{2J} $. $P = 1/2, \hat{c} = 1$ }\\
\multicolumn{2}{||c||}{Relations:  $18XY = ZW, X^2=Y^2=Z^2=W^2 = 1$, $\dim = 6$  }\\
\hline
	\begin{tabular}{c}
 	3 pt correlators
 	\\ \hline
 	$\langle \mb{1}, \mb{1}, ZW \rangle = 1$\\
 	$\langle \mb{1}, Z,W\rangle = 1$\\
 	$\langle \mb{1}, X,Y \rangle = \frac{1}{18}$\\\\
		4 pt correlators
	 	\\ \hline
		$\langle Z,Z,Z,W \rangle = 1/6$\\
		$\langle W,W,W,W \rangle = 1/3$\\
		$\langle X,X,X,W \rangle_\ast = a \neq 0$\\
		$\langle X,Y,Z,Z \rangle_\ast =-\frac{1}{108}$\\
		$\langle Y,Y,Y,W \rangle_\ast =-\frac{1}{104,976a}$\\
	 \end{tabular}

 &
		\begin{tabular}{c}
		5 pt correlators
	 	\\ \hline
		$\langle Y,Y,Y,Y,ZW \rangle = \frac{1}{9}$\\
		$\langle Y,Z,Z,Z,ZW \rangle =\frac{1}{18}$\\
		$\langle X,X,X,Z,ZW \rangle_\ast=  -\frac{a}{3}$\\
		$\langle X,Y,W,W,ZW \rangle_\ast = -\frac{1}{324}$\\
		$\langle Y,Y,Y,Z,ZW \rangle_\ast = \frac{1}{314,928a}$\\\\
		6 pt correlators
 		\\ \hline
		$\langle Z,Z,W,W,ZW,ZW \rangle = \frac{1}{54}$\\
		$\langle X,X,Y,Y,ZW,ZW \rangle_\ast = \frac{1}{17,496}$\\
		$\langle X,Y,Z,W,ZW,ZW \rangle_\ast = \frac{1}{1944}$\\
		\end{tabular}\\\hline \hline
\end{tabular}

\begin{tabular}{||c|c||}
\hline\hline
\multicolumn{2}{||c||}{$Z_{13}^T=x^3+xy^4, \quad G = \langle J = (1/3,1/9)\rangle$} \\\hline\hline
\multicolumn{2}{||c||}{$k \leq 6 $, $X = e_{4J}, Y =e_{2J}, Z = y^5e_0, W = xy^2e_0$. $P = 5/9, \hat{c} = 10/9$. }\\
\multicolumn{2}{||c||}{Relations: $X^2Y=-6Z^2=18W^2, X^3 = Y^2 = Z^3 = W^3 = 0$, $\dim = 8$.  }\\
\hline
\begin{tabular}{c}
{3 pt correlators}
\\ \hline
\begin{tabular}{cc}
$\langle \one, \one, X^2Y\rangle= 1$&
$\langle \one, X, XY\rangle= 1$\\ 
$\langle \one, Y, X^2\rangle= 1$&
$\langle X, X, Y\rangle= 1$\\ 
$\langle \one, Z, Z\rangle= -1/6$&
$\langle \one, W, W\rangle= 1/18$\\ 
\end{tabular}\\\hline
{5 pt correlators}
\\ \hline
{$\langle X,Y,Y,XY,X^2Y \rangle=1/27$}\\
{$\langle X,X,Z,XY,X^2Y \rangle=1/27$}\\
{$\langle X,Y,Z,X^2,X^2Y \rangle_\ast=0$}\\
{$\langle X,Z,Z,Z,X^2Y \rangle_\ast=0$}\\
{$\langle X,Z,W,W,X^2Y \rangle_\ast=-2a/9$}\\
{$\langle Y,Y,Y,X^2,X^2Y \rangle_\ast=-a/162$}\\
{$\langle Y,Y,Z,Z,X^2Y \rangle_\ast=1/162$}\\
{$\langle Y,Y,W,W,X^2Y \rangle_\ast=-1/486$}\\
{$\langle Y,Z,Z,XY,XY \rangle_\ast=1/162$}\\
{$\langle Y,W,W,XY,XY \rangle_\ast=-1/486$}\\
{$\langle Z,Z,Z,X^2,XY \rangle_\ast=-2a/9$}\\
{$\langle Z,W,W,X^2,XY \rangle_\ast=-a/162$}\\\\
\end{tabular}&
\begin{tabular}{c}
4 pt correlators
\\ \hline
$\langle X,X,X,X^2Y \rangle = 1/9$\\
$\langle X,X,X^2,XY \rangle = 2/9$\\
$\langle X,Y,X^2,X^2 \rangle = 1/9$\\
$\langle Y,Y,Y,XY \rangle =1/3$\\
$\langle X,Y,Z,XY \rangle_\ast =0$\\
$\langle X,Z,Z,X^2 \rangle_\ast =1/54$\\
$\langle X,W,W,X^2 \rangle_\ast =-1/162$\\
$\langle Y,Y,Z,X^2 \rangle_\ast =0$\\
$\langle X,Z,Z,Z \rangle_\ast =a = \pm 1/9$\\
$\langle X,Z,W,W \rangle_\ast =a/36$\\\hline
{6 pt correlators}
\\ \hline
{$\langle X,Y,Z,Z,X^2Y,X^2Y \rangle_\ast=-1/1458$}\\
{$\langle X,Y,W,W,X^2Y,X^2Y \rangle_\ast=1/4374$}\\
{$\langle Y,Y,Y,Z,X^2Y,X^2Y \rangle_\ast=0$}\\
{$\langle Z,Z,Z,Z,XY,X^2Y \rangle_\ast=1/2916$}\\
{$\langle Z,Z,W,W,XY,X^2Y \rangle_\ast=-1/26244$}\\
{$\langle W,W,W,W,XY,X^2Y \rangle_\ast=1/26244$}
\end{tabular}\\
\hline\hline
\end{tabular}

\begin{tabular}{||c|c||}
\hline\hline
\multicolumn{2}{||c||}{$J_{3,0}=x^3+y^9, \quad G =\langle J = (1/3,1/9)\rangle$} \\\hline\hline
\multicolumn{2}{||c||}{$k \leq 6 $, $X = e_{4J}, Y =e_{2J}, Z = y^5e_0, W = xy^2e_0$. $P = 5/9, \hat{c} = 10/9$. }\\
\multicolumn{2}{||c||}{Relations:  $27ZW = X^2Y $, $ab = -1/531441$, $X^3 = Y^2=Z^2 = W^2 = 0$, $\dim = 8$.  }\\
\hline
 	\begin{tabular}{c}
{3 pt correlators}
 	\\ \hline
	\begin{tabular}{cc}
$\langle \one, \one, X^2Y\rangle= 1$&
$\langle \one, X, XY\rangle= 1$\\ 
$\langle \one, Y, X^2\rangle= 1$&
$\langle \one, Z, W\rangle= 1/27$\\ 
$\langle X, X, Y\rangle= 1$&
\end{tabular}\\\\
4 pt correlators
	 	\\ \hline
		$\langle X,X,X,X^2Y \rangle = 1/9$\\
		$\langle X,X,X^2,XY \rangle = 2/9$\\
		$\langle X,Y,X^2,X^2 \rangle = 1/9$\\
		$\langle X,Z,W,X^2 \rangle_\ast = -1/243$\\
		$\langle Y,Y,Y,XY \rangle =1/3$\\
		$\langle Y,Z,Z,Z \rangle =a$\\
		$\langle Y,W,W,W \rangle =b$\\\\\hline
\end{tabular}&
\begin{tabular}{c}
		5 pt correlators
	 	\\ \hline
		$\langle X,Y,Y,XY,X^2Y \rangle = 1/27$\\
		$\langle X,Z,Z,Z,X^2Y \rangle_\ast = -2a/9$\\
		$\langle X,W,W,W,X^2Y \rangle_\ast = -2b/9$\\
		$\langle Y,Y,Y,X^2,X^2Y \rangle =1/27$\\
		$\langle Y,Y,Z,W,X^2Y \rangle_\ast =-1/729$\\
		$\langle Y,Z,W,XY,XY \rangle_\ast =-1/729$\\
		$\langle Z,Z,Z,X^2,XY \rangle_\ast =-2a/9$\\
		$\langle W,W,W,X^2,XY \rangle_\ast =-2b/9$\\
		6 pt correlators
	 	\\ \hline
		$\langle X,Y,Z,W,X^2Y,X^2Y \rangle_\ast = 1/6561$\\
		$\langle Z,Z,W,W,XY,X^2Y \rangle_\ast = 4/177147$\\

		\\\end{tabular}\\
\hline\hline
\end{tabular}

\begin{tabular}{||c|c||}
\hline\hline
\multicolumn{2}{||c||}{$Z_{1,0}=x^3y+y^7, \quad G^{max} = \langle  \gamma = (1/21,6/7)\rangle$} \\\hline\hline
\multicolumn{2}{||c||}{$k \leq 5 $, $X = e_{5\gamma}, Y =e_{13\gamma}$. $P = 1/3, \hat{c} = 8/7$. }\\
\multicolumn{2}{||c||}{Relations:  $X^7 = -3Y^2, X^{13} = Y^3 = 0 $, $\dim = 19$.  }\\
\hline
 	\begin{tabular}{cc}
 	\multicolumn{2}{c}{3 pt correlators}
 	\\ \hline
$\langle \one, \one, X^5Y^2\rangle= 1$&
$\langle \one, X, X^4Y^2\rangle= 1$\\ 
$\langle \one, X^2, X^3Y^2\rangle= 1$&
$\langle \one, X^3, X^2Y^2\rangle= 1$\\ 
$\langle \one, Y, X^5Y\rangle= 1$&
$\langle \one, X^4, XY^2\rangle= 1$\\ 
$\langle \one, XY, X^4Y\rangle= 1$&
$\langle \one, X^5, Y^2\rangle= 1$\\ 
$\langle \one, X^2Y, X^3Y\rangle= 1$&
$\langle \one, X^6, X^6\rangle= -3$\\ 
$\langle X, X, X^3Y^2\rangle= 1$&
$\langle X, X^2, X^2Y^2\rangle= 1$\\ 
$\langle X, X^3, XY^2\rangle= 1$&
$\langle X, Y, X^4Y\rangle= 1$\\ 
$\langle X, X^4, Y^2\rangle= 1$&
$\langle X, XY, X^3Y\rangle= 1$\\
$\langle X, XY, X^2Y\rangle= 1$& 
$\langle X, X^5, X^6\rangle= -3$\\ 
$\langle X, X^2Y, X^2Y\rangle= 1$& 
$\langle X^2, X^2, XY^2\rangle= 1$\\ 
$\langle X^2, X^3, Y^2\rangle= 1$& 
$\langle X^2, Y, X^3Y\rangle= 1$\\
$\langle X^2, X^4, X^6\rangle= -3$&
$\langle X^2, XY, X^2Y\rangle= 1$\\
$\langle X^2, X^5, X^5\rangle_\ast= -3$&
$\langle X^3, X^3, X^6\rangle= -3$\\
$\langle X^3, Y, X^2Y\rangle= 1$&
$\langle X^3, X^4, X^5\rangle_\ast= -3$\\
$\langle X^3, XY, XY\rangle= 1$&
$\langle Y, Y, X^5\rangle= 1$\\  
$\langle Y,  X^4,XY\rangle= 1$&
$\langle X^4,  X^4,X^4\rangle_\ast= -3$\\  
\end{tabular}&
\begin{tabular}{c}
4 pt correlators
	 	\\ \hline
		$\langle X,X,X^5Y,X^5Y^2 \rangle = 1/7$\\
		$\langle X,Y,X^6,X^5Y^2 \rangle_\ast = 1/7$\\
		$\langle X,Y,Y^2,X^4Y^2 \rangle = -1/21$\\
		$\langle X,Y,XY^2,X^3Y^2 \rangle =-1/21$\\
		$\langle X,Y,X^2Y^2,X^2Y^2 \rangle = -1/21$\\
		$\langle Y,Y,Y,X^5Y^2 \rangle =2/7$\\
		$\langle Y,Y,XY,X^4Y^2 \rangle =5/21$\\
		$\langle Y,Y,X^2Y,X^3Y^2 \rangle =4/21$\\
		$\langle Y,Y,X^3Y,X^2Y^2 \rangle =1/7$\\
		$\langle Y,Y,X^4Y,XY^2 \rangle =2/21$\\
		$\langle Y,Y,Y^2,X^5Y \rangle =1/21$\\\\\hline
		5 pt correlators
	 	\\ \hline
		None\\
\end{tabular}\\
\hline\hline
\end{tabular}

\begin{tabular}{||c|c||}
\hline\hline
\multicolumn{2}{||c||}{$Z_{1,0}=x^3y+y^7, \quad G =\langle J = (2/7,1/7)\rangle$} \\\hline\hline
\multicolumn{2}{||c||}{$k \leq 7 $, $X = e_{4J}, Y =e_{2J}, Z = y^4e_0, W = xy^2e_0$. 
$P = 4/7, \hat{c} = 8/7$. }\\
\multicolumn{2}{||c||}{Relations:  $X^3 = -3Y^2 $, $21ZW = XY^2$, $a \neq 0$, $X^5 = Y^3 = Z^2 = W^2 = 0$, $\dim = 9$.  }\\
\hline
 	\begin{tabular}{c}
 	{3 pt correlators}
 	\\ \hline
	\begin{tabular}{cc}
$\langle \one, \one, XY^2\rangle= 1$&
$\langle \one, X, Y^2\rangle= 1$\\ 
$\langle \one, Y, XY\rangle= 1$&
$\langle \one, Z, W\rangle= 1/21$\\ 
$\langle \one, X^2, X^2\rangle= -3$&
$\langle X, Y, Y\rangle= 1$\\
$\langle X, X, X^2\rangle= -3$\\ 
\end{tabular}\\\\\\
4 pt correlators
	 	\\ \hline
		$\langle X,X,Y,XY^2 \rangle = 1/7$\\
		$\langle X,X,XY,Y^2 \rangle = 2/7$\\
		$\langle X,Y,XY,XY \rangle = 1/7$\\
		$\langle X,Y,X^2,Y^2 \rangle_\ast = 3/7$\\
		$\langle X,Z,W,XY \rangle_\ast = -1/147$\\
		$\langle Y,Y,Y,Y^2 \rangle =1/7$\\
		$\langle Y,Y,X^2,XY \rangle_\ast =0$\\
		$\langle Y,Z,Z,Z\rangle_\ast =a$\\
		$\langle Y,Z,W,X^2 \rangle_\ast =-1/147$\\
		$\langle Y,W,W,W \rangle_\ast =-1/(194,481a)$\\
\end{tabular}&
\begin{tabular}{c}
		5 pt correlators
	 	\\ \hline
		$\langle X,X,X,XY^2,XY^2 \rangle = 2/49$\\
		$\langle X,Y,Y,Y^2,XY^2 \rangle = 0$\\
		$\langle X,Z,Z,Z,XY^2 \rangle_\ast = -2a/7$\\
		$\langle X,Z,W,X^2,XY^2 \rangle_\ast = 1/3087$\\
		$\langle X,Z,W,Y^2,Y^2 \rangle_\ast = 1/1029$\\
		$\langle X,W,W,W,XY^2 \rangle_\ast = -2l/7$\\
		$\langle Y,Y,Y,XY,XY^2 \rangle =2/49$\\
		$\langle Y,Y,Z,W,XY^2 \rangle_\ast =-2/1029$\\
		$\langle Y,Z,W,XY,Y^2\rangle_\ast =-1/1029$\\
		$\langle Z,Z,X,X^2,Y^2\rangle_\ast =-2a/7$\\
		$\langle Z,Z,Z,XY,XY\rangle_\ast =-2a/7$\\
		$\langle Z,Z,W,W,Y^2\rangle_\ast =-2/64827$\\
		$\langle W,W,W,X^2,Y^2\rangle_\ast =-2/(1361367k)$\\
		$\langle W,W,W,XY,XY\rangle_\ast =-2l/7$\\\\\hline
		6 pt correlators
	 	\\ \hline
		$\langle X,Y,Z,W,XY^2,XY^2 \rangle_\ast = 5/7203$\\
		$\langle Y,Z,Z,Z,Y^2,XY^2 \rangle_\ast = 6a/49$\\
		$\langle Y,W,W,W,Y^2,XY^2 \rangle_\ast = 6a/49$\\
		$\langle Z,Z,W,W,XY,XY^2 \rangle_\ast = 4/151263$\\ \\\hline
		7 pt correlators
	 	\\ \hline
		None\\
		\end{tabular}\\
\hline\hline
\end{tabular}

\begin{tabular}{||c|c||}
\hline\hline
\multicolumn{2}{||c||}{$W_{1,0}=x^4+y^6, \quad G =\langle  J = (1/4,1/6)\rangle$} \\\hline\hline
\multicolumn{2}{||c||}{$k \leq 7 $, $X = e_{9J}, Y =e_{2J}, Z = e_{7J}, W = xy^2 e_0$. $P = 7/12, \hat{c} = 7/6$. }\\
\multicolumn{2}{||c||}{Relations:  $XZ = Y^2 $, $X^2Z = 24W^2$, $X^3 = Y^3 = Z^2 = W^3 = 0$, $\dim = 9$.  }\\
\hline
 	\begin{tabular}{c}
 	{3 pt correlators}
 	\\ \hline
	\begin{tabular}{cc}
$\langle \one, \one, XY^2\rangle= 1$&
$\langle \one, X, Y^2\rangle= 1$\\ 
$\langle \one, Y, XY\rangle= 1$&
$\langle \one, Z, X^2\rangle= 1$\\ 
$\langle \one, W, W\rangle= 1/24$&
$\langle X, X, Z\rangle= 1$\\
$\langle X, Y, Y\rangle= 1$\\ 
\end{tabular}\\
5 pt correlators
	 	\\ \hline
		$\langle X,X,Z,XZ,X^2Z \rangle = 0$\\
		$\langle X,Y,Y,Y^2,XY^2 \rangle = 1/24$\\
		$\langle X,Y,Z,XY,XY^2 \rangle = 1/24$\\
		$\langle X,Z,Z,X^2,X^2Z \rangle = 0$\\
		$\langle X,Z,W,W,X^2Z \rangle_\ast = 1/576$\\
		$\langle X,W,W,XZ,XZ \rangle_\ast = 0$\\
		$\langle Y,Y,Y,XY,XY^2 \rangle =1/24$\\
		$\langle Y,Y,Z,X^2,X^2Z \rangle =1/24$\\
		$\langle Y,Y,W,W,XY^2 \rangle_\ast =-1/576$\\
		$\langle Y,W,W,XY,Y^2 \rangle_\ast =-1/576$\\
		$\langle Z,Z,Z,Z,X^2Z \rangle =1/8$\\
		$\langle Z,Z,Z,XZ,XZ \rangle =1/8$\\
		$\langle Z,W,W,X^2,XZ \rangle_\ast =1/576$\\
		$\langle Z,W,W,XY,XY\rangle_\ast =0$\\
		$\langle W,W,W,W,XZ \rangle_\ast =1/13824$\\\\\hline
7 pt correlators
	 	\\ 
		\hline
		$\langle Z,W,W,W,W,X^2Z,X^2Z \rangle_\ast = 1/55296$\\
		\hline
\end{tabular}&
\begin{tabular}{c}
4 pt correlators
	 	\\ \hline
		$\langle X,X,X,XY^2 \rangle = 1/6$\\
		$\langle X,X,X^2,Y^2 \rangle = 1/6$\\
		$\langle X,X,XY,XY \rangle = 1/3$\\
		$\langle X,Y,X^2,XY \rangle = 1/6$\\
		$\langle X,Z,Z,XZ \rangle = 0$\\
		$\langle X,Z,X^2,X^2 \rangle = 0$\\
		$\langle X,W,W,X^2 \rangle_\ast = -1/144$\\
		$\langle Y,Y,Z,Y^2 \rangle =1/4$\\
		$\langle Y,Y,X^2,X^2 \rangle =1/6$\\
		$\langle Y,Z,Z,XY \rangle =1/4$\\
		$\langle Z,Z,Z,X^2 \rangle =0$\\
		$\langle Z,Z,W,W \rangle_\ast =-1/96$\\\\\hline
6 pt correlators
	 	\\ \hline
		$\langle X,Z,Z,Z,X^2Z,X^2Z \rangle = 0$\\
		$\langle X,X,W,W,X^2Z,X^2Z \rangle_\ast = -1/1728$\\
		$\langle Y,Y,Z,Z,XY^2,XY^2 \rangle = 1/24$\\
		$\langle Z,Z,W,W,XZ,X^2Z \rangle_\ast = -1/1152$\\
		$\langle W,W,W,W,X^2,X^2Z \rangle_\ast = -1/27648$\\\\\\
\end{tabular}\\
\hline\hline
\end{tabular}

\begin{tabular}{||c|c||}
\hline\hline
\multicolumn{2}{||c||}{$Q_{2,0}=x^3+xy^4+yz^2, \quad G^{max} =\langle \gamma = (1/3,11/12,1/24)\rangle$} \\\hline\hline
\multicolumn{2}{||c||}{$k \leq 5 $, $ X = e_{7\gamma}, Y =e_{11\gamma}$. $P = 7/24, \hat{c} = 7/6$. }\\
\multicolumn{2}{||c||}{Relations:  $2X^7 = Y^3, X^8 = Y^4 = 0 $ , $\dim = 17$. }\\
\hline
 	\begin{tabular}{cc}
 	\multicolumn{2}{c}{3 pt correlators}
 	\\ \hline
$\langle \one, \one, X^7Y\rangle= -2$&
$\langle \one, X, X^7Y\rangle= -2$\\ 
$\langle \one, X^2, X^6Y\rangle= -2$&
$\langle \one, Y, X^7\rangle= -2$\\ 
$\langle \one, X^3, X^5Y\rangle= -2$&
$\langle \one, XY, X^6\rangle= -2$\\ 
$\langle \one, X^4, X^3Y\rangle= -2$&
$\langle \one, X^2Y, X^5\rangle= -2$\\ 
$\langle \one, Y^3, Y^3\rangle= -4$&
$\langle X, X, X^5Y\rangle= -2$\\ 
$\langle X, X^2, X^4Y\rangle= -2$&
$\langle X, Y, X^6\rangle= -2$\\ 
$\langle X, X^3, X^3Y\rangle_\ast= -2$& 
$\langle X, XY, X^5\rangle= -2$\\
$\langle X, X^4, X^2Y\rangle= -2$& 
$\langle X^2, Y, X^5\rangle= -2$\\
$\langle X^2, X^2, X^3Y\rangle_\ast= -2$& 
$\langle X^2, XY, X^4\rangle= -2$\\
$\langle X^2, X^3, X^2Y\rangle_\ast= -2$& 
$\langle Y, X^3, X^4\rangle= -2$\\ 
$\langle Y, Y, Y^2\rangle= -4$&
$\langle X^3, X^3, XY\rangle_\ast= -2$\\  
\end{tabular}&
\begin{tabular}{c}
4 pt correlators
	 	\\ \hline
		$\langle X,X,X^6,X^7Y \rangle = 2/3$\\
		$\langle X,X,X^7,X^6Y \rangle = 1/3$\\
		$\langle X,Y,X^7,X^7 \rangle = -1/3$\\
		$\langle X,Y,Y^2,Y^3 \rangle_\ast =-4/3$\\
		$\langle Y,Y,XY,X^7Y \rangle =-2/3$\\
		$\langle Y,Y,X^2Y,X^6Y \rangle =-2/3$\\
		$\langle Y,Y,X^3Y,X^5Y \rangle =-2/3$\\
		$\langle Y,Y,X^4Y,X^4Y \rangle_\ast =-2/3$\\
		\\\hline
		5 pt correlators
	 	\\ \hline
		None\\
\end{tabular}\\
\hline\hline
\end{tabular}

\begin{tabular}{||c|c||}
\hline\hline
\multicolumn{2}{||c||}{$Q_{2,0}=x^3+xy^4+yz^2, \quad G = \langle J = (1/3,1/6, 5/12) \rangle$} \\\hline\hline
\multicolumn{2}{||c||}{ $k \leq 7 $, $X = e_{10J}, Y =e_{8J}, Z = y^3e_{6J}, W = xy e_{6J}$. $P = 7/12, \hat{c} = 7/6$. }\\
\multicolumn{2}{||c||}{Relations:  $X^3Y = 8Z^2 = -24W^2, X^4 = Y^2 = Z^3 = W^3 = 0 $ $a = \pm1$, $\dim = 10$. }\\
\hline
 	\begin{tabular}{c}
 	{3 pt correlators}
 	\\ \hline
	\begin{tabular}{cc}
$\langle \one, \one, X^3Y\rangle= -2$&
$\langle \one, X, X^2Y\rangle= -2$\\ 
$\langle \one, Y, X^3\rangle= -2$&
$\langle \one, X^2, XY\rangle= 1$\\
$\langle \one, Z, Z\rangle= -1/4$& 
$\langle \one, W, W\rangle= 1/12$\\
$\langle X, X, XY\rangle_\ast= -2 $&
$\langle X, Y, X^2\rangle= 1$\\
\end{tabular}\\
		5 pt correlators
	 	\\ \hline
		$\langle X,X,Z,X^2Y, X^3Y \rangle_\ast =0$\\
		$\langle X,Y,Y,X^2Y, X^3Y \rangle =2/9$\\
		$\langle X,Y,Z,X^3, X^3Y \rangle_\ast =0$\\
		$\langle X,Z,Z,Z, X^3Y \rangle_\ast =a/72$\\
		$\langle X,Z,W,W, X^3Y \rangle_\ast =a/216$\\
		$\langle Y,Y,Y,X^3, X^3Y \rangle =2/9$\\
		$\langle Y,Y,Z,Z, X^3Y \rangle_\ast =-1/36$\\
		$\langle Y,Y,W,W, X^3Y \rangle_\ast =1/108$\\
		$\langle Y,Z,Z,XY, X^2Y \rangle_\ast =-1/36$\\
		$\langle Y,W,W,XY, X^2Y \rangle_\ast =-1/36$\\
		$\langle X^2,Z,Z,Z, X^2Y \rangle_\ast =a/72$\\
		$\langle X^2,Z,W,W, X^2Y \rangle_\ast =a/216$\\
		$\langle Z,Z,Z,XY,X^3 \rangle_\ast =a/72$\\
		$\langle Z,W,W,XY,X^3 \rangle_\ast =a/216$\\
7 pt correlators
	 	\\ \hline
		$\langle X^2, Z, Z, Z, Z, X^3Y, X^3Y \rangle_\ast =-1/216$\\
		$\langle X^2, Z, Z, W, W, X^3Y, X^3Y \rangle_\ast =1/1944$\\
		$\langle X^2, W, W, W, W, X^3Y, X^3Y \rangle_\ast =-1/1944$\\\\\\
\end{tabular}&
\begin{tabular}{c}
4 pt correlators
	 	\\ \hline
		$\langle X,X,X^2,X^3Y \rangle_\ast = 2/3$\\
		$\langle X,X,X^3,X^2Y \rangle_\ast = 4/3$\\
		$\langle X,Y,Z,X^2Y \rangle_\ast  = 0$\\
		$\langle X,Y,X^3,X^3 \rangle = 2/3$\\
		$\langle X,Z,Z,X^3 \rangle_\ast = -1/12$\\
		$\langle X,Z,XY,XY \rangle_\ast =0$\\
		$\langle X,W,W,X^3 \rangle_\ast = 1/36$\\
		$\langle Y,Y,Y,X^2Y \rangle =-2/3$\\
		$\langle Y,Y,Z,X^3 \rangle_\ast =0$\\
		$\langle Y,Y,XY,XY \rangle_\ast =-2/3$\\
		$\langle Y,X^2,Z,XY \rangle_\ast =0$\\
		$\langle Y,Z,Z,Z \rangle_\ast =a/48$\\
		$\langle Y,Z,W,W \rangle_\ast =a/144$\\
		$\langle X^2, X^2,Z,Z \rangle_\ast =1/12$\\
		$\langle X^2, X^2,W,W \rangle_\ast =1/36$\\
6 pt correlators
	 	\\ \hline
		$\langle X, Y , Z, Z,  X^3Y, X^3Y \rangle_\ast =-1/108$\\
		$\langle X, Y , W, W,  X^3Y, X^3Y \rangle_\ast =1/324$\\
		$\langle Y, Y , Y, Z,  X^3Y, X^3Y \rangle_\ast =0$\\
		$\langle Z, Z , Z, Z,  XY, X^3Y \rangle_\ast =-1/288$\\
		$\langle Z, Z , Z, Z,  X^2Y, X^2Y \rangle_\ast =-1/288$\\
		$\langle Z, Z , W, W,  XY, X^3Y \rangle_\ast =1/2592$\\
		$\langle Z, Z , W, W,  X^2Y, X^2Y \rangle_\ast =1/2592$\\
		$\langle W, W , W, W,  XY, X^3Y \rangle_\ast =-1/2592$\\
		$\langle W, W , W, W,  X^2Y, X^2Y \rangle_\ast =-1/2592$\\
\end{tabular}\\
\hline\hline
\end{tabular}

\begin{tabular}{||c|c||}
\hline\hline
\multicolumn{2}{||c||}{$Q_{2,0}^T=x^3y+y^4z+z^2, \quad G^{max} = \langle J = (7/24,1/8, 1/2)\rangle$} \\\hline\hline
\multicolumn{2}{||c||}{$k \leq 5 $, $ X = e_{11J}, Y =e_{9J}, Z = y^3e_{8J}$. $P = 5/12, \hat{c} = 7/6$. }\\
\multicolumn{2}{||c||}{Relations:  $X^4 = -3Y^2, aX^3Y=Z^2, X^8 = Y^3 = Z^3 =0 $, $a\neq 0$, $\dim = 14$. }\\
\hline
 	\begin{tabular}{c}
 	{3 pt correlators}
 	\\ \hline
	\begin{tabular}{cc}
$\langle \one, \one, X^3Y^2\rangle= 1$&
$\langle \one, X, X^2Y^2\rangle= 1$\\
$\langle \one, X^2, XY^2\rangle= 1$&
$\langle \one, Y, X^3Y\rangle= 1$\\
$\langle \one, X^3, Y^2\rangle= 1$&
$\langle \one, XY, X^2Y\rangle= a$\\
$\langle \one, Z, YZ\rangle= 1$&
$\langle X, X, XY^2\rangle= 1$\\ 
$\langle X, X^2, Y^2\rangle= 1$&
$\langle X, Y, X^2Y\rangle= 1$\\ 
$\langle X, XY, XY\rangle= 1$&
$\langle X, X^3, X^3\rangle_\ast= -3$\\
$\langle X^2, Y, XY\rangle= 1$&
$\langle X^2, X^2, X^3\rangle_\ast= -3$\\
$\langle Y, Z, Z\rangle_\ast= a$\\ 
\end{tabular}\\
5 pt correlators
	 	\\ \hline
		$\langle X,Y,Y,X^3Y^2,X^3Y^2 \rangle_\ast = -1/96$\\
		$\langle Y,Z,Z,XY^2,X^3Y^2 \rangle_\ast = -a/96$\\
		$\langle Y,Z,Z,X^2Y^2,X^2Y^2 \rangle_\ast = -a/96$\\

\end{tabular}&
\begin{tabular}{c}
4 pt correlators
	 	\\ \hline
		$\langle X,X,X^2Y,X^3Y^2 \rangle = 1/8$\\
		$\langle X,X,X^3Y,X^2Y^2 \rangle = 1/8$\\
		$\langle X,Y,Y^2,X^2Y^2 \rangle = -1/12$\\
		$\langle X,Y,X^3Y,X^3Y \rangle =0$\\
		$\langle X,Y,XY^2,XY^2 \rangle =-1/12$\\
		$\langle X,Y,X^3,X^3Y^2 \rangle_\ast =1/8$\\
		$\langle X,Z,Z,X^3Y^2 \rangle_\ast =-a/8$\\
		$\langle X,Z,YZ,X^3Y \rangle_\ast =-a/8$\\
		$\langle X^2,Z,Z,X^2Y^2 \rangle_\ast =-a/8$\\
		$\langle Y,Y,Y,X^3Y^2 \rangle =7/24$\\
		$\langle Y,Y,XY,X^2Y^2 \rangle =5/24$\\
		$\langle Y,Y,X^2Y,XY^2 \rangle =1/8$\\
		$\langle Y,Y,Y^2,X^3Y \rangle =1/24$\\
		$\langle Y,Y,YZ,YZ \rangle_\ast =a/4$\\
		$\langle Y,Z,Y^2,YZ \rangle_\ast =-a/24$\\
		$\langle Z,Z,X^3,XY^2 \rangle_\ast =-a/8$\\
		$\langle Z,Z,XY,X^3Y \rangle_\ast =-a/8$\\
		$\langle Z,Z,X^2Y,X^2Y \rangle_\ast =-a/8$\\
		$\langle Z,Z,Y^2,Y^2 \rangle_\ast =a/24$\\
		\\\hline
\end{tabular}\\
\hline\hline
\end{tabular}

\begin{tabular}{||c|c||}
\hline\hline
\multicolumn{2}{||c||}{$S_{1,0}=x^2y+y^2z+z^5, \quad 
G^{max}=\langle \gamma = (1/20, 9/10, 1/5)\rangle$} \\\hline\hline
\multicolumn{2}{||c||}{$k \leq 4 $, $X = e_{7\gamma}, Y =e_{\gamma}$. $P = 1/4, \hat{c} = 6/5$. }\\
\multicolumn{2}{||c||}{Relations:  $X^5 = Y^3 $, $X^9 = Y^4 = 0$, $\dim = 17$.  }\\
\hline
 	\begin{tabular}{cc}
 	\multicolumn{2}{c}{3 pt correlators}
 	\\ \hline
$\langle \one, \one, X^3Y^3\rangle= -2$&
$\langle \one, X, X^2Y^3\rangle= -2$\\ 
$\langle \one, Y, X^3Y^2\rangle= -2$&
$\langle \one, X^2, XY^3\rangle= -2$\\ 
$\langle \one, XY, X^2Y^2\rangle= -2$&
$\langle \one, X^3, Y^3\rangle= -2$\\ 
$\langle \one, Y^2, X^3Y\rangle= -2$&
$\langle \one, X^2Y, XY^2\rangle= -2$\\ 
$\langle \one, X^4,X^4\rangle= -2$&
$\langle X, X, XY^3\rangle= -2$\\ 
$\langle X, Y, X^2Y^2\rangle= -2$&
$\langle X, X^2, Y^3\rangle= -2$\\ 
$\langle X, XY, XY^2\rangle= -2$& 
$\langle X, Y^2 X^2Y\rangle= -2$\\ 
$\langle X, X^3, X^4\rangle= -2$& 
$\langle Y, Y, X^3Y\rangle_\ast= -2$\\ 
$\langle Y, X^2, XY^2\rangle= -2$&
$\langle Y, XY, X^2Y\rangle_\ast= -2$\\ 
$\langle Y, X^3, Y^2\rangle= -2$& 
$\langle X^2, X^2, X^4\rangle_\ast= -2$\\
$\langle X^2, XY, Y^2\rangle= -2$&
$\langle X^2, X^3, X^3\rangle_\ast= -2$\\
$\langle XY, XY, XY\rangle= -2$\\ 
\end{tabular}&
\begin{tabular}{c}
4 pt correlators
	 	\\ \hline
		$\langle X,X,X^3Y,X^3Y^3 \rangle = -2/5$\\
		$\langle X,X,X^3Y^2,X^3Y^2 \rangle = -2/5$\\
		$\langle X,Y,X^4,X^3Y^3 \rangle = -2/5$\\
		$\langle X,Y,Y^3,X^2Y^3 \rangle_\ast =-2/5$\\
		$\langle X,Y,XY^3,XY^3 \rangle_\ast =1/5$\\
		$\langle Y,Y,Y^2,X^3Y^3 \rangle_\ast =8/5$\\
		$\langle Y,Y,XY^2,X^2Y^3 \rangle_\ast =6/5$\\
		$\langle Y,Y,Y^3,X^3Y^2 \rangle_\ast =-2/5$\\
		$\langle Y,Y,X^2Y^2,XY^3 \rangle_\ast =4/5$\\
\end{tabular}\\
\hline\hline
\end{tabular}

\begin{tabular}{||c|c||}
\hline\hline
\multicolumn{2}{||c||}{$S_{1,0}=x^2y+y^2z+z^5, \quad G =\langle J =( 3/10 ,2/5 ,1/5)\rangle$} \\\hline\hline
\multicolumn{2}{||c||}{$k \leq 7 $, $X = e_{2J}, Y =e_{7J}, Z = e_{6J}, W = z^2e_{5J}$. $P = 3/5, \hat{c} = 6/5$. }\\
\multicolumn{2}{||c||}{Relations:  $2XZ = aY^2, X^3 = 2aYZ, XYZ = 10W^2, X^5 =Y^4 = Z^2 = 0  $, $a \neq 0$, $\dim = 10$.  }\\
\hline
 	\begin{tabular}{c}
 	{3 pt correlators}
 	\\ \hline
	\begin{tabular}{cc}
$\langle \one, \one, XYZ \rangle= 1$&
$\langle \one, X, YZ\rangle= 1$\\ 
$\langle \one, Y, XZ\rangle= 1$&
$\langle \one, Z, XY\rangle= 1$\\ 
$\langle \one, W, W\rangle= 1/10$&
$\langle \one, X^2, X^2\rangle= 2a$\\ 
$\langle X, Y,Z\rangle= 1$&
$\langle X, X,X^2\rangle_\ast= 2a$\\
$\langle Y,Y,Y\rangle_\ast= 2/a$\\
\end{tabular}\\
5 pt correlators
	 	\\ \hline
		$\langle X,X,Z,YZ,XYZ \rangle = 1/25$\\
		$\langle X,Y,Y, YZ,XYZ \rangle_\ast = 0$\\
		$\langle X,Y,Z,XZ,XYZ \rangle = -1/50$\\
		$\langle X,Z,Z,XY,XYZ \rangle = 0$\\
		$\langle X,Z,W,W,XYZ \rangle_\ast = -3/500$\\
		$\langle X,W,W,XZ,YZ \rangle_\ast = -1/500$\\
		$\langle Y,Y,Y,XZ,XYZ \rangle_\ast = -2/(25a)$\\
		$\langle Y,Y,Z,XY,XYZ \rangle_\ast = -2/(25a)$\\
		$\langle Y,Y,W,W,XYZ \rangle_\ast = 1/(125a)$\\
		$\langle Y,Z,Z,X^2,XYZ \rangle_\ast = 0$\\
		$\langle Y,Z,Z,YZ,YZ \rangle_\ast = 1/(25a)$\\
		$\langle Y,W,W,XY,YZ \rangle_\ast = 1/(250a)$\\
		$\langle Y,W,W,XZ,XZ \rangle_\ast = 1/250$\\
		$\langle Z,Z,Z,Z,XYZ \rangle = 3/25$\\
		$\langle Z,Z,Z,XZ,YZ \rangle = 2/25$\\
		$\langle Z,W,W,X^2,YZ \rangle_\ast = -1/125$\\
		$\langle Z,W,W,XY,XZ \rangle_\ast = -1/500$\\
		$\langle W,W,W,W,XZ \rangle_\ast = -1/5000$\\
7 pt correlators
	 	\\ \hline
		$\langle Y,W,W,W,W,XYZ,XYZ \rangle =-3/(62500a)$\\

\end{tabular}&
\begin{tabular}{c}
4 pt correlators
	 	\\ \hline
		$\langle X,X,Y,XY^2 \rangle = 1/5$\\
		$\langle X,X,XY,YZ \rangle = 1/5$\\
		$\langle X,X,XZ,XZ \rangle_\ast = a/5$\\
		$\langle X,Y,X^2,YZ \rangle_\ast =2/5$\\
		$\langle X,Y,XY,XZ \rangle =1/5$\\
		$\langle X,Z,Z,YZ \rangle =1/10$\\
		$\langle X,Z,X^2,XZ \rangle_\ast =a/5$\\
		$\langle X,Z,XY,XY \rangle =0$\\
		$\langle X,W,W,XY \rangle =-1/50$\\
		$\langle Y,Y,Z,YZ \rangle_\ast =-1/(5a)$\\
		$\langle Y,Y,X^2,XZ \rangle_\ast =0$\\
		$\langle Y,Y,XY,XY \rangle_\ast =2/(5a)$\\
		$\langle Y,Z,Z,XZ \rangle =-1/5$\\
		$\langle Y,Z,X^2,XY \rangle_\ast =0$\\
		$\langle Y,W,W,X^2 \rangle_\ast =-1/50$\\
		$\langle Z,Z,Z,XY \rangle =-1/10$\\
		$\langle Z,Z,W,W \rangle_\ast =1/50$\\
		$\langle Z,Z,X^2,X^2 \rangle_\ast =2/5$\\
6 pt correlators
	 	\\ \hline
		$\langle X,Z,Z,Z,XYZ,XYZ \rangle = 0$\\
		$\langle X,X,Z, Z,XYZ, XYZ \rangle_\ast = 1/625$\\
		$\langle Y,Y,Z,Z, XYZ,XYZ \rangle_\ast = 4/(125a)$\\
		$\langle Y,Z,W,W, YZ,XYZ \rangle_\ast = 1/(2500a)$\\
		$\langle Z,Z,W, W,XZ, XYZ \rangle_\ast = -3/2500$\\
		$\langle W,W,W,W,X^2,XYZ \rangle_\ast =-9/25000$\\
		$\langle W,W,W,W,YZ,YZ \rangle_\ast =3/(12500a)$\\

\end{tabular}\\
\hline\hline
\end{tabular}

\begin{tabular}{||c|c||}
\hline\hline
\multicolumn{2}{||c||}{$E_{19}=x^3+xy^7, \quad G^{max} = \langle J = (1/3,2/21)\rangle$} \\\hline\hline
\multicolumn{2}{||c||}{$k \leq 5 $, $X = e_{13J}, Y =e_{11J}$. $P = 2/7, \hat{c} = 8/7$. }\\
\multicolumn{2}{||c||}{Relations:  $-7X^6 = Y^3, X^7 = Y^5 = 0 $, $\dim = 15$.  }\\
\hline
 	\begin{tabular}{cc}
 	\multicolumn{2}{c}{3 pt correlators}
 	\\ \hline
$\langle \one, \one, X^6Y\rangle= 1$&
$\langle \one, X, X^5Y\rangle= 1$\\ 
$\langle \one, Y, X^6\rangle= 1$&
$\langle \one, X^2, X^4Y\rangle= 1$\\
$\langle \one, XY, X^5\rangle= 1$&
$\langle \one, X^3, X^3Y\rangle= 1$\\ 
$\langle \one, X^2Y, X^4\rangle= 1$&
$\langle \one, Y^2, Y^2\rangle= -7$\\ 
$\langle X, X, X^4Y\rangle= 1$&
$\langle X, Y, X^5\rangle= 1$\\ 
$\langle X, X^2, X^3Y\rangle= 1$&
$\langle X, XY, X^4\rangle= 1$\\
$\langle X, X^3, X^2Y\rangle= 1$&
$\langle X^2, X^2, Y^2\rangle= 1$\\ 
$\langle X^2, Y, X^2Y\rangle= 1$&
$\langle X^2, XY, XY\rangle= 1$\\ 
$\langle X^3, Y, XY\rangle= 1$&
$\langle Y, X^2, X^4\rangle= 1$\\ 
$\langle Y, X^3, X^3\rangle= 1$& 
$\langle Y, Y, Y^2\rangle= -7$\\ 
$\langle X^2, X^2, X^2Y\rangle= 1$&
$\langle X^2, XY, X^3\rangle= 1$\\
\end{tabular}&
\begin{tabular}{c}
4 pt correlators
	 	\\ \hline
		$\langle X,X,X^5,X^6Y \rangle = 2/21$\\
		$\langle X,X,X^6,X^5Y \rangle = 1/21$\\
		$\langle X,Y,X^6,X^6 \rangle = -1/21$\\
		$\langle X,Y,Y^2,X^6Y \rangle_\ast = 1/3$\\
		$\langle Y,Y,XY,X^6Y \rangle =1/3$\\
		$\langle Y,Y,X^2Y,X^5Y \rangle =1/3$\\
		$\langle Y,Y,X^3Y,X^4Y \rangle =1/3$\\\\\hline
		5 pt correlators
	 	\\ \hline
		None\\
\end{tabular}\\
\hline\hline
\end{tabular}

\begin{tabular}{||c|c||}
\hline\hline
\multicolumn{2}{||c||}{$Z_{17}^T=x^3+xy^8, \quad G^{max} = \langle \gamma = (2/3,1/24)\rangle$} \\\hline\hline
\multicolumn{2}{||c||}{$k \leq 5$, $X = e_{5\gamma}, Y =e_{\gamma}$. $P = 7/24, \hat{c} = 7/6$. }\\
\multicolumn{2}{||c||}{Relations:  $-8X^7 = Y^3, X^8=XY^2=Y^4=0$, $\dim = 17$.  }\\
\hline
 	\begin{tabular}{cc}
 	\multicolumn{2}{c}{3 pt correlators}
 	\\ \hline
$\langle \one, \one, X^7Y\rangle= 1$&
$\langle \one, X, X^6Y\rangle= 1$\\ 
$\langle \one, X^2, X^5Y\rangle= 1$&
$\langle \one, Y, X^7\rangle= 1$\\ 
$\langle \one, X^3, X^4Y\rangle= 1$&
$\langle \one, XY, X^6\rangle= 1$\\
$\langle \one, X^4, X^3Y\rangle= 1$& 
$\langle \one, X^2Y, X^5\rangle= 1$\\
$\langle \one, Y^2, Y^2\rangle= -8$&
$\langle X, X, X^5Y\rangle= 1$\\
$\langle X, X^2, X^4Y\rangle= 1$& 
$\langle X, Y, X^6\rangle= 1$\\
$\langle X, X^3, X^3Y\rangle= 1$& 
$\langle X, XY, X^5\rangle= 1$\\
$\langle X, X^4, X^2Y\rangle= 1$& 
$\langle X^2, X^2, X^3Y\rangle= 1$\\ 
$\langle X^2, Y, X^5\rangle= 1$&
$\langle X^2, X^3, X^2Y\rangle= 1$\\ 
$\langle X^2, XY, X^4\rangle= 1$& 
$\langle X^3, Y, XY\rangle= 1$\\
$\langle Y, X^3, X^4\rangle= 1$& 
$\langle Y, Y, Y^2\rangle= \pm1$\\ 
$\langle X^3, X^3, XY\rangle= 1$\\  
\end{tabular}&
\begin{tabular}{c}
4 pt correlators
	 	\\ \hline
		$\langle X,X,X^6,X^7Y \rangle = 1/12$\\
		$\langle X,X,X^7,X^6Y \rangle = 1/24$\\
		$\langle X,Y,X^7,X^7 \rangle = -1/24$\\
		$\langle X,Y,Y^2,X^7Y \rangle_\ast =1/3$\\
		$\langle Y,Y,XY,X^7Y \rangle =1/3$\\
		$\langle Y,Y,X^2Y,X^6Y \rangle =1/3$\\
		$\langle Y,Y,X^3Y,X^5Y \rangle =1/3$\\
		$\langle Y,Y,X^4Y,X^4Y \rangle =1/3$\\
		\\\hline
		5 pt correlators
	 	\\ \hline
		None\\
\end{tabular}\\
\hline\hline
\end{tabular}

\begin{tabular}{||c|c||}
\hline\hline
\multicolumn{2}{||c||}{$Z_{17}^T=x^3+xy^8, \quad G=\langle J = (1/3,1/12)\rangle$} \\\hline\hline
\multicolumn{2}{||c||}{$k \leq7 $, $X = e_{4J}, Y =e_{2J}, Z = y^7e_0, W = xy^3e_0$. $P = 7/12, \hat{c} = 7/6$. }\\
\multicolumn{2}{||c||}{Relations:  $X^3Y = -8Z^2 = 24W^2, X^4 = Y^2 Z^3 = W^3 $, $a,b \neq 0$ , $\dim = 10$. }\\
\hline
 	\begin{tabular}{c}
{3 pt correlators}	\\ \hline
\begin{tabular}{cc}
$\langle \one, \one, X^3Y^\rangle= 1$&
$\langle \one, X, X^2Y\rangle= 1$\\ 
$\langle \one, Y, X^3\rangle= 1$&
$\langle \one, X^2, XY\rangle= 1$\\
$\langle \one, Z, Z\rangle= -1/8$&
$\langle \one, W, W\rangle= 1/24$\\ 
$\langle X, X, XY\rangle= 1$&
$\langle X, Y, X^2\rangle= 1$\\ 
\end{tabular}\\
4 pt correlators
	 	\\ \hline
		$\langle X,X,X^2,X^3Y \rangle = 1/12$\\
		$\langle X,X,X^3,X^2Y \rangle = 1/6$\\
		$\langle X,Y,X^3,X^3 \rangle = 1/12$\\
		$\langle X,Y,Z,X^2Y \rangle_\ast =0$\\
		$\langle X,Z,Z,X^3 \rangle_\ast =1/96$\\
		$\langle X,Z,XY,XY \rangle_\ast =0$\\
		$\langle X,W,W,X^3 \rangle_\ast =-1/288$\\
		$\langle Y,Y,Y,X^2Y \rangle =1/3$\\
		$\langle Y,Y,XY,XY \rangle =1/3$\\
		$\langle Y,Y,Z,X^3 \rangle_\ast =0$\\
		$\langle Y,X^2,Z,XY \rangle_\ast =0$\\
		$\langle Y,Z,Z,Z \rangle_\ast =a = \pm1/192$\\
		$\langle Y,Z,W,W \rangle_\ast =b = \pm 1/576$\\
		$\langle X^2,X^2,Z,Z  \rangle_\ast =1/96$\\
		$\langle X^2,X^2,W,W \rangle_\ast =-1/288$\\\\\hline
7 pt correlators
	 	\\ \hline
		None\\

\end{tabular}&
\begin{tabular}{c}
5 pt correlators
	 	\\ \hline
		$\langle X,Y,Y, X^2Y,X^3Y \rangle = 1/36$\\
		$\langle X,X,Z,X^2Y,X^3Y \rangle_\ast = 0$\\
		$\langle X,Y,Z,X^3,X^3Y \rangle_\ast = 0$\\
		$\langle X,Z,Z,Z,X^3Y \rangle_\ast =-a/6$\\
		$\langle X,Z,W,W,X^3Y \rangle_\ast =-b/6$\\
		$\langle Y,Y,Y,X^3,X^3Y \rangle =1/36$\\
		$\langle Y,Y,Z,Z,X^3Y \rangle_\ast =1/288$\\
		$\langle Y,Y,W,W,X^3Y \rangle_\ast =-1/864$\\
		$\langle Y,Z,Z,XY,X^2Y \rangle_\ast =1/288$\\
		$\langle Y,W,W,XY,X^2Y \rangle_\ast =-1/864$\\
		$\langle X^2,Z,Z,Z,X^2Y  \rangle_\ast =-a/6$\\
		$\langle X^2,Z,W,W,X^2Y \rangle_\ast =-b/6$\\
		$\langle Z,Z,Z,XY,X^3 \rangle_\ast =-a/6$\\
		$\langle Z,W,W,XY,X^3 \rangle_\ast =-b/6$\\
		\\\hline
6 pt correlators
	 	\\ \hline
		$\langle X,Y,Z,Z,X^3Y,X^3Y \rangle_\ast = -1/3456 $\\
		$\langle X,Y,W,W,X^3Y,X^3Y\rangle_\ast = 1/10368$\\
		$\langle Y,Y,Y,Z,X^3Y,X^3Y \rangle_\ast = 0$\\
		$\langle Z,Z,Z,Z,XY,X^3Y \rangle_\ast =1/9216$\\
		$\langle Z,Z,Z,Z,X^2Y,X^2Y \rangle_\ast =1/9216$\\
		$\langle Z,Z,W,W,XY,X^3Y \rangle_\ast =-1/82944$\\
		$\langle Z,Z,W,W,X^2Y,X^2Y \rangle_\ast =-1/82944$\\
		$\langle W,W,W,W,XY,X^3Y \rangle =1/82944$\\
		$\langle W,W,W,W,X^2Y,X^2Y \rangle =1/82944$\\
		\\\hline
\end{tabular}\\
\hline\hline
\end{tabular}

\begin{tabular}{||c|c||}
\hline\hline
\multicolumn{2}{||c||}{$Z_{19}=x^3y+y^9, \quad G^{max}= \langle J = (8/27,1/9)\rangle$} \\\hline\hline
\multicolumn{2}{||c||}{$k \leq 5 $, $X = e_{11J}, Y =e_{19J}$. $P = 1/3, \hat{c} = 32/27$. }\\
\multicolumn{2}{||c||}{Relations:  $X^9 = -3Y^2, X^{16} = Y^3 = 0 $, $\dim = 25$.  }\\
\hline
 	\begin{tabular}{c}
 	{3 pt correlators}
 	\\ \hline
	\begin{tabular}{cc}
$\langle \one, \one, X^7Y^2\rangle= 1$&
$\langle \one, X, X^6Y^2\rangle= 1$\\ 
$\langle \one, X^2, X^5Y^2\rangle= 1$&
$\langle \one, X^3, X^4Y^2\rangle= 1$\\ 
$\langle \one, X^4, X^3Y^2\rangle= 1$&
$\langle \one, Y, X^7Y^2\rangle= 1$\\
$\langle \one, X^5, X^2Y^2\rangle= 1$& 
$\langle \one, XY, X^6Y\rangle= 1$\\
$\langle \one, X^6, XY^2\rangle= 1$& 
$\langle \one, X^2Y, X^5Y\rangle= 1$\\ 
$\langle \one, X^7, Y^2\rangle= 1$&
$\langle \one, X^3Y, X^4Y\rangle= 1$\\
$\langle \one, X^8, X^8\rangle= -3$&
$\langle X, X, X^5Y^2\rangle= 1$\\
$\langle X, X^2, X^4Y^2\rangle= 1$&
$\langle X, X^3, X^3Y^2\rangle= 1$\\ 
$\langle X, X^4, X^2Y^2\rangle= 1$&
$\langle X, Y, X^6Y\rangle= 1$\\ 
$\langle X, X^5, XY^2\rangle= 1$& 
$\langle X, XY, X^5Y\rangle= 1$\\ 
$\langle X, X^6, Y^2\rangle= 1$& 
$\langle X, X^2Y, X^4Y\rangle= 1$\\ 
$\langle X, X^7, X^8\rangle= -3$& 
$\langle X, X^3Y, X^4Y\rangle= 1$\\ 
$\langle X^2, X^2, X^3Y^2\rangle= 1$&
$\langle X^2, X^3, X^2Y^2\rangle= 1$\\
$\langle X^2, X^4, XY^2\rangle= 1$&
$\langle X^2, Y, X^5Y\rangle= 1$\\ 
$\langle X^2, X^5, Y^2\rangle= 1$&
$\langle X^2, XY, X^4Y\rangle= 1$\\ 
$\langle X^2, X^6, X^8\rangle= -3$& 
$\langle X^2, X^2Y, X^3Y\rangle= 1$\\ 
$\langle X^2, X^7, X^7\rangle_\ast= -3$&
$\langle X^3, X^3, XY^2\rangle= 1$\\ 
$\langle X^3, X^4, Y^2\rangle= 1$&
$\langle X^3, Y, X^4Y\rangle= 1$\\ 
$\langle X^3, X^5, X^8\rangle= -3$& 
$\langle X^3, XY, X^3Y\rangle= 1$\\ 
$\langle X^3, X^6, X^7\rangle_\ast= -3$& 
$\langle X^3, X^2Y, X^2Y\rangle= 1$\\ 
$\langle X^4, X^4, X^8\rangle= -3$& 
$\langle X^4, Y, X^3Y\rangle= 1$\\ 
$\langle X^4, X^5, X^7\rangle_\ast= -3$& 
$\langle X^4, XY, X^2Y\rangle= 1$\\ 
$\langle X^4, X^6, X^6\rangle_\ast= -3$& 
$\langle Y, Y, X^7\rangle= 1$\\
$\langle Y, X^5, X^2Y\rangle= 1$&
$\langle Y, XY, X^6\rangle= 1$\\
$\langle X^5, X^5, X^6\rangle_\ast= -3$&
$\langle X^5, XY, XY\rangle= 1$\\  
\end{tabular}\\\\
		5 pt correlators\\\hline
		$\langle X^4,X^4,X^4,X^6Y^2,X^7Y^2 \rangle = 4/81$\\
		$\langle X^3, X^4,X^4,X^7Y^2,X^7Y^2 \rangle_\ast = 2/81$\\\\\\\\\\\\\\\\\\\\\
\end{tabular}&
\begin{tabular}{c}
4 pt correlators
	 	\\ \hline
		$\langle X,X,X^7Y,X^7Y^2 \rangle = 1/9$\\
		$\langle X,X^2,X^6Y,X^7Y^2 \rangle = 1/9$\\
		$\langle X,X^2,X^7Y,X^6Y^2 \rangle = 1/9$\\
		$\langle X,X^3,X^5Y,X^7Y^2 \rangle = 1/9$\\
		$\langle X,X^3,X^6Y,X^6Y^2 \rangle = 1/9$\\
		$\langle X,X^3,X^7Y,X^5Y^2 \rangle = 1/9$\\
		$\langle X,X^4,X^4Y,X^7Y^2 \rangle = 1/9$\\
		$\langle X,X^4,X^5Y,X^6Y^2 \rangle = 1/9$\\
		$\langle X,X^4,X^6Y,X^5Y^2 \rangle = 1/9$\\
		$\langle X,X^4,X^7Y,X^4Y^2 \rangle = 1/9$\\
		$\langle X^2,X^2,X^5Y,X^7Y^2 \rangle = 1/9$\\
		$\langle X^2,X^2,X^6Y,X^6Y^2 \rangle = 2/9$\\
		$\langle X^2,X^2,X^7Y,X^5Y^2 \rangle = 1/9$\\
		$\langle X^2,X^3,X^4Y,X^7Y^2 \rangle = 1/9$\\
		$\langle X^2,X^3,X^5Y,X^6Y^2 \rangle = 2/9$\\
		$\langle X^2,X^3,X^6Y,X^5Y^2 \rangle = 2/9$\\
		$\langle X^2,X^3,X^7Y,X^4Y^2 \rangle = 1/9$\\
		$\langle X^2,X^4,X^3Y,X^7Y^2 \rangle = 1/9$\\
		$\langle X^2,X^4,X^4Y,X^6Y^2 \rangle = 2/9$\\
		$\langle X^2,X^4,X^5Y,X^5Y^2 \rangle = 2/9$\\
		$\langle X^2,X^4,X^6Y,X^4Y^2 \rangle = 2/9$\\
		$\langle X^2,X^4,X^7Y,X^3Y^2 \rangle = 1/9$\\
		$\langle X^3,X^3,X^3Y,X^7Y^2 \rangle = 1/9$\\
		$\langle X^3,X^3,X^4Y,X^6Y^2 \rangle = 2/9$\\
		$\langle X^3,X^3,X^5Y,X^5Y^2 \rangle = 1/3$\\
		$\langle X^3,X^3,X^6Y,X^4Y^2 \rangle = 2/9$\\
		$\langle X^3,X^3,X^7Y,X^3Y^2 \rangle = 1/9$\\
		$\langle X^3,X^4,X^2Y,X^7Y^2 \rangle = 1/9$\\
		$\langle X^3,X^4,X^3Y,X^6Y^2 \rangle = 2/9$\\
		$\langle X^3,X^4,X^4Y,X^5Y^2 \rangle = 1/3$\\
		$\langle X^3,X^4,X^5Y,X^4Y^2 \rangle = 1/3$\\
		$\langle X^3,X^4,X^6Y,X^3Y^2 \rangle = 2/9$\\
		$\langle X^3,X^4,X^7Y,X^2Y^2 \rangle = 1/9$\\
		$\langle X^4,X^4,XY,X^7Y^2 \rangle = 1/9$\\
		$\langle X^4,X^4,X^2Y,X^6Y^2 \rangle = 2/9$\\
		$\langle X^4,X^4,X^3Y,X^5Y^2 \rangle = 1/3$\\
		$\langle X^4,X^4,X^4Y,X^4Y^2 \rangle = 4/9$\\
		$\langle X^4,X^4,X^5Y,X^3Y^2 \rangle = 1/3$\\
		$\langle X^4,X^4,X^6Y,X^2Y^2 \rangle = 2/9$\\
		$\langle X^4,X^4,X^7Y,XY^2 \rangle = 1/9$\\
 \end{tabular}\\
\hline\hline
\end{tabular}

\begin{tabular}{||c|c||}
\hline\hline
\multicolumn{2}{||c||}{$Z_{19}^T=x^3+xy^9, \quad G^{max} = \langle J = (1/3,2/27)\rangle$} \\\hline\hline
\multicolumn{2}{||c||}{$k \leq 5 $, $X = e_{16J}, Y =e_{14J}$. $P = 8/27, \hat{c} = 32/27$. }\\
\multicolumn{2}{||c||}{Relations:  $-9X^8 = Y^3, X^9 = Y^5 =0$, $\dim =19$.  }\\
\hline
 	\begin{tabular}{cc}
 	\multicolumn{2}{c}{3 pt correlators}
 	\\ \hline
$\langle \one, \one, X^8Y\rangle= 1$&
$\langle \one, X, X^7Y\rangle= 1$\\ 
$\langle \one, X^2, X^6Y\rangle= 1$&
$\langle \one, Y, X^8\rangle= 1$\\ 
$\langle \one, X^3, X^5Y\rangle= 1$&
$\langle \one, XY, X^7\rangle= 1$\\ 
$\langle \one, X^4, X^4Y\rangle= 1$&
$\langle \one, X^2Y, X^6\rangle= 1$\\ 
$\langle \one, X^5, X^3Y\rangle= 1$&
$\langle \one, Y^2, Y^2\rangle= -9$\\ 
$\langle X, X, X^6Y\rangle= 1$&
$\langle X, X^2, X^5Y\rangle= 1$\\ 
$\langle X, Y, X^7\rangle= 1$&
$\langle X, X^2, X^5Y\rangle= 1$\\ 
$\langle X, XY, X^6Y\rangle= 1$& 
$\langle X, X^3, X^4Y\rangle= 1$\\
$\langle X, X^2Y, X^5\rangle= 1$& 
$\langle X, X^4, X^3Y\rangle= 1$\\ 
$\langle X^2, X^2, X^4Y\rangle= 1$& 
$\langle X^2, Y, X^6\rangle= 1$\\
$\langle X^2, X^3, X^3Y\rangle= 1$&
$\langle X^2, XY, X^5\rangle= 1$\\ 
$\langle X^2, X^4, X^2Y\rangle= 1$& 
$\langle Y, Y, Y^2\rangle= -9$\\ 
$\langle Y, X^3, X^5\rangle= 1$&
$\langle Y, X^4, X^4\rangle= 1$\\ 
$\langle X^3, X^3, X^2Y\rangle= 1$&
$\langle X^3, XY, X^4\rangle= 1$\\
\end{tabular}&
\begin{tabular}{c}
4 pt correlators
	 	\\ \hline
		$\langle X,X,X^7,X^8Y \rangle = 2/27$\\
		$\langle X,X,X^8,X^7Y \rangle = 1/27$\\
		$\langle X,Y,X^8,X^8 \rangle = -1/27$\\
		$\langle X,Y,Y^2,X^8Y \rangle_\ast = 1/3$\\
		$\langle Y,Y,XY,X^8Y \rangle =1/3$\\
		$\langle Y,Y,X^2Y,X^7Y \rangle =1/3$\\
		$\langle Y,Y,X^3Y,X^6Y \rangle =1/3$\\
		$\langle Y,Y,X^4Y,X^5Y \rangle =1/3$\\
		5 pt correlators
	 	\\ \hline
		None\\
\end{tabular}\\
\hline\hline
\end{tabular}

\begin{tabular}{||c|c||}
\hline\hline
\multicolumn{2}{||c||}{$W_{17}=x^4+Xy^5, \quad G^{max} = \langle J = (1/4,3/20)\rangle$} \\\hline\hline
\multicolumn{2}{||c||}{$k \leq 4 $, $X = e_{14J}, Y =e_{9J}$. $P = 1/5, \hat{c} = 6/5$. }\\
\multicolumn{2}{||c||}{Relations:  $X^4 = -5Y^4, X^7 = Y^5 = 0 $, $\dim = 16$.  }\\
\hline
 	\begin{tabular}{c}
 	{3 pt correlators}
 	\\ \hline
	\begin{tabular}{cc}
$\langle \one, \one, X^2Y^4\rangle= 1$&
$\langle \one, X, XY^4\rangle= 1$\\ 
$\langle \one, Y, X^2Y^3\rangle= 1$&
$\langle \one, X^2, Y^4\rangle= 1$\\ 
$\langle \one, XY, XY^3\rangle= 1$&
$\langle \one, Y^2, X^2Y^2\rangle= 1$\\ 
$\langle \one, X^2Y, Y^3\rangle= 1$&
$\langle \one, XY^2, XY^2\rangle= 1$\\ 
$\langle \one, X^3,X^3\rangle= -5$&
$\langle X, X, Y^4\rangle= 1$\\
$\langle X, Y, XY^3\rangle= 1$&
$\langle X, X^2, X^3\rangle= -5$\\ 
$\langle X, XY, Y^3\rangle= 1$& 
$\langle X, Y^2, XY^2\rangle= 1$\\ 
$\langle Y, Y, X^2Y^2\rangle= 1$&
$\langle Y, X^2, Y^3\rangle= 1$\\
$\langle Y, XY, XY^2\rangle= 1$&
$\langle Y, Y^2, X^2Y\rangle= 1$\\
$\langle X^2, Y^2, Y\rangle= 1$&
$\langle X^2, X^2, X^2\rangle_\ast= -5$\\
$\langle XY, XY, Y^2\rangle= 1$\\
\hline
\end{tabular}\\
\end{tabular}&
\begin{tabular}{c}
4 pt correlators
	 	\\ \hline
		$\langle X,X,X^2Y,X^2Y^4 \rangle = 1/4$\\
		$\langle X,X,X^2Y^2,X^2Y^3 \rangle = 1/4$\\
		$\langle X,Y,X^3,X^2Y^4 \rangle_\ast = 1/4$\\
		$\langle X,Y,Y^4,XY^4 \rangle= -1/20$\\
		$\langle Y,Y,Y^3,X^2Y^4 \rangle =3/20$\\
		$\langle Y,Y,XY^3,XY^4 \rangle =1/10$\\
		$\langle Y,Y,Y^4,X^2Y^3 \rangle =1/20$
\end{tabular}\\
\hline\hline
\end{tabular}

\begin{tabular}{||c|c||}
\hline\hline
\multicolumn{2}{||c||}{$W_{17}^T=x^4y+y^5, \quad G^{max} = \langle \gamma = (1/20,4/5)\rangle$} \\\hline\hline
\multicolumn{2}{||c||}{$k \leq 4$, $X = e_{3\gamma}, Y =e_{9\gamma}$. $P = 1/4, \hat{c} = 6/5$. }\\
\multicolumn{2}{||c||}{Relations:  $X^5 = -4Y^3, X^8 = Y^4 = 0 $, $\dim = 17$.  }\\
\hline
 	\begin{tabular}{c}
 	{3 pt correlators}
 	\\ \hline
	\begin{tabular}{cc}
$\langle \one, \one, X^3Y^3 \rangle= 1$&
$\langle \one, X, X^2Y^3\rangle= 1$\\ 
$\langle \one, Y, X^3Y^2\rangle= 1$&
$\langle \one, X^2, XY^3\rangle= 1$\\ 
$\langle \one, XY, X^2Y^2\rangle= 1$&
$\langle \one, X^3, Y^3\rangle= 1$\\ 
$\langle \one, Y^2, X^3Y\rangle= 1$&
$\langle \one, X^2Y, XY^2\rangle= 1$\\ 
$\langle \one, X^4, X^4\rangle= -4$& 
$\langle X, X, XY^3\rangle= 1$\\
$\langle X, Y, X^2Y^2\rangle= 1$& 
$\langle X, X^2, Y^3\rangle= 1$\\
$\langle X, XY, XY^2\rangle= 1$& 
$\langle X, X^3, X^4\rangle= -4$\\ 
$\langle X, Y^2, X^2Y\rangle= 1$& 
$\langle Y, Y, X^3Y\rangle= 1$\\
$\langle Y, X^2, XY^2\rangle= 1$&
$\langle Y, XY, X^2Y\rangle= 1$\\
$\langle Y, X^3, Y^2\rangle= 1$&
$\langle X^2, XY, Y^2\rangle= 1$\\
$\langle X^2, X^2, X^4\rangle= -4$&
$\langle X^2, X^3, X^3\rangle_\ast= -4$\\
$\langle XY, XY, XY\rangle= 1$\\
\end{tabular}\\
\end{tabular}&
\begin{tabular}{c}
4 pt correlators
	 	\\ \hline
		$\langle X,X,X^3Y,X^3Y^3 \rangle = 1/5$\\
		$\langle X,X,X^3Y^2,X^3Y^2 \rangle = 1/5$\\
		$\langle X,Y,X^4,X^3Y^3 \rangle_\ast = 1/5$\\
		$\langle X,Y,Y^3,X^2Y^3 \rangle= 1/20$\\
		$\langle X,Y,XY^3,XY^3 \rangle = -1/20$\\
		$\langle Y,Y,Y^2,X^3Y^3 \rangle =1/5$\\
		$\langle Y,Y,XY^2,X^2Y^3 \rangle =3/20$\\
		$\langle Y,Y,Y^3,X^3Y^2 \rangle =1/20$\\
		$\langle Y,Y,X^2Y^2,XY^3 \rangle =1/20$\\
\end{tabular}\\
\hline\hline
\end{tabular}

\begin{tabular}{||c|c||}
\hline\hline
\multicolumn{2}{||c||}{$W_{17}^T=x^4y+y^5, \quad G= \langle \gamma = (1/10,3/5)\rangle$} \\\hline\hline
\multicolumn{2}{||c||}{$k \leq 7 $, $X = e_{\gamma}, Y =e_{4\gamma}, Z = e_{7\gamma}, W = xy^2e_0$. $P = 3/5, \hat{c} = 6/5$. }\\
\multicolumn{2}{||c||}{Relations:  $XZ = Y^2, X^3 = -4YZ, XYZ = 20W^2, X^5 = Y^5 =Z^2 = W^3 =0 $, $\dim = 10$.  }\\
\hline
 	\begin{tabular}{c}
 	{3 pt correlators}
 	\\ \hline
	\begin{tabular}{cc}
$\langle \one, \one, XYZ\rangle= 1$&
$\langle \one, X, YZ\rangle= 1$\\ 
$\langle \one, Y, XZ\rangle= 1$&
$\langle \one, Z, XY\rangle= 1$\\ 
$\langle \one, W, W\rangle= 1/20$&
$\langle \one, X^2, X^2\rangle= -4$\\ 
$\langle X, X, X^2\rangle= -4$&
$\langle X, Y, Z\rangle= 1$\\ 
$\langle Y, Y, Y\rangle= 1$\\\hline
\end{tabular}\\
		5 pt correlators
	 	\\ \hline
		$\langle X,X,Z,YZ,XYZ \rangle = -1/25$\\
		$\langle X,Y,Y,YZ,XYZ \rangle = 0$\\
		$\langle X,Y,Z,XZ,XYZ \rangle = 1/50$\\
		$\langle X,Z,Z,XY,XYZ \rangle= 0$\\
		$\langle X,Z,W,W,XYZ \rangle_\ast = 3/1000$\\
		$\langle X,W,W,XZ,YZ \rangle_\ast =1/1000$\\
		$\langle Y,Y,Y,XZ,XYZ \rangle =1/25$\\
		$\langle Y,Y,Z,XY,XYZ \rangle =1/25$\\
		$\langle Y,Y,W,W,XYZ \rangle_\ast =-1/500$\\
		$\langle Y,Z,Z,YZ,YZ \rangle =1/50$\\
		$\langle Y,Z,Z,X^2,XYZ \rangle_\ast =0$\\
		$\langle Y,W,W,XY,YZ\rangle_\ast =-1/1000$\\
		$\langle Y,W,W,XZ,XZ \rangle_\ast =-1/500$\\
		$\langle Z,Z,Z,Z,XYZ \rangle =3/25$\\
		$\langle Z,Z,Z,XZ,YZ \rangle =2/25$\\
		$\langle Z,W,W,X^2,XZ \rangle_\ast =1/250$\\
		$\langle Z,W,W,XY,XZ \rangle_\ast =1/1000$\\
		$\langle W,W,W,W,XYZ \rangle_\ast =1/20000$\\\hline
		7 pt correlators
	 	\\ \hline
		$\langle Y,W,W,W,W,XYZ,XYZ \rangle_\ast = -3/500000$\\
		$\langle Z,Z,W,W,W,XYZ,XYZ \rangle_\ast = 0$\\
\end{tabular}&
\begin{tabular}{c}
4 pt correlators
	 	\\ \hline
		$\langle X,X,Y,XYZ \rangle = 1/5$\\
		$\langle X,X,XY,YZ \rangle = 1/5$\\
		$\langle X,X,XZ,XZ \rangle = 2/5$\\
		$\langle X,Y,XY,XZ \rangle= 1/5$\\
		$\langle X,Y,X^2,YZ \rangle_\ast = 2/5$\\
		$\langle X,Z,Z,YZ \rangle =1/5$\\
		$\langle X,Z,XY,XY \rangle =0$\\
		$\langle X,Z,X^2,XZ \rangle_\ast =2/5$\\
		$\langle X,W,W,XY \rangle_\ast =-1/100$\\
		$\langle Y,Y,Z,YZ \rangle =1/10$\\
		$\langle Y,Y,XY,XY \rangle =1/5$\\
		$\langle Y,Y,X^2,XZ \rangle_\ast =0$\\
		$\langle Y,Z,Z,XZ \rangle =1/5$\\
		$\langle Y,Z,X^2,XY \rangle_\ast =0$\\
		$\langle Y,W,W,X^2 \rangle_\ast =-1/100$\\
		$\langle Z,Z,Z,XY \rangle =1/10$\\
		$\langle Z,Z,W,W \rangle_\ast =-1/100$\\
		$\langle Z,Z,X^2,X^2 \rangle_\ast =4/5$\\\hline
		6 pt correlators
	 	\\ \hline
		$\langle X,X,W,W,XYZ,XYZ \rangle_\ast = -1/1250$\\
		$\langle X,Z,Z,Z,XYZ,XYZ \rangle = 0$\\
		$\langle Y,Y,Z,Z,XYZ,XYZ \rangle =2/125$\\
		$\langle Y,Z,W,W,YZ,XYZ \rangle_\ast =1/10000$\\
		$\langle Z,Z,W,W,XZ, XYZ \rangle_\ast =-3/5000$\\
		$\langle W,W,W,W,X^2,XYZ \rangle_\ast =-9/100000$\\
		$\langle W,W,W,W,YZ ,YZ\rangle_\ast =3/100000$\\
\end{tabular}\\
\hline\hline
\end{tabular}

\begin{tabular}{||c|c||}
\hline\hline
\multicolumn{2}{||c||}{$Q_{17}=x^3+xy^5+yz^2, \quad G^{max} = \langle J = (1/3,2/15,13/30)\rangle$} \\\hline\hline
\multicolumn{2}{||c||}{$k \leq 5 $, $X = e_{10J}, Y =e_{8J}$. $P = 3/10, \hat{c} = 6/5$. }\\
\multicolumn{2}{||c||}{Relations:  $5X^9 = 2Y^3 $ , $X^{10} = Y^4 = 0$, $\dim =21$. }\\
\hline
 	\begin{tabular}{c}
 	{3 pt correlators}
 	\\ \hline
	\begin{tabular}{cc}
$\langle \one, \one, X^9Y\rangle= -2$&
$\langle \one, X, X^8Y\rangle= -2$\\
$\langle \one, X^2, X^7Y\rangle= -2$&
$\langle \one, Y, X^9\rangle= -2$\\
$\langle \one, X^3, X^6Y\rangle= -2$&
$\langle \one, XY, X^8\rangle= -2$\\
$\langle \one, X^4, X^5Y\rangle= -2$&
$\langle \one, X^5, X^4Y\rangle= -2$\\
$\langle \one, X^2Y, X^7\rangle= -2$&
$\langle \one, X^3Y, X^6\rangle= -2$\\
$\langle \one, Y^2, Y^2\rangle= -5$&
$\langle X, X, X^7Y\rangle= -2$\\
$\langle X, X^2, X^6Y\rangle= -2$&
$\langle X, Y, X^8\rangle= -2$\\
$\langle X, X^3, X^5Y\rangle= -2$&
$\langle X, XY, X^7\rangle= -2$\\
$\langle X, X^4, X^4Y\rangle_\ast= -2$&
$\langle X, X^5, X^3Y\rangle= -2$\\
$\langle X, X^2Y, X^6\rangle= -2$&
$\langle X^2, X^2, X^5Y\rangle= -2$\\
$\langle X^2, Y, X^7\rangle= -2$&
$\langle X^2, X^3, X^4Y\rangle_\ast= -2$\\
$\langle X^2, XY, X^6\rangle= -2$&
$\langle X^2, X^4, X^3Y\rangle_\ast= -2$\\
$\langle X^2, X^2Y, X^5\rangle= -2$&
$\langle Y, Y, Y^2\rangle_\ast= -5$\\
$\langle Y, X^3, X^6\rangle= -2$&
$\langle Y, X^4, X^5\rangle= -2$\\
$\langle X^3, X^3 X^3Y\rangle_\ast= -2$&
$\langle X^3, XY X^5\rangle_\ast= -2$\\
$\langle X^3, X^4 X^2Y\rangle= -2$&
$\langle XY, X^4 X^4\rangle_\ast= -2$\\

\end{tabular}
		\end{tabular}&
\begin{tabular}{c}
4 pt correlators
	 	\\ \hline
		$\langle X,X,X^8,X^9Y \rangle = 8/15$\\
		$\langle X,X,X^9,X^8Y \rangle = 4/15$\\
		$\langle X,Y,Y^2,X^9Y \rangle_\ast = 2/3$\\
		$\langle X,Y,X^9,X^9 \rangle = -4/15$\\
		$\langle Y,Y,XY,X^9Y \rangle =-2/3$\\
		$\langle Y,Y,X^2Y,X^8Y \rangle =-2/3$\\
		$\langle Y,Y,X^3Y,X^7Y \rangle =-2/3$\\
		$\langle Y,Y,X^4Y,X^6Y \rangle =-2/3$\\
		$\langle Y,Y,X^5Y,X^5Y \rangle =-2/3$\\
		5 pt correlators
	 	\\ \hline
		None
\end{tabular}\\
\hline\hline
\end{tabular}

\begin{tabular}{||c|c||}
\hline\hline
\multicolumn{2}{||c||}{$Q_{17}^T=x^3y+y^5z+z^2, \quad G^{max} = \langle \gamma = (1/30,9/10,1/2)\rangle$} \\\hline\hline
\multicolumn{2}{||c||}{$k \leq 5 $, $X = e_{7\gamma}, Y =e_{19\gamma}, Z = y^4e_{10\gamma}$. $P = 1/3, \hat{c} = 6/5$. }\\
\multicolumn{2}{||c||}{Relations:  $X^5 = -3Y^2, aX^4Y = Z^2, X^{10} = Y^3 = Z^3 = 0 $, $a \neq 0$,  $\dim = 17$.  }\\
\hline
 	\begin{tabular}{c}
 	{3 pt correlators}
 	\\ \hline
	\begin{tabular}{cc}
$\langle \one, \one, X^4Y^2\rangle= 1$&
$\langle \one, X, X^3Y^2\rangle= 1$\\ 
$\langle \one, X^2, X^2Y^2\rangle= 1$&
$\langle \one, Y, X^4Y\rangle= 1$\\ 
$\langle \one, X^3, XY^2\rangle= 1$&
$\langle \one, XY, X^3Y\rangle= 1$\\
$\langle \one, X^4, Y^2\rangle= 1$&
$\langle \one, X^2Y, X^2Y\rangle= 1$\\ 
$\langle \one, Z, YZ\rangle= a$&
$\langle X, X, X^2Y^2\rangle= 1$\\ 
$\langle X, X^2, XY^2\rangle= 1$&
$\langle X, Y, X^3Y\rangle= 1$\\ 
$\langle X, X^3, Y^2\rangle= 1$&
$\langle X, XY, X^2Y\rangle= 1$\\
$\langle X, X^4, X^4\rangle_\ast= -3$&
$\langle X^2, X^2, Y^2\rangle= 1$\\ 
$\langle X^2, Y, X^2Y\rangle= 1$&
$\langle X^2, X^3, X^4\rangle_\ast= -3$\\ 
$\langle X^2, XY, XY\rangle= 1$&
$\langle Y, Y, X^4\rangle= 1$\\ 
$\langle Y, X^3, XY\rangle= 1$&
$\langle Y, Z, Z\rangle_\ast= a$\\ 
$\langle X^3, X^3, X^3\rangle_\ast= -3$\\
\end{tabular}\\\\\\\\\hline
		5 pt correlators
	 	\\ \hline
		$\langle X,Y,Y,X^4Y^2,X^4Y^2 \rangle = -1/150$\\
		$\langle Y,Z,Z,XY^2,X^4Y^2 \rangle = -a/150$\\
		$\langle Y, Z,Z,X^2Y^2,X^3Y^2 \rangle = -a/150$\\
\end{tabular}&
\begin{tabular}{c}
4 pt correlators
	 	\\ \hline
		$\langle X,X,X^3Y,X^4Y^2 \rangle = 1/10$\\
		$\langle X,X,X^4Y,X^3Y^2 \rangle = 1/10$\\
		$\langle X,Y,Y^2,X^3Y^2 \rangle = -1/15$\\
		$\langle X,Y,XY^2,X^2Y^2 \rangle = -1/15$\\
		$\langle X,Y,X^4Y,X^4Y \rangle = 0$\\
		$\langle X,Y,X^4,X^4Y^2 \rangle_\ast =1/10$\\
		$\langle X,Z,Z,X^4Y^2 \rangle_\ast =-a/10 $\\
		$\langle X,Z,YZ,X^4Y \rangle_\ast = -a/10$\\
		$\langle X^2,Z,Z,X^3Y^2 \rangle_\ast = -a/10$\\
		$\langle Y,Y,Y,X^4Y^2 \rangle =3/10$\\
		$\langle Y,Y,XY,X^3Y^2 \rangle =7/30$\\
		$\langle Y,Y,X^2Y,X^2Y^2 \rangle =1/6$\\
		$\langle Y,Y,Y^2,X^4Y \rangle =1/30$\\
		$\langle Y,Y,X^3Y,XY^2 \rangle =1/10$\\
		$\langle Y,Y,YZ,YZ \rangle_\ast =a/3$\\
		$\langle Y,Z,Y^2,YZ \rangle_\ast =a/30$\\
		$\langle X^3,Y,Y^2,X^4Y \rangle_\ast =-a/10$\\
		$\langle Z,Z,XY,X^4Y \rangle_\ast =-a/10$\\
		$\langle Z,Z,X^4,XY^2 \rangle_\ast =-a/10$\\
		$\langle Z,Z,X^2Y,X^3Y \rangle_\ast =-a/10$\\
		$\langle Z,Z,Y^2,Y^2 \rangle_\ast =a/30$\\
\end{tabular}\\
\hline\hline
\end{tabular}

\begin{tabular}{||c|c||}
\hline\hline
\multicolumn{2}{||c||}{$S_{16}=x^2y+xz^4+y^2z, \quad G^{max} =  \langle J = (5/17,7/17,3/17)\rangle$} \\\hline\hline
\multicolumn{2}{||c||}{$k \leq 5$, $X = e_{8J}, Y =e_{7J}, Z = e_{6J}$. $P = 7/17, \hat{c} = 21/17$. }\\
\multicolumn{2}{||c||}{Relations:  $-2XZ = Y^2, X^4 = -2YZ, -4X^3Y = Z^2, X^4 = Y^3 = Z^3 = 0$, $\dim =16$.  }\\
\hline
 	\begin{tabular}{l}
 	{3 pt correlators}
 	\\ \hline
$\langle \one, \one, X^3YZ\rangle= 1$\\
$\langle \one, X, X^2YZ\rangle= 1$\\ 
$\langle \one, Y, X^3Z\rangle= 1$\\
$\langle \one, X^2, XYZ\rangle= 1$\\ 
$\langle \one, Z, X^3Y\rangle= 1$\\
$\langle \one, XY, X^2Z\rangle= 1$\\ 
$\langle \one, X^3, YZ\rangle= 1$\\
$\langle \one, XZ, X^2Y\rangle= 1$\\ 
$\langle X, X, XYZ\rangle= 1$\\
$\langle X, Y, X^2Z\rangle= 1$\\ 
$\langle X, X^2, YZ\rangle= 1$\\
$\langle X, Z X^2Y\rangle= 1$\\ 
$\langle X, XY, XZ\rangle= 1$\\
$\langle X, X^3, Y^3\rangle_\ast= -2$\\ 
$\langle Y, X^2, XZ\rangle= 1$\\
$\langle Y, Z, X^3\rangle= 1$\\ 
$\langle Y, Y, X^2Y\rangle_\ast= -2$\\ 
$\langle Y, XY, XY\rangle_\ast= -2$\\ 
$\langle X^2, Z, XY\rangle= 1$\\
$\langle X^2, X^2, X^3\rangle_\ast= -2$\\ 
$\langle Z, Z, Z\rangle_\ast= -4$\\  
\end{tabular}&
\begin{tabular}{c}
4 pt correlators
	 	\\ \hline
		\begin{tabular}{ll}
		\begin{tabular}{l}
		$\langle X,X,X^2Y,X^3YZ \rangle = 3/17$\\
		$\langle X,X,X^3Y,X^2YZ \rangle = 1/17$\\
		$\langle X,X,X^3Z,X^3Z \rangle_\ast = -2/17$\\
		$\langle X,Y,YZ,X^2YZ \rangle = -2/17$\\
		$\langle X,Y,X^3Y,X^3Z \rangle = 1/17$\\
		$\langle X,Y,XYZ,XYZ \rangle = -2/17$\\
		$\langle X,Y,X^3,X^3YZ \rangle_\ast = 3/17$\\
		$\langle X,Z,XZ,X^2YZ \rangle = 1/17$\\
		$\langle X,Z,YZ,X^3Z \rangle = 1/17$\\
		$\langle X,Z,X^2Z,XYZ \rangle = 1/17$\\
		$\langle X,Z,X^3Y,X^3Y \rangle = -2/17$\\
		$\langle X,Z,Z,X^3YZ \rangle_\ast = 5/17$\\
		$\langle Y,Y,Z,X^3YZ \rangle_\ast =7/17$\\
		$\langle Y,Y,XZ,X^2YZ \rangle_\ast =5/17$\\
		\end{tabular}&
		\begin{tabular}{l}
		$\langle Y,Y,YZ,X^3Z \rangle_\ast =1/17$\\
		$\langle Y,Y,X^2Z,XYZ \rangle_\ast =3/17$\\
		$\langle Y,Y,X^3Y,X^3Y \rangle_\ast =-2/17$\\
		$\langle Y,Z,XZ,X^4Y \rangle =-4/17$\\
		$\langle Y,Z,YZ,X^4Y \rangle =1/17$\\
		$\langle Y,Z,X^2Z,X^4Y \rangle =-4/17$\\
		$\langle Y,Z,XY,X^4Y \rangle_\ast =5/17$\\
		$\langle Y,Z,X^2Y,X^4Y \rangle_\ast =3/17$\\
		$\langle X^2,Z,Z,X^2YZ \rangle_\ast =6/17$\\
		$\langle Z,Z,XY,X^3Z \rangle_\ast =1/17$\\
		$\langle Z,Z,X^3,XYZ \rangle_\ast =7/17$\\
		$\langle Z,Z,XZ,X^3Y \rangle_\ast =3/17$\\
		$\langle Z,Z,X^2Y,X^2Z \rangle_\ast =2/17$\\
		$\langle Z,Z,YZ,YZ \rangle_\ast =-4/17$\\
		\end{tabular}
		\end{tabular}
		\\\hline
		5 pt correlators
	 	\\ \hline
		$\langle X,X,Z,X^3YZ,X^3YZ \rangle_\ast =-4/289$\\
		$\langle X,Y,Y,X^3YZ,X^3YZ \rangle_\ast =8/289$\\
		$\langle Y,Z,Z,XYZ,X^3YZ \rangle_\ast =-4/289$\\
		$\langle Y,Z,Z,X^2YZ,X^2YZ \rangle_\ast =-8/289$\\
		$\langle Z,Z,Z,X^2Z,X^3YZ \rangle_\ast =20/289$\\
		$\langle Z,Z,Z,X^3Z,X^2YZ \rangle_\ast =12/289$\\
\end{tabular}\\
\hline\hline
\end{tabular}

\begin{tabular}{||c|c||}
\hline\hline
\multicolumn{2}{||c||}{$S_{17}=x^2y+y^2z+z^6, \quad G^{max} = \langle J = (7/24,5/12,1/6)\rangle$} \\\hline\hline
\multicolumn{2}{||c||}{$k \leq 5 $, $X = e_{8J}, Y =e_{7J}$. $P = 1/4, \hat{c} = 5/4$. }\\
\multicolumn{2}{||c||}{Relations:  $X^6 =Y^3, X^{11} = Y^4 = 0 $, $\dim = 21$.  }\\
\hline
 	\begin{tabular}{cc}
 	\multicolumn{2}{c}{3 pt correlators}
 	\\ \hline
$\langle \one, \one, X^{10}\rangle= -2$&
$\langle \one, X, X^9\rangle= -2$\\ 
$\langle \one, Y, X^4Y^2\rangle= -2$&
$\langle \one, X^2, X^8\rangle= -2$\\
$\langle \one, XY, X^3Y^2\rangle= -2$& 
$\langle \one, X^3, X^7\rangle= -2$\\ 
$\langle \one, X^2Y, X^2Y^2\rangle= -2$&
$\langle \one, X^4, X^6\rangle= -2$\\
$\langle \one, Y^2, X^4Y\rangle= -2$& 
$\langle \one, X^3Y, XY^2\rangle= -2$\\
$\langle \one, X^5, X^5\rangle= -2$&
$\langle X, X, X^8\rangle= -2$\\
$\langle X, Y, X^3Y\rangle= -2$& 
$\langle X, X^2, X^7\rangle= -2$\\
$\langle X, XY, X^2Y^2\rangle= -2$&
$\langle X, X^3, X^6\rangle= -2$\\ 
$\langle X, X^2Y, XY^2\rangle= -2$& 
$\langle X, Y^2, X^3Y\rangle= -2$\\
$\langle X, X^4, X^5 \rangle= -2$& 
$\langle Y, Y, X^4Y\rangle_\ast= -2$\\ 
$\langle Y, X^2, X^2Y^2\rangle= -2$&
$\langle Y, XY, X^3Y\rangle_\ast= -2$\\ 
$\langle Y, X^3, XY^2\rangle= -2$&
$\langle Y, X^2Y, X^2Y\rangle_\ast= -2$\\ 
$\langle Y, X^4, Y^2\rangle= -2$&
$\langle X^2, X^2, X^6\rangle= -2$\\ 
$\langle X^2, XY, XY^2\rangle= -2$&
$\langle X^2, X^3, X^5\rangle= -2$\\ 
$\langle X^2, X^2Y, Y^2\rangle= -2$& 
$\langle X^2, X^4, X^4\rangle_\ast= -2$\\ 
$\langle XY, X^3, Y^2\rangle= -2$&
$\langle XY, XY, X^2Y\rangle_\ast= -2$\\
$\langle X^3, X^3, X^4\rangle_\ast= -2$&
\end{tabular}&
\begin{tabular}{c}
4 pt correlators
	 	\\ \hline
		$\langle X,X,X^4Y,X^{10} \rangle = -1/3$\\
		$\langle X,X,X^3Y^2,X^3Y^2 \rangle_\ast = 1/(3k)$\\
		$\langle X,Y,X^5,X^{10} \rangle_\ast = -1/3$\\
		$\langle X,Y,X^6,X^9 \rangle = -1/3$\\
		$\langle X,Y,X^7,X^8 \rangle = -1/3$\\
		$\langle Y,Y,Y^2,X^{10} \rangle_\ast =5/3$\\
		$\langle Y,Y,XY^2,X^9 \rangle_\ast =-1/3$\\
		$\langle Y,Y,X^6,X^3Y^2 \rangle_\ast =1/3$\\
		$\langle Y,Y,X^2Y^2,X^8 \rangle_\ast =1$\\
		$\langle Y,Y,Y^2,X^4Y \rangle_\ast =1/18$\\
		$\langle Y,Y,X^7,X^3Y^2 \rangle_\ast =-1/3$\\\\\\\hline
		5 pt correlators
	 	\\ \hline
		None\\\\
\end{tabular}\\
\hline\hline
\end{tabular}

}

}

\subsection{ FJRW Theories which we still cannot compute}\label{sec_nogo}
Unfortunately, there are some A-models which we are still unable to compute.  In these examples, there are not enough relations between concave correlators (whose values we can compute) and other correlators (whose values we can only find using the Reconstruction Lemma).
{\small 

\begin{center}
\begin{tabular}{| c | l | c |}
\hline
$W$ & $G$ & dimension  \\\hline
$P_8 = x^3+y^3+z^3$ &  $\langle (1/3,0,0),(0,1/3,1/3)\rangle$ &8 \\
$U_{12} = x^3+y^3+z^4$ &  $\langle J \rangle$ &12 \\
$S_{1,0}^T = x^2+xz^2 + y^5z$ &  $\langle J \rangle_{max}$ &14 \\
$Z_{17} = x^3y+y^8$ &  $\langle J \rangle_{max}$ &22 \\
$Z_{18} = x^3y + xy^6$ &  $\langle J \rangle_{max}$ &18 \\
$W_{17}^T = x^4y + y^5$ &  $\langle J \rangle $ &8 \\
$Q_{17}^T=x^3y+y^5z+z^2$ &  $\langle J \rangle $ &7 \\
$S_{17}^T=x^2+y^6z+z^2x$ &  $ G^{max}$ &17 \\
$S_{17}^T=x^2+y^6z+z^2x$ &  $\langle J \rangle $ &7 \\
\hline
\end{tabular} 
\end{center}

}

\begin{remark}\label{rem:code}
The code which we used to make our computations is available by email request from the author.  We made the computations in SAGE \cite{sage}.  The SAGE computations depend on code written by Drew Johnson \cite{Jo}   for computing intersection numbers of classes on $\overline{M}_{g,n}$.  
\end{remark}

\bibliography{references}{}

\begin{thebibliography}{10}

\bibitem{FJR}
H.~{Fan}, T.~J. {Jarvis}, and Y.~{Ruan}.
\newblock {The Witten equation, mirror symmetry and quantum singularity
  theory}.
\newblock {\em ArXiv e-prints}, December 2007.

\bibitem{FJR1}
H.~{Fan}, T.~J. {Jarvis}, and Y.~{Ruan}.
\newblock Geometry and analysis of spin equations.
\newblock {\em Communications on Pure and Applied Mathematics}, 61(6):745--788,
  2008.

\bibitem{FJR-ip-WEVFC}
H.~{Fan}, T.~J. {Jarvis}, and Y.~{Ruan}.
\newblock {The Witten equation and its virtual fundamental cycle}.
\newblock {\em ArXiv e-prints}, December 2007.

\bibitem{Kr}
M.~{Krawitz}.
\newblock {FJRW rings and Landau-Ginzburg Mirror Symmetry}.
\newblock {\em ArXiv e-prints}, June 2009.

\bibitem{FJJS}
A.~{Francis}, T.~{Jarvis}, D.~{Johnson}, and R.~{Suggs}.
\newblock {Landau-Ginzburg Mirror Symmetry for Orbifolded Frobenius Algebras}.
\newblock {\em ArXiv e-prints}, November 2011.

\bibitem{shenkrawitz}
M.~{Krawitz} and Y.~{Shen}.
\newblock {Landau-Ginzburg/Calabi-Yau Correspondence of all Genera for Elliptic
  Orbifold \$$\backslash$mathbb$\{$p$\}$\^{}1\$}.
\newblock {\em ArXiv e-prints}, June 2011.

\bibitem{Guere}
J.~{Gu{\'e}r{\'e}}.
\newblock {A Landau--Ginzburg mirror theorem without concavity}.
\newblock {\em ArXiv e-prints}, July 2013.

\bibitem{KPABR}
M.~{Krawitz}, N.~{Priddis}, P.~{Acosta}, N.~{Bergin}, and H.~{Rathnakumara}.
\newblock {FJRW-Rings and Mirror Symmetry}.
\newblock {\em Communications in Mathematical Physics}, 296:145--174, May 2010.

\bibitem{Ar}
V.~I. Arnold, V.~V. Goryunov, O.~V. Lyashko, and V.~A. Vasilliev.
\newblock Singularity theory. {I}.
\newblock In {\em Dynamical Systems. {VI}}, volume~6 of {\em Encycl. Math.
  Sci.}, pages iv+245. Springer-Verlag, New York, 1993.

\bibitem{Chi}
A.~{Chiodo}.
\newblock {Towards an enumerative geometry of the moduli space of twisted
  curves and r-th roots}.
\newblock {\em ArXiv Mathematics e-prints}, July 2006.

\bibitem{FaMgnF}
C.~{Faber}.
\newblock {Algorithms for computing intersection numbers on moduli spaces of
  curves, with an application to the class of the locus of Jacobians}.
\newblock In {\em eprint arXiv:alg-geom/9706006}, page 6006, June 1997.

\bibitem{ACComb}
E.~{Arbarello} and M.~{Cornalba}.
\newblock {Combinatorial and algebro-geometric cohomology classes on the moduli
  spaces of curves}.
\newblock In {\em eprint arXiv:alg-geom/9406008}, page 6008, June 1994.

\bibitem{Jo}
Drew Johnson.
\newblock {\em Top Intersections on Mbarg,n}, August 2011.
\newblock { https://bitbucket.org/drew\textunderscore
  j/top-intersections-on-mbar\textunderscore g-n}.

\bibitem{LLSS}
C.~{Li}, S.~{Li}, K.~{Saito}, and Y.~{Shen}.
\newblock {Mirror symmetry for exceptional unimodular singularities}.
\newblock {\em ArXiv e-prints}, May 2014.

\bibitem{Saito}
K~Saito.
\newblock {Primitive forms for a universal unfolding of a function with an
  isolated critical point.}
\newblock {\em J. Fac. Sci. Univ. Tokyo Sect. IA 28}, 1982.

\bibitem{EJK}
D.~{Edidin}, T.~J. {Jarvis}, and T.~{Kimura}.
\newblock {Chern Classes and Compatible Power Operations in Inertial K-theory}.
\newblock {\em ArXiv e-prints}, September 2012.

\bibitem{D4}
H.~{Fan}, A.~{Francis}, T.~J. {Jarvis}, E.~{Merrell}, and Y.~{Ruan}.
\newblock {Witten's D\_4 Integrable Hierarchies Conjecture}.
\newblock {\em ArXiv e-prints}, August 2010.

\bibitem{sage}
W.\thinspace{}A. Stein et~al.
\newblock {\em {S}age {M}athematics {S}oftware ({V}ersion 6.1)}.
\newblock The Sage Development Team, 2014.
\newblock {\tt http://www.sagemath.org}.

\end{thebibliography}
\bibliographystyle{unsrt}

\end{document}